\newenvironment{enumerate*}%
  {\begin{enumerate}[(i)]%
    \setlength{\itemsep}{10pt}%
    \setlength{\parskip}{0pt}}%
  {\end{enumerate}}
\newtheorem{theorem}{Theorem}[section]
\newtheorem{proposition}[theorem]{Proposition}
\newtheorem{corollary}[theorem]{Corollary}
\newtheorem{lemma}[theorem]{Lemma}
\theoremstyle{definition}
\newtheorem{remark}[theorem]{Remark}
\DeclareMathOperator{\sgn}{sgn}
\DeclareMathOperator{\FS}{\mathsf{FS}}
\DeclareMathOperator{\Star}{\mathsf{Star}}
\DeclareMathOperator{\Path}{\mathsf{Path}}
\DeclareMathOperator{\Cycle}{\mathsf{Cycle}}
\DeclareMathOperator{\Spider}{\mathsf{Spider}}
\DeclareMathOperator{\Dand}{\mathsf{Dand}}
\newcommand{\Fruit}{\mathsf{Cycle}^{\perp}}
\newcommand{\dfn}[1]{\textcolor{blue}{\emph{#1}}}
\begin{document}

\title[]{Connectedness and Cycle Spaces of Friends-and-Strangers Graphs}
\subjclass[2010]{}

\author[Colin Defant]{Colin Defant}
\address[]{Department of Mathematics, Massachusetts Institute of Technology, Cambridge, MA 02139, USA}
\email{colindefant@gmail.com}

\author[David Dong]{David Dong}
\address[]{Eastside Preparatory School, Kirkland, WA 98004, USA}
\email{etsptq@gmail.com}

\author[Alan Lee]{Alan Lee}
\address[]{Henry M. Gunn High School, Palo Alto, CA 94306, USA}
\email{alandongjinlee@gmail.com}

\author[Michelle Wei]{Michelle Wei}
\address[]{The Harker School, San Jose, CA 95124, USA}
\email{michelle.wei89@gmail.com}

\maketitle

\begin{abstract}
If $X=(V(X),E(X))$ and $Y=(V(Y),E(Y))$ are $n$-vertex graphs, then their \emph{friends-and-strangers graph} $\mathsf{FS}(X,Y)$ is the graph whose vertices are the bijections from $V(X)$ to $V(Y)$ in which two bijections $\sigma$ and $\sigma'$ are adjacent if and only if there is an edge $\{a,b\}\in E(X)$ such that $\{\sigma(a),\sigma(b)\}\in E(Y)$ and $\sigma'=\sigma\circ (a\,\,b)$, where $(a\,\,b)$ is the permutation of $V(X)$ that swaps $a$ and $b$. We prove general theorems that provide necessary and/or sufficient conditions for $\mathsf{FS}(X,Y)$ to be connected. As a corollary, we obtain a complete characterization of the graphs $Y$ such that $\mathsf{FS}(\mathsf{Dand}_{k,n},Y)$ is connected, where $\mathsf{Dand}_{k,n}$ is a dandelion graph; this substantially generalizes a theorem of the first author and Kravitz in the case $k=3$. For specific choices of $Y$, we characterize the spider graphs $X$ such that $\mathsf{FS}(X,Y)$ is connected. In a different vein, we study the cycle spaces of friends-and-strangers graphs. Naatz proved that if $X$ is a path graph, then the cycle space of $\mathsf{FS}(X,Y)$ is spanned by $4$-cycles and $6$-cycles; we show that the same statement holds when $X$ is a cycle and $Y$ has domination number at least $3$. When $X$ is a cycle and $Y$ has domination number at least $2$, our proof sheds light on how walks in $\mathsf{FS}(X,Y)$ behave under certain \emph{Coxeter moves}. 
\end{abstract}

\tableofcontents

\section{Introduction}\label{SecIntro}

\emph{Flip graphs} are graphs that encode when combinatorial objects are related by small changes called \emph{flips}. In recent years, these graphs have received a great deal of attention in combinatorics, geometry, and computer science. For example, some of the most well-studied flip graphs are the $1$-skeleta of polytopes such as the permutahedron, the associahedron, and the cyclohedron. In \cite{friends}, the first author and Kravitz introduced a broad family of flip graphs called \emph{friends-and-strangers graphs}. 

Suppose we are given simple graphs $X=(V(X),E(X))$ and $Y=(V(Y),E(Y))$ such that $|V(X)|=|V(Y)|=n$. We imagine that the vertices of $X$ are chairs and that the vertices of $Y$ are people; two people are adjacent in $Y$ if and only if they are friends with each other (otherwise, they are strangers). The \dfn{friends-and-strangers graph} of $X$ and $Y$, denoted $\FS(X,Y)$, is a graph whose vertices are the bijections from $V(X)$ to $V(Y)$; one can think of such a bijection as an arrangement of people sitting in chairs. If we are given such an arrangement, then we allow two people to swap places with each other if they are friends with each other and they are sitting in adjacent chairs. Such a swap is called an \dfn{$(X,Y)$-friendly swap}. The edges of $\FS(X,Y)$ correspond precisely to $(X,Y)$-friendly swaps. More formally, two bijections $\sigma,\sigma':V(X)\to V(Y)$ are adjacent in $\FS(X,Y)$ if and only if there exists an edge $\{a,b\}\in E(X)$ such that $\{\sigma(a),\sigma(b)\}\in E(Y)$, $\sigma(a)=\sigma'(b)$, $\sigma(b)=\sigma'(a)$, and $\sigma(c)=\sigma'(c)$ for all $c\in V(X)\setminus\{a,b\}$. For example, suppose \[X = Y = \begin{array}{l}\includegraphics[height=.5cm]{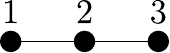}.
\end{array}\]

%\begin{array}{l}
%\begin{asy}
%unitsize(1mm);
%defaultpen(fontsize(10pt));
%pair A,B,C;
%A=(0,0);
%B=(7.5,0);
%C=(15,0);
%filldraw(circle(A,1),black,black);
%filldraw(circle(B,1),black,black);
%filldraw(circle(C,1),black,black);
%draw(A--B);
%draw(B--C);
%label("$1$",A+3*dir(90));
%label("$2$",B+3*dir(90));
%label("$3$",C+3*dir(90));
%\end{asy}
%\end{array}

 Since $V(X)=V(Y)=\{1,2,3\}$, we can represent each bijection $\sigma\colon V(X)\to V(Y)$ as a permutation $\sigma(1)\sigma(2)\sigma(3)$ in one-line notation. Then
\[\FS(X,Y)= \begin{array}{l}\includegraphics[height=1.255cm]{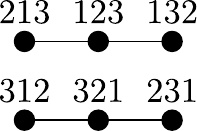}
\end{array}.\]
%\begin{array}{l}
%\begin{asy}
%unitsize(1mm);
%defaultpen(fontsize(10pt));
%pair A,B,C;
%A=(0,0);
%B=(7.5,0);
%C=(15,0);
%filldraw(circle(A,1),black,black);
%filldraw(circle(B,1),black,black);
%filldraw(circle(C,1),black,black);
%draw(A--B);
%draw(B--C);
%label("$213$",A+3*dir(90));
%label("$123$",B+3*dir(90));
%label("$132$",C+3*dir(90));
%pair D,E,F;
%D=(0,-8);
%E=(7.5,-8);
%F=(15,-8);
%filldraw(circle(D,1),black,black);
%filldraw(circle(E,1),black,black);
%filldraw(circle(F,1),black,black);
%draw(D--E);
%draw(E--F);
%label("$312$",D+3*dir(90));
%label("$321$",E+3*dir(90));
%label("$231$",F+3*dir(90));
%\end{asy}
%\end{array}

Friends-and-strangers graphs generalize several previously-studied notions. For example, when $Y=K_n$ is the complete graph with vertex set $[n]:=\{1,\ldots,n\}$ and $X$ is another graph with vertex set $[n]$, the friends-and-strangers graph $\FS(X,Y)$ is the Cayley graph of the symmetric group $\mathfrak{S}_n$ generated by the collection of transpositions corresponding to the edges of $X$. The famous $15$-puzzle is equivalent to analyzing $\FS(\Star_{16},\mathsf{Grid}_{4\times 4})$, where $\Star_{n}$ is the star graph with $n$ vertices and $\mathsf{Grid}_{4\times 4}$ is the $4\times 4$ grid graph. Generalizing the $15$-puzzle, Wilson \cite{wilson} characterized the graphs $Y$ such that $\FS(\Star_{n},Y)$ is connected (see Theorem~\ref{thm:wilson}). Stanley \cite{stanley} studied the connected components of $\FS(\Path_n,\Path_n)$, where $\Path_n$ is the path graph with $n$ vertices.  

It is very natural to ask about the connected components of a friends-and-strangers graph; indeed, if we view the vertices of $\FS(X,Y)$ as arrangements of people in chairs, then two such arrangements are in the same connected component of $\FS(X,Y)$ if and only if one can be obtained from the other via a sequence of $(X,Y)$-friendly swaps. As mentioned above, Wilson studied this problem when $X$ is a star graph. Several papers have continued this line of work when one of $X$ or $Y$ is fixed to be a specific type of graph \cite{friends, Jeong}, when $X$ and $Y$ are Erd\H{o}s--R\'enyi random graphs \cite{AlonDefantKravitz,Wang}, or when $X$ and $Y$ satisfy certain minimum-degree conditions \cite{AlonDefantKravitz, Bangachev}. Jeong recently studied the girths and diameters of friends-and-strangers graphs \cite{JeongStructural}. The first author has also related friends-and-strangers graphs of the form $\FS(\Cycle_n,Y)$, where $\Cycle_n$ is the cycle with $n$ vertices, to a dynamical system called \emph{toric promotion} \cite{ToricPromotion}. 

In \cite{friends}, the first author and Kravitz found sufficient conditions for $\FS(X,Y)$ to be connected under the hypothesis that $X$ has a Hamiltonian path. One of our primary goals in this paper is to prove the following sufficient condition for $\FS(X,Y)$ to be connected; this condition is very general and is quite different from the ones established in \cite{friends}.

\begin{theorem}\label{thm:spider_sufficient}
Let $X$ and $Y$ be connected $n$-vertex graphs such that $X$ has maximum degree $k\geq 2$. If every $k$-vertex induced subgraph of $Y$ is connected and there exists a $k$-vertex induced subgraph $Y_0$ of $Y$ such that $\FS(\Star_{k},Y_0)$ is connected,
then $\FS(X,Y)$ is connected.
\end{theorem}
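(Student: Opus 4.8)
The plan is to use a maximum-degree vertex $v_0$ of $X$ together with the subgraph $Y_0$ as a local ``engine'': once the people of $V(Y_0)$ occupy a star centered at $v_0$ we can mix them arbitrarily, and we then transport people to and from this engine using the high connectivity forced on $Y$. To set up, fix $v_0\in V(X)$ with $\deg_X(v_0)=k$ and write $N_X(v_0)=\{w_1,\dots,w_k\}$; since $\Delta(X)\le n-1$ we have $n\ge k+1$. For any $(k-1)$-subset $W\subseteq N_X(v_0)$ and any $k$-subset $B\subseteq V(Y)$, the induced subgraph of $\FS(X,Y)$ on those bijections that send $\{v_0\}\cup W$ onto $B$ and agree with a fixed bijection off $\{v_0\}\cup W$ is exactly $\FS(X[\{v_0\}\cup W],Y[B])$, which contains $\FS(\Star_k,Y[B])$ as a spanning subgraph. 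Two consequences: (a) if a bijection $\sigma$ satisfies $\sigma(\{v_0\}\cup W)=V(Y_0)$, then every rearrangement of $V(Y_0)$ within the chairs $\{v_0\}\cup W$ lies in the component of $\sigma$, because $\FS(\Star_k,Y_0)$ is connected; (b) if $\sigma(\{v_0\}\cup W)=B'$ for an arbitrary $k$-set $B'$, then, because $Y[B']$ is connected, the person occupying $v_0$ can be changed to any prescribed member of $B'$ without disturbing any chair outside $\{v_0\}\cup W$ (the successive occupants of $v_0$ trace a walk in $Y[B']$). Finally, note that the hypothesis ``every $k$-vertex induced subgraph of $Y$ is connected'' forces $Y$ to be $(n{-}k{+}1)$-connected, since a vertex cut of size at most $n-k$ would survive inside some $k$-vertex induced subgraph; in particular $\delta(Y)\ge n-k+1$ and every vertex outside $V(Y_0)$ has at least two $Y$-neighbors in $V(Y_0)$.

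The crux is a transport claim: \emph{from any bijection one can reach, within its component of $\FS(X,Y)$, a bijection carrying $\{v_0,w_1,\dots,w_{k-1}\}$ onto $V(Y_0)$.} I would establish this by moving the $k$ people of $V(Y_0)$ toward $v_0$ one at a time along paths of $X$ using friendly swaps, invoking (a) or (b) to reconfigure the people near $v_0$ whenever a desired swap along an edge of $X$ is not friendly, so as to bring a $Y$-neighbor of the traveling person into the relevant chair — such a neighbor is available because every $k$-subset of $Y$ induces a connected graph and $Y$ is highly connected. Granting the claim, one finishes as follows. Fix a bijection with $V(Y_0)$ on $\{v_0,w_1,\dots,w_{k-1}\}$; by (a) we may realize every permutation of $V(Y_0)$ on those chairs, including odd ones. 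Feeding the people of $V(Y)\setminus V(Y_0)$ through the engine one at a time — pushing a displaced member of $V(Y_0)$ temporarily onto an $X$-neighbor of $w_1$ outside the engine, rearranging, and pushing it back — and using that $X$ is connected, we can realize an arbitrary permutation of $V(Y)\setminus V(Y_0)$ among the remaining chairs, up to an accompanying permutation of $V(Y_0)$ on the engine which (a) then corrects. Hence every bijection is reachable and $\FS(X,Y)$ is connected.

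The principal obstacle is making the transport step rigorous without circularity: the engine reconfigurations used to unlock swaps must not undo earlier progress, and the routing must terminate. The natural remedy is to build the target bijection in a prescribed order — say by fixing, in turn, the people at the chairs of $X$ in order of decreasing distance from $v_0$, so that the final adjustments take place entirely on the engine — while carefully tracking which $k$-subset of $V(Y)$ currently sits on the engine, since whenever a vertex of $V(Y)\setminus V(Y_0)$ lies on the engine only the weak reconfiguration (b) is available rather than the full mixing (a). The degenerate case $k=2$, where the hypotheses force $Y=K_n$ and $X$ to be a path or a cycle (so that $\FS(X,Y)$ is a connected Cayley graph of $\mathfrak{S}_n$), is handled separately.
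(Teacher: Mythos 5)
Your proposal correctly isolates the key ingredients (Wilson's theorem applied to $Y_0$, and the high connectivity of $Y$ forced by the induced-subgraph hypothesis), but the transport claim --- that from any configuration one can reach a bijection placing $V(Y_0)$ on the star around $v_0$ --- is the entire content of the theorem and is left unproved. The sketch you give (``moving the $k$ people of $V(Y_0)$ toward $v_0$ one at a time \ldots invoking (a) or (b) to reconfigure the people near $v_0$'') does not hold up: a blocked swap along a path far from $v_0$ cannot be unlocked by rearranging the engine, since (a) and (b) only disturb the chairs $\{v_0\}\cup W$; and reconfiguration (b) by itself gives only a cyclic-ish relabeling of the engine inherited from a walk in $Y[B']$, not the free permutation needed to guarantee a desired $Y$-neighbor lands on a prescribed chair. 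The cleanup step also tacitly assumes some $w_i$ has an $X$-neighbor outside $\{v_0\}\cup N_X(v_0)$, which fails when $X = \Star_{k+1}$; that degenerate case needs separate treatment (not just $k=2$). You flag the circularity/termination danger yourself, but the suggested fix (ordering chairs by distance from $v_0$) does not by itself control which $k$-set currently occupies the engine.

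The paper sidesteps global routing entirely. After reducing to a spanning tree of $X$, the argument is an induction on $n$ that deletes one leaf of $X$ at a time. The work is concentrated in Lemma~\ref{thm:spider_sufficient_helper2}, which shows (by its own induction, with Wilson's theorem via Lemma~\ref{thm:spider_sufficient_helper1} as the base case when $X$ is a star) that from any $\sigma$ one can reach some $\sigma'$ with $\sigma'(x)=y$ for any prescribed chair $x$ and person $y$ --- a much weaker ``one pair at a time'' transport statement than yours. Having pinned a leaf $x\notin N[c]$ to a person $y\notin V(Y_0)$, one then invokes the inductive hypothesis on $X\setminus x$ and $Y\setminus y$, whose hypotheses are inherited because $Y_0$ survives the deletion. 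This leaf-deletion structure is precisely what resolves the obstacle you identified: each step permanently fixes one chair--person pair, and the remaining problem is a strictly smaller instance of the theorem, so there is no risk of undoing progress and no need for a full routing analysis.
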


In practice, Wilson's theorem, which we have recorded as Theorem~\ref{thm:wilson} below, makes it easy to check whether or not $\FS(\Star_k,Y_0)$ is connected for a given graph $Y_0$. In fact, Wilson's result will allow us to prove the following, which immediately lets us ignore one of the hypotheses of Theorem~\ref{thm:spider_sufficient} when $n\geq 2k-1$.   

\begin{theorem}\label{thm:Wilsonian}
Let $Y$ be an $n$-vertex graph such that every $k$-vertex induced subgraph of $Y$ is connected. If $n\geq 2k-1$, then there exists a $k$-vertex induced subgraph $Y_0$ of $Y$ such that $\FS(\Star_{k},Y_0)$ is connected.
\end{theorem}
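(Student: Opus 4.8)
The plan is to build the required $k$-vertex induced subgraph $Y_0$ explicitly and then invoke Wilson's theorem (Theorem~\ref{thm:wilson}). I begin with a minimum-degree estimate: if a vertex $v$ of $Y$ had at least $k-1$ non-neighbors, then $v$ together with any $k-1$ of them would induce a $k$-vertex subgraph in which $v$ is isolated, hence disconnected; so every vertex of $Y$ has at most $k-2$ non-neighbors, i.e.\ $\delta(Y)\ge n-k+1$. Since $n\ge 2k-1$, this gives $\delta(Y)\ge k$, and this is the only place the hypothesis $n\ge 2k-1$ is needed.

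The heart of the argument is the claim that $Y[N(v)]$ is connected for every vertex $v$. Suppose not, and let $C$ be a connected component of $Y[N(v)]$ with $\emptyset\ne C\subsetneq N(v)$; then $Y$ has no edge between $C$ and $N(v)\setminus C$. Since $\abs{N(v)}=\deg(v)\ge k$ and both $C$ and $N(v)\setminus C$ are nonempty with $\abs{C}\le\abs{N(v)}-1$, one can choose nonempty $S_1\subseteq C$ and $S_2\subseteq N(v)\setminus C$ with $\abs{S_1}+\abs{S_2}=k$; then $Y[S_1\cup S_2]$ is a disconnected $k$-vertex induced subgraph, contradicting the hypothesis. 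I expect this claim — or really, the idea of looking inside a neighborhood — to be the main conceptual hurdle; once it is in hand, the rest is bookkeeping.

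Now assume $k\ge 3$ and fix a vertex $v$. Because $Y[N(v)]$ is connected on $\abs{N(v)}\ge k$ vertices, deleting non-cut vertices one at a time yields $R\subseteq N(v)$ with $\abs{R}=k-1$ and $Y[R]$ connected. Put $Y_0:=Y[\{v\}\cup R]$, a $k$-vertex induced subgraph in which $v$ is adjacent to every vertex of $R$. Then $Y_0$ has no cut vertex: $Y_0-v=Y[R]$ is connected, and for $u\in R$ the graph $Y_0-u$ is connected since $v$ is still adjacent to all of $R\setminus\{u\}$; as $Y_0$ has $k\ge 3$ vertices, it is $2$-connected. Also $Y[R]$, being connected on $k-1\ge 2$ vertices, contains an edge $\{u_1,u_2\}$, so $\{v,u_1,u_2\}$ induces a triangle in $Y_0$; hence $Y_0$ is non-bipartite.

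It remains to apply Wilson's theorem. For $k\ge 4$, the graph $Y_0$ is $2$-connected, non-bipartite, not the cycle $\Cycle_k$, and not Wilson's exceptional $7$-vertex graph $\theta_0$ — the last two because, e.g., $Y_0$ contains a triangle while they do not — so Theorem~\ref{thm:wilson} shows $\FS(\Star_k,Y_0)$ is connected. When $k=3$, we have $Y_0=K_3$, and $\FS(\Star_3,K_3)$ is connected directly; when $k=2$, the hypothesis forces $Y=K_n$, so $Y_0=K_2$ works. In all cases $\FS(\Star_k,Y_0)$ is connected, as required.
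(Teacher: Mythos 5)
Your proof is correct, and it takes a genuinely different route from the paper's. The paper first invokes Tur\'an's theorem (via the degree bound $\delta(Y)\ge n-k+1$, which gives $>n^2/4$ edges) to locate a triangle $Y^{(3)}$, and then grows a nested chain $Y^{(3)}\subsetneq Y^{(4)}\subsetneq\cdots\subsetneq Y^{(k)}$ of biconnected, triangle-containing induced subgraphs one vertex at a time; the growth step counts edges leaving $Y^{(\ell-1)}$ and reduces to verifying nonnegativity of a quadratic polynomial $p(\ell)$, which is checked at the endpoints $\ell=3$ and $\ell=k$. You instead prove a clean structural fact that the hypothesis forces directly --- $Y[N(v)]$ is connected for every $v$ --- then trim $N(v)$ to a connected induced subgraph $R$ on $k-1$ vertices by peeling non-cut vertices, and take $Y_0=Y[\{v\}\cup R]$ in a single shot. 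Biconnectedness is then immediate from $v$ being adjacent to all of $R$ together with $Y[R]$ being connected, and the triangle comes for free from any edge of $Y[R]$. Your argument buys a more elementary and more transparent proof: it avoids Tur\'an's theorem and the quadratic estimate entirely, replacing the inductive accretion with one explicit construction, while using the hypothesis $n\ge 2k-1$ in exactly the same way (to force $\delta(Y)\ge k$). Both proofs funnel into Wilson's theorem identically at the end. You also make the small cases $k=2,3$ explicit, which the paper leaves implicit.
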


A \dfn{spider} is a connected graph that has a vertex $c$ such that deleting $c$ results in a disjoint union of paths. The vertex $c$ is called the \dfn{center} of the spider, and the paths that result from deleting $c$ are called the \dfn{legs} of the spider (the center is unique if there are at least three legs). The number of vertices in a leg is called its \dfn{length}. We write $\Spider(\lambda_1,\ldots,\lambda_k)$ for the spider with legs of lengths $\lambda_1,\ldots,\lambda_k$; note that the number of vertices in this graph is $\lambda_1+\cdots+\lambda_k+1$. The star graph $\Star_n$ is the spider with $n-1$ legs, all of which have length $1$. 

While Theorem~\ref{thm:spider_sufficient} provides a sufficient condition for the connectedness of $\FS(X,Y)$, the next theorem provides a necessary condition when $X$ is a spider. 

\begin{theorem}\label{thm:spider_necessary}
Let $\lambda_1\geq\cdots\geq\lambda_k$ be positive integers, and let $n=\lambda_1+\cdots+\lambda_k+1$. Let $Y$ be an $n$-vertex graph. If there exists a disconnected induced subgraph of $Y$ with $n-\lambda_1$ vertices, then $\FS(\Spider(\lambda_1,\ldots,\lambda_k),Y)$ is disconnected.  
\end{theorem}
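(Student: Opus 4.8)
The plan is to reduce the statement to a purely combinatorial fact about colorings of the spider $X$. Fix the following notation: let $c$ be the center of $X=\Spider(\lambda_1,\dots,\lambda_k)$ and let $L_1$ be a leg of maximum length, with vertices $u_1,\dots,u_{\lambda_1}$ labeled so that $c\sim u_1\sim\cdots\sim u_{\lambda_1}$. Set $S:=V(X)\setminus\{u_1,\dots,u_{\lambda_1}\}$, so that $|S|=n-\lambda_1$, the induced subgraph $X[S]\cong\Spider(\lambda_2,\dots,\lambda_k)$ is connected, and $\{c,u_1\}$ is the only edge of $X$ with exactly one endpoint in $S$. By hypothesis there is a set $W\subseteq V(Y)$ with $|W|=n-\lambda_1$ such that $Y[W]$ is disconnected; fix a partition $W=W_1\sqcup W_2$ into nonempty sets joined by no edge of $Y$, and put $U:=V(Y)\setminus W$, so $|U|=\lambda_1$.

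For the reduction, I would color every vertex of $Y$ by one of $\{\mathrm r,\mathrm b,\mathrm w\}$ according to whether it belongs to $W_1$, $W_2$, or $U$, and for a bijection $\sigma\colon V(X)\to V(Y)$ let $\pi(\sigma)$ denote the induced coloring of $V(X)$. Since no edge of $Y$ joins $W_1$ to $W_2$, an $(X,Y)$-friendly swap never transposes an $\mathrm r$-colored vertex with a $\mathrm b$-colored vertex; hence whenever $\sigma$ and $\sigma'$ are adjacent in $\FS(X,Y)$, the colorings $\pi(\sigma)$ and $\pi(\sigma')$ either coincide or differ by transposing the colors at two adjacent vertices of $X$ whose colors are not $\{\mathrm r,\mathrm b\}$. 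Consequently $\pi$ maps each connected component of $\FS(X,Y)$ into a single connected component of the coloring graph $\mathcal G$ whose vertices are the colorings of $V(X)$ with $|W_1|$ $\mathrm r$'s, $|W_2|$ $\mathrm b$'s, and $\lambda_1$ $\mathrm w$'s, and whose edges are the color-transpositions just described; and $\pi$ is clearly surjective. Thus it suffices to show that $\mathcal G$ is disconnected.

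Proving that $\mathcal G$ is disconnected is the heart of the matter, and it is essential that $L_1$ has length exactly $\lambda_1=|U|$ — with strictly more white vertices the analogous graph can be connected even for spiders with several legs, so a genuine bottleneck argument is needed. I would fix an ``aligned'' coloring $\chi_0\in\mathcal G$ in which all $\lambda_1$ white vertices lie on $L_1$ (so that every $\mathrm r$- and $\mathrm b$-vertex lies in $S$), take $\chi_1$ to be obtained from $\chi_0$ by transposing the colors of one $\mathrm r$-vertex and one $\mathrm b$-vertex of $S$, and then exhibit an invariant of the components of $\mathcal G$ separating $\chi_0$ from $\chi_1$. The facts to exploit are: (i) a non-white vertex can cross between $L_1$ and $S$ only via a transposition on $\{c,u_1\}$, one at a time; (ii) inside the path $L_1$ two adjacent non-white vertices may be transposed only if they have the same color, so the word of $\mathrm r$'s and $\mathrm b$'s read along $L_1$ behaves like a stack with its only open end at $u_1$, and whatever lies below on that stack cannot change until everything above it has been removed; and (iii) since exactly $\lambda_1$ vertices are white, moving a white vertex out of $L_1$ into $S$ forces a non-white vertex into $L_1$ at the same moment, which sharply limits the free space the $\mathrm r$'s and $\mathrm b$'s have to rearrange within $S$ — in particular, to route around one another through the branch point $c$. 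From (i)--(iii) I would build a quantity recording the cyclic pattern of the $\mathrm r$'s and $\mathrm b$'s together with the current contents of the $L_1$-stack and show that it is unchanged along every edge of $\mathcal G$. The hard part will be this verification: one must show that no sequence of moves — however cleverly it parks colors in the short legs $L_2,\dots,L_k$ while funneling others through $c$ — can permute an $\mathrm r$ past a $\mathrm b$, which is exactly the phenomenon that does occur once there are more than $\lambda_1$ white vertices. It may be cleanest to dispatch the case $k=2$ first, where $X$ is a path and the full word of $\mathrm r$'s and $\mathrm b$'s along $X$ is literally invariant (so $\mathcal G$ is visibly disconnected), and then either reduce the general case to it or carry out the walk-tracking argument directly.
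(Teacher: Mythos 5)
Your reduction to the coloring graph $\mathcal{G}$ is valid and is, in effect, the same simplification the paper makes implicitly: the invariant the paper uses depends only on which positions of $X$ carry $A$-vertices versus $B$-vertices, so it is genuinely a statement about three-colorings of $V(X)$. Your observations (i)--(iii) about how colors move in $\mathcal{G}$ are also all correct.

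The gap is that you never actually produce the invariant, and this is the entire content of the theorem. You propose recording ``the cyclic pattern of the $\mathrm r$'s and $\mathrm b$'s together with the current contents of the $L_1$-stack,'' but you do not define this quantity precisely, do not verify it is preserved, and explicitly flag the verification as the hard, unresolved step. This is a genuine missing idea, not merely an unwritten routine check: the difficulty you correctly identify --- that colors can be parked in the short legs and funneled through the branch vertex $c$ --- is exactly what a loosely specified ``word/stack'' invariant is at risk of not surviving. The paper sidesteps all of this with a much simpler invariant: order $V(X)$ by $x\preceq x'$ iff the path from the far leaf $z$ of $L_1$ to $x'$ passes through $x$, and call an arrangement \emph{special} if some $A$-vertex sits at a position that is $\preceq$ every position holding a $B$-vertex. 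Preservation of this single bit is then established by a short pigeonhole count: if a single swap turned a special arrangement $\sigma$ into a non-special $\tau$, one shows the $A$-vertex witnessing $\sigma$'s specialness sits at $c$, forcing $\tau^{-1}(A)\cup\tau^{-1}(B)$ together with the one extra swapped-in position to lie inside $V(X)\setminus L_1$, a set of size $n-\lambda_1$, even though they number $n-\lambda_1+1$. Your ``route around the branch point'' worry is resolved not by tracking a full word but by this one counting contradiction; without some concrete invariant of this kind, your proposal does not constitute a proof.
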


The \dfn{dandelion} graph $\Dand_{k,n}$ is the spider graph with $k-1$ legs of length $1$ and $1$ leg of length $n-k$. In \cite{friends}, it was shown that the graph $\FS(\Dand_{2,n},Y)$ is connected if and only if $Y$ is the complete graph $K_n$ (note that $\Dand_{2,n}=\Path_n$) and that for $n\geq 5$, the graph $\FS(\Dand_{3,n},Y)$ is connected if and only if the minimum degree of $Y$ is at least $n-2$. The following corollary, which follows from Theorems~\ref{thm:spider_sufficient}, \ref{thm:Wilsonian}, and \ref{thm:spider_necessary}, substantially generalizes these results. 

\begin{corollary}\label{thm:dandelions}
If $n\geq 2k-1$, then $\FS(\Dand_{k,n},Y)$ is connected if and only if every induced subgraph of $Y$ with $k$ vertices is connected. 
\end{corollary}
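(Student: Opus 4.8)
The plan is to obtain the corollary as a direct synthesis of Theorems~\ref{thm:spider_sufficient}, \ref{thm:Wilsonian}, and \ref{thm:spider_necessary}; essentially all of the work is already contained in those results, so the task here is mainly to set up the parameters correctly. Assume $k\ge 2$ and $n\ge 2k-1$, so that $n-k\ge k-1\ge 1$. First I would record two elementary facts about $X:=\Dand_{k,n}$: it is connected, being a spider, and its maximum degree equals $k$, since its center has degree $k$ while every other vertex has degree at most $2$. Writing $X=\Spider(\lambda_1,\ldots,\lambda_k)$, we may take $\lambda_1=n-k$ and $\lambda_2=\cdots=\lambda_k=1$, which is a weakly decreasing list of positive integers with $n-\lambda_1=k$.

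For the ``only if'' direction I would argue the contrapositive: if $Y$ has a disconnected induced subgraph on $k=n-\lambda_1$ vertices, then Theorem~\ref{thm:spider_necessary} shows at once that $\FS(X,Y)=\FS(\Spider(\lambda_1,\ldots,\lambda_k),Y)$ is disconnected. (This implication in fact only requires $n\ge k+1$.) For the ``if'' direction, assume every $k$-vertex induced subgraph of $Y$ is connected. I would first observe that $Y$ is itself connected: if it were not, one could pick a component $C$ with $\emptyset\ne C\ne V(Y)$ and, since $n\ge k\ge 2$, choose $k$ vertices of $Y$ with at least one in $C$ and at least one outside $C$; the subgraph they induce would then be disconnected, contradicting the hypothesis. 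Now $X$ and $Y$ are connected $n$-vertex graphs, $X$ has maximum degree $k\ge 2$, every $k$-vertex induced subgraph of $Y$ is connected, and, because $n\ge 2k-1$, Theorem~\ref{thm:Wilsonian} furnishes a $k$-vertex induced subgraph $Y_0$ of $Y$ with $\FS(\Star_k,Y_0)$ connected. Theorem~\ref{thm:spider_sufficient} then gives that $\FS(X,Y)=\FS(\Dand_{k,n},Y)$ is connected.

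Since the three cited theorems do all of the heavy lifting, I do not expect a serious obstacle. The only points that need a little care are the bookkeeping that makes the ``missing'' number of vertices $n-\lambda_1$ appearing in Theorem~\ref{thm:spider_necessary} coincide with the target value $k$, the verification that $\Dand_{k,n}$ has maximum degree exactly $k$, and the short observation --- required before Theorem~\ref{thm:spider_sufficient} can be invoked --- that the induced-subgraph hypothesis already forces $Y$ to be connected.
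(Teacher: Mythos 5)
Your proof is correct and follows exactly the route the paper has in mind: express $\Dand_{k,n}$ as $\Spider(n-k,1,\ldots,1)$ so that $n-\lambda_1=k$, get the ``only if'' direction from Theorem~\ref{thm:spider_necessary}, and get the ``if'' direction by combining Theorems~\ref{thm:Wilsonian} and~\ref{thm:spider_sufficient} after noting that $Y$ must itself be connected. The short check that the induced-subgraph hypothesis forces $Y$ to be connected is exactly the kind of bookkeeping the paper leaves implicit, and you handle it cleanly.
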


Corollary~\ref{thm:dandelions} is noteworthy because it is rare to find families of graphs $X$ such that we can completely characterize all graphs $Y$ such that $\FS(X,Y)$ is connected. The next few theorems provide further characterizations of connectedness, though they put restrictions on both $X$ and $Y$. In what follows, we denote $\overline G$ to be the \dfn{complement} of $G$, which has the same vertex set as $G$ and satisfies $\{u,v\}\in E(\overline G)$ if and only if $\{u,v\}\not\in E(G)$. 

\begin{theorem}\label{thm:cycles_complement}
Let $\lambda_1\geq\cdots\geq\lambda_k$ be positive integers such that $\lambda_1+\cdots+\lambda_k+1=n\geq 4$. The friends-and-strangers graph $\FS(\Spider(\lambda_1,\ldots,\lambda_k),\overline{\Cycle_{n}})$ is connected if and only if $(\lambda_1,\ldots,\lambda_k)$ is not of the form $(\lambda_1,1,1)$ and is not in the following list:
    \[ (1,1,1,1),\quad (2,2,1),\quad  (2,2,2),\quad  (3,2,1),\quad  (3,3,1),\quad  (4,2,1),\quad  (5,2,1).\]
\end{theorem}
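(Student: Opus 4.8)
The plan is to separate the argument into a "sufficiency" direction (showing connectedness holds for all $(\lambda_1,\dots,\lambda_k)$ not on the forbidden list) and a "necessity" direction (showing disconnectedness for each excluded tuple), using the general machinery established earlier in the paper together with Wilson's theorem. First I would handle the necessity of excluding tuples of the form $(\lambda_1,1,1)$: here $X=\Spider(\lambda_1,1,1)$ has maximum degree $3$, and $n-\lambda_1 = 3$, so by Theorem~\ref{thm:spider_necessary} it suffices to exhibit a disconnected $3$-vertex induced subgraph of $\overline{\Cycle_n}$. Since $\Cycle_n$ contains a path on $3$ vertices, its complement restricted to those three vertices has an isolated vertex, hence is disconnected; this kills all $(\lambda_1,1,1)$. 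For the seven sporadic tuples, I would similarly apply Theorem~\ref{thm:spider_necessary} where possible — checking whether $\overline{\Cycle_n}$ has a disconnected induced subgraph on $n-\lambda_1$ vertices — and for the remaining small cases (where $\lambda_1$ is small and that test fails) fall back on a direct computation or a parity/invariant obstruction, since $n$ is bounded ($n \le 8$) for all of them. This finite check is routine but must be done carefully case by case.

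For the sufficiency direction, the key observation is that $Y = \overline{\Cycle_n}$ has a very transparent induced-subgraph structure: a $k$-vertex subset $S \subseteq V(\overline{\Cycle_n})$ induces a connected subgraph precisely when the complement $\Cycle_n[S]$ — an induced subgraph of a cycle, hence a disjoint union of paths — has complement connected, which happens unless $S$ induces in $\Cycle_n$ a graph that is a disjoint union of at most... more precisely, $\overline{\Cycle_n}[S]$ is disconnected iff $\Cycle_n[S]$ (a union of paths on $k$ vertices) can be partitioned into two parts with all edges between them present in $\Cycle_n$, i.e. iff $k \le 3$ and $\Cycle_n[S]$ is a single path, or a couple of other degenerate configurations. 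So I would first pin down exactly for which $k$ and which $n$ \emph{every} $k$-vertex induced subgraph of $\overline{\Cycle_n}$ is connected. For $k \ge 4$ this holds for all $n \ge k$ except a short list of small $n$; for $k=3$ it essentially never holds (explaining why $(\lambda_1,1,1)$ is excluded). Having isolated the "good" pairs $(k,n)$, I would invoke Theorem~\ref{thm:spider_sufficient}: it remains to produce one $k$-vertex induced subgraph $Y_0$ of $\overline{\Cycle_n}$ with $\FS(\Star_k, Y_0)$ connected. When $n \ge 2k-1$ this is immediate from Theorem~\ref{thm:Wilsonian}; the genuine work is the regime $k \le n \le 2k-2$, where I would hand-pick $S$ so that $\overline{\Cycle_n}[S]$ is, say, a graph with a Hamiltonian cycle that is not a cycle, not the $\theta_0$ graph, and not bipartite, so Wilson's theorem (Theorem~\ref{thm:wilson}) gives connectedness of $\FS(\Star_k,Y_0)$.

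The main obstacle, I expect, is the boundary regime $k \le n \le 2k-2$ combined with the requirement to match the sufficiency and necessity lists \emph{exactly}. Getting every "good" pair $(k,n)$ to yield a valid $Y_0$ forces a careful analysis of which induced subgraphs $\overline{\Cycle_n}[S]$ on $k$ vertices avoid all Wilson exceptions, and for the smallest values of $n$ relative to $k$ the supply of such $S$ is thin, so one must argue that at least one good choice always exists — likely by taking $S$ to be $n$ vertices minus a well-spread independent-ish set in $\Cycle_n$ so that $\overline{\Cycle_n}[S]$ is dense enough to be non-bipartite and non-exceptional. Dually, on the necessity side the sporadic list must be shown to be \emph{complete}, i.e. no \emph{other} small tuple fails; this requires confirming that for every pair $(k,n)$ not triggering the "disconnected $(n-\lambda_1)$-subgraph" obstruction, the sufficiency construction does go through. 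Reconciling these two finite analyses so that their union is precisely the stated exception list is where the care lies; the infinite part of the theorem ($n$ large) is comparatively painless given Theorems~\ref{thm:spider_sufficient} and~\ref{thm:Wilsonian}.
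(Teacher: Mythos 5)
Your necessity direction is fine as a plan: Theorem~\ref{thm:spider_necessary} handles $(\lambda_1,1,1)$ via the disconnected $3$-vertex induced subgraph, and the seven sporadic tuples are a finite check (which the paper in fact does by computer; as you suspect, Theorem~\ref{thm:spider_necessary} does not bite for any of the seven, since the relevant $(n-\lambda_1)$-vertex induced subgraphs of $\overline{\Cycle_n}$ are all connected). The sufficiency direction, however, has a real gap. You plan to invoke Theorem~\ref{thm:spider_sufficient}, whose first hypothesis is that \emph{every} $k$-vertex induced subgraph of $\overline{\Cycle_n}$ is connected, where $k$ is the maximum degree of $\Spider(\lambda_1,\ldots,\lambda_k)$, i.e.\ its number of legs. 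For $k=3$ this hypothesis is \emph{always} false when $n\geq4$: take $S$ to be three consecutive vertices of $\Cycle_n$; then $\overline{\Cycle_n}[S]$ has an isolated vertex. You noticed this but drew the wrong conclusion ("explaining why $(\lambda_1,1,1)$ is excluded"): the same reasoning would "explain" excluding every $3$-legged spider, yet $(6,2,1)$, $(4,3,1)$, $(3,2,2)$, and all their extensions are on the \emph{connected} side of the theorem. So your route cannot establish connectedness for any $3$-legged spider, i.e.\ for infinitely many of the cases you need. The problem persists at $k=4$: for $(2,1,1,1)$ with $n=6$, every $4$-vertex induced subgraph of $\overline{\Cycle_6}$ is connected, but none is biconnected and non-bipartite (each is the complement of a disjoint union of paths with $2$ or $3$ edges), so the Wilson step cannot produce a valid $Y_0$ either.

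The paper's sufficiency argument is of a fundamentally different character and does not pass through Theorem~\ref{thm:spider_sufficient} at all. It first proves a hereditary extension lemma (Lemma~\ref{lem:hereditary}, specialized in Corollary~\ref{lemma2.2}): if $\FS(X,\overline{\Cycle_n})$ is connected and one attaches a pendant vertex to $X$, then $\FS(X',\overline{\Cycle_{n+1}})$ is connected, essentially because deleting any vertex of $\overline{\Cycle_{n+1}}$ leaves a graph still containing $\overline{\Cycle_n}$ as a spanning subgraph. It then checks by computer that $\FS(\Spider(\rho),\overline{\Cycle_{n'}})$ is connected for the five minimal non-excluded partitions $(1,1,1,1,1)$, $(2,1,1,1)$, $(6,2,1)$, $(4,3,1)$, $(3,2,2)$, and observes that every non-excluded $(\lambda_1,\ldots,\lambda_k)$ arises from one of these by repeatedly adding pendant vertices. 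This bypasses exactly the obstruction that breaks your approach. To salvage a proof along your lines you would need some replacement mechanism for small $k$, and the hereditary-family lemma is precisely that mechanism.
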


For $n\geq 4$, define the \dfn{fruit graph} $\Fruit_{n}$ to be the graph obtained from a cycle of size $n-1$ by adding an extra vertex and a single edge connecting that vertex to one of the vertices in the cycle. More precisely, $\Fruit_n$ has vertex set $[n]$ and edge set \[\{1,n-1\}\cup\{1,n\}\cup\{\{i,i+1\}:i\in[n-2]\}.\] The following result characterizes when the friends-and-strangers graph of a spider and the complement of a fruit graph is connected.

\begin{theorem}\label{thm:fruit_graphs}
Let $\lambda_1\geq\cdots\geq\lambda_k$ be positive integers such that $k\geq 3$ and $\lambda_1+\cdots+\lambda_k+1=n$. Then $\FS(\Spider(\lambda_1,\ldots,\lambda_k),\overline{\Fruit_n})$ is disconnected if and only if  $(\lambda_1,\ldots,\lambda_k)$ is of one of the following forms: \[(\lambda_1,1,1,1),\quad(\lambda_1,\lambda_2,1),\quad (2,2,2).\]
\end{theorem}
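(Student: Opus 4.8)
This is an ``if and only if'' statement, and I would prove the two directions separately. Throughout, the structural features of $\overline{\Fruit_n}$ that matter are: the vertex $1$ is nonadjacent to exactly $2,n-1,n$, which form a triangle; the vertex $n$ is adjacent to everything except $1$ (a near-universal vertex); and deleting $n$ from $\overline{\Fruit_n}$ leaves $\overline{\Cycle_{n-1}}$.

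\medskip
\noindent\emph{Disconnectedness.} Since $\{1,2,n-1,n\}$ induces $K_3\sqcup K_1$ and $\{1,2,n\}$ induces $K_2\sqcup K_1$ in $\overline{\Fruit_n}$, this graph has a disconnected induced subgraph on $3$ vertices and one on $4$ vertices. For a partition $(\lambda_1,1,1)$ we have $n-\lambda_1=3$, and for $(\lambda_1,2,1)$ or $(\lambda_1,1,1,1)$ we have $n-\lambda_1=4$, so in these cases Theorem~\ref{thm:spider_necessary} immediately gives disconnectedness. For the remaining disconnected partitions, $(\lambda_1,\lambda_2,1)$ with $\lambda_2\geq 3$ and $(2,2,2)$, we have $n-\lambda_1\geq 5$, and here Theorem~\ref{thm:spider_necessary} cannot help: since $\Fruit_n$ has maximum degree $3$ and its unique cycle has length $n-1\geq 5$, it has no $4$-cycle, so no induced subgraph of $\overline{\Fruit_n}$ on at least $5$ vertices is a join, i.e.\ all such subgraphs are connected. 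These cases I would handle by hand, the key point being to produce an invariant for the infinite family $(\lambda_1,\lambda_2,1)$, $\lambda_2\geq 3$: person $1$ can never swap with any of persons $2,n-1,n$, so in every reachable arrangement the placement of these four people among the three legs of the spider (whose center carries a pendant leaf) is constrained, and I would look for a ``separation'' statistic of this configuration that is preserved by every friendly swap while distinguishing two explicit arrangements. The single graph $\FS(\Spider(2,2,2),\overline{\Fruit_7})$ can be settled by the same kind of invariant or by a direct computation.

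\medskip
\noindent\emph{Connectedness.} The engine here is a reduction lemma: if $v$ is a leaf of a spider $X$, then $\FS(X,\overline{\Fruit_n})$ is connected whenever $\FS(X-v,\overline{\Cycle_{n-1}})$ is. To prove it, one first shows that from any arrangement person $n$ can be moved to chair $v$: person $n$ walks freely through $X$ as long as it does not have to swap with person $1$, and person $1$, being blocked only by persons $2,n-1,n$, can always be ushered aside (the one delicate configuration---person $1$ at the center of a three-legged spider with $2,n-1,n$ on its three neighbors---is cleared by first pushing those three outward along their legs, which is possible since in every case to which the lemma is applied all legs have length at least $2$). Once person $n$ occupies $v$, the arrangements fixing person $n$ at $v$ span a copy of $\FS(X-v,\overline{\Cycle_{n-1}})$, which is connected by hypothesis, so $\FS(X,\overline{\Fruit_n})$ is connected. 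With the lemma in hand: when $k\geq 5$ I would instead apply Theorem~\ref{thm:spider_sufficient}, using the fact noted above that every induced subgraph of $\overline{\Fruit_n}$ on at least $5$ vertices is connected, together with $Y_0=$ any $k$-vertex induced subgraph of $\overline{\Fruit_n}$ containing $n$ but not $1$---such a $Y_0$ has a universal vertex, so Wilson's theorem (Theorem~\ref{thm:wilson}) certifies that $\FS(\Star_k,Y_0)$ is connected. When $k=4$, apply the reduction lemma with $v$ the end of any leg of length $\geq 2$; the shortened spider then has four legs whose lengths sum to $n-2\geq 5>4$, so its partition is not $(1,1,1,1)$ and hence, by Theorem~\ref{thm:cycles_complement}, $\FS(X-v,\overline{\Cycle_{n-1}})$ is connected. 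When $k=3$ (so all three legs have length $\geq 2$ and $\lambda_1\geq 3$), apply the lemma with $v$ the end of the longest leg; the shortened partition has all parts $\geq 2$ and equals $(2,2,2)$ only when the original partition is $(3,2,2)$, so Theorem~\ref{thm:cycles_complement} applies except in that single case, which I would verify directly.

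\medskip
\noindent\emph{Main obstacle.} I expect the hardest part to be the hand argument for the disconnectedness of $\FS(\Spider(\lambda_1,\lambda_2,1),\overline{\Fruit_n})$ when $\lambda_2\geq 3$: this is an infinite family beyond the reach of Theorem~\ref{thm:spider_necessary}, so it demands a uniform structural invariant, and pinning down the right statistic on the positions of persons $1,2,n-1,n$ is the crux. Secondary difficulties are making the routing step of the reduction lemma fully rigorous and dispatching the lone base case $(3,2,2)$; everything else is organized around Theorems~\ref{thm:spider_sufficient}, \ref{thm:spider_necessary}, \ref{thm:cycles_complement}, and Wilson's theorem.
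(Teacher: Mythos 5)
The main gap is in your disconnectedness argument, and you flag it yourself: the infinite family $\Spider(\lambda_1,\lambda_2,1)$ with $\lambda_2\geq 3$. You are right that Theorem~\ref{thm:spider_necessary} cannot reach this family (the paper uses it only for the case $k=4$, $\lambda_2=1$), but hunting for a bespoke ``separation statistic'' is unnecessary. The missing idea is simple and handles all of $(\lambda_1,\lambda_2,1)$ at once: the identity bijection $V(\Fruit_n)\to V(\overline{\Fruit_n})$ is an isolated vertex of $\FS(\Fruit_n,\overline{\Fruit_n})$ (for every edge $\{a,b\}$ of $\Fruit_n$, $\{a,b\}\notin E(\overline{\Fruit_n})$, so no swap is available), and $\Spider(\lambda_1,\lambda_2,1)$ is a spanning subgraph of $\Fruit_n$ (root the spider at vertex $1$, use the two arcs of the cycle for the long legs and the pendant vertex $n$ for the short leg). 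Since $\FS(X',Y)$ is an edge-subgraph of $\FS(X,Y)$ whenever $X'\subseteq X$ share a vertex set, the same vertex is isolated in $\FS(\Spider(\lambda_1,\lambda_2,1),\overline{\Fruit_n})$, which is therefore disconnected. This also subsumes your Theorem~\ref{thm:spider_necessary} treatment of $(\lambda_1,1,1)$ and $(\lambda_1,2,1)$.

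Your connectedness route is genuinely different from the paper's and is plausible, but the reduction lemma's routing step is doing much more work than your sketch admits, and I don't think ``push $2,n-1,n$ outward'' survives scrutiny as stated. The enemies of $1$ are precisely $\{2,n-1,n\}$, but $2$'s enemies include $3$, $n-1$'s include $n-2$, etc., so in a configuration where consecutive integers stack up along a leg ($1$ at the center, $2,3,4,\dots$ outward on one leg, $n-1,n-2,\dots$ outward on another) nobody on those legs can move, and the only flexibility lives on the leg containing $n$. Showing that this flexibility always suffices to eventually escort $n$ across the center requires a careful case analysis that you have not supplied, and the issue persists for $k=4$ with legs of length $1$. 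The paper sidesteps this entirely: it proves a clean general transfer lemma (Lemma~\ref{lem:hereditary}) for any hereditary family of connected graphs, specializes it to the hereditary closure of the $\overline{\Fruit_N}$'s (Corollary~\ref{lemma2.2fruit}, which needs both $\FS(X,\overline{\Cycle_n})$ and $\FS(X,\overline{\Fruit_n})$ connected as inputs, precisely because deleting different vertices of $\overline{\Fruit_{n+1}}$ can yield either type of graph), and then bootstraps from three computer-verified base cases $\Spider(1,1,1,1,1)$, $\Spider(2,2,1,1)$, $\Spider(3,2,2)$. If you want to keep your single-hypothesis reduction lemma, you must actually prove the routing claim; otherwise it is cleaner to adopt the hereditary-family argument, which avoids any explicit routing.
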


The next theorem guarantees the connectedness of $\FS(X,Y)$ whenever the minimum degree of $Y$ is large and $X$ is a connected graph that contains some small spider. 

\begin{theorem}\label{thm:spiders_min_degree}
Let $X$ and $Y$ be $n$-vertex graphs such that $Y$ has minimum degree at least $n-3$. The friends-and-strangers graph $\FS(X,Y)$ is connected if $X$ is connected and contains a (not necessarily induced) subgraph isomorphic to at least one of the following: 
\[\Star_7,\quad\Spider(2,1,1,1,1),\quad\Spider(2,2,1,1),\quad \Spider(3,3,2),\quad \Spider(4,2,2),\quad \Spider(4,3,1).\]
\end{theorem}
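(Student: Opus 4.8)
The key structural fact is that the hypothesis $\delta(Y)\ge n-3$ is equivalent to $\Delta(\overline{Y})\le 2$, so that $\overline{Y}$ is a disjoint union of paths and cycles. I will use repeatedly the elementary monotonicity principle that $\FS(X',Y)$ is a spanning subgraph of $\FS(X,Y)$ whenever $X'$ is a spanning connected subgraph of $X$; thus it suffices to prove that $\FS(X',Y)$ is connected for a well-chosen $X'\subseteq X$, and in particular we may replace $X$ by a spanning tree that contains the prescribed spider. I will also record, as a consequence of Wilson's theorem (Theorem~\ref{thm:wilson}), that if $\delta(Y)\ge n-3$ and $7\le m\le n-1$, then every $m$-vertex induced subgraph $Y_0$ of $Y$ has $\delta(Y_0)\ge m-3\ge 4$, which forces $Y_0$ to be $2$-connected, non-bipartite, not a cycle, and (by its order and minimum degree) not Wilson's exceptional $7$-vertex graph; hence $\FS(\Star_m,Y_0)$ is connected.

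The proof then splits on $\Delta:=\Delta(X)$, using that the six listed spiders have maximum degrees $7,5,4,3,3,3$ and that a vertex of degree $m-1$ in $X$ yields an $\Star_m$ subgraph. Suppose first that $\Delta\ge 5$. I would apply Theorem~\ref{thm:spider_sufficient} to $X$ itself with $k=\Delta$. The hypothesis that every $k$-vertex induced subgraph of $Y$ is connected holds automatically: a disconnected induced subgraph on a set $S$ would force $\overline{Y}[S]$ to contain a spanning complete bipartite subgraph, which is impossible on $\ge 5$ vertices under $\Delta(\overline{Y})\le 2$. For the remaining hypothesis, the Wilson consequence above supplies the required $Y_0$ when $k\ge 7$; when $k\in\{5,6\}$ one uses Theorem~\ref{thm:Wilsonian} once $n\ge 2k-1$, and for the finitely many remaining small values of $n$ (all $\le 10$) one checks directly that, since $\delta(Y)\ge n-3\ge 4$ guarantees a triangle in $Y$, this triangle can be grown to a $k$-vertex induced subgraph that is $2$-connected, non-bipartite, and not a cycle. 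This settles every $X$ containing $\Star_7$ or $\Spider(2,1,1,1,1)$, and more generally every $X$ of maximum degree at least $5$.

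Next suppose $\Delta=4$; then $X$ must contain $\Spider(2,2,1,1)$, and I would again invoke Theorem~\ref{thm:spider_sufficient}, now with $k=4$. One checks that its hypotheses hold unless $\overline{Y}$ has a connected component equal to $\Cycle_4$: the condition that every $4$-vertex induced subgraph of $Y$ is connected is exactly the condition that $\overline{Y}$ has no $\Cycle_4$ component, and a suitable $Y_0$ (an induced $K_4$ or $K_4$ minus an edge) always exists because $\overline{Y}$ contains four vertices spanning at most one edge. When $\overline{Y}$ does have a $\Cycle_4$ component, a direct argument is needed. Finally suppose $\Delta=3$; then $X$ contains one of $\Spider(3,3,2)$, $\Spider(4,2,2)$, $\Spider(4,3,1)$, so $n\ge 9$, and Theorem~\ref{thm:spider_sufficient} is no longer available (it would require $k=3$, which is too strong). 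Here I would use monotonicity in $Y$: if $\overline{Y}$ is a linear forest then it is a spanning subgraph of $\Cycle_n$, so $Y\supseteq\overline{\Cycle_n}$ and Theorem~\ref{thm:cycles_complement} applies (none of the triples $(3,3,2),(4,2,2),(4,3,1)$ lies on its exceptional list); and one can similarly invoke Theorem~\ref{thm:fruit_graphs} for the triples $(3,3,2)$ and $(4,2,2)$ in the near-linear case $\overline{Y}=\Cycle_{n-1}\cup\{\text{pt}\}$. The remaining cases — chiefly those where $\overline{Y}$ contains one or more cycle components that prevent any such enlargement, together with the triple $(4,3,1)$ — must be handled directly, and here one must also first reduce a general connected subcubic $X$ containing the spider $S$ to the case $X=S$, since such $X$ need not contain a spanning spider.

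I expect the main obstacle to be precisely this last step: proving connectedness of $\FS(S,Y)$ when $S$ is one of the three $9$-vertex spiders and $\overline{Y}$ is a disjoint union of short cycles (the extreme example being $\overline{Y}=\Cycle_3\cup\Cycle_3\cup\Cycle_3$ at $n=9$). In such cases one cannot enlarge $\overline{Y}$ to the complement of a cycle or a fruit graph, one cannot fall back on a high-degree vertex of $X$ and Wilson's theorem, and one cannot shrink $X$ to a spanning spider, so the argument must proceed by explicitly constructing $(X,Y)$-friendly swap sequences — presumably by localizing swaps to the non-universal part of $Y$, in the spirit of the proofs of Theorem~\ref{thm:spider_sufficient} and the cycle/fruit theorems — and then extending from $S$ to an arbitrary connected subcubic supergraph by a separate gluing lemma.
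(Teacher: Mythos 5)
Your proposal takes a substantially different route from the paper's, and it leaves genuine gaps that you yourself flag but do not close. The paper's proof is a two-line affair: one checks by computer that $\FS(G,H)$ is connected for each of the six listed spiders $G$ (on $n'$ vertices) and \emph{every} graph $H$ on $n'$ vertices with $\delta(H)\ge n'-3$, and then applies Corollary~\ref{cor:min_degre} --- the specialization of the hereditary gluing Lemma~\ref{lem:hereditary} to the family $\{Y:\Delta(\overline Y)\le 2\}$ --- to grow $G$ one pendant vertex at a time into a spanning connected subgraph $\widetilde X$ of $X$, at each step staying within the minimum-degree family. That corollary is precisely the ``separate gluing lemma'' you say is needed at the end of your argument and never supply. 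Notice that it is agnostic to the maximum degree of $X$ and to the cycle structure of $\overline Y$, so it bypasses your entire case split on $\Delta(X)$ and on whether $\overline Y$ is a linear forest, a near-cycle, or a union of short cycles.

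The concrete gaps in your write-up are exactly the ones you flag: (i) the reduction from a general connected $X\supseteq S$ to $X=S$ is asserted to require a gluing lemma but none is proved; (ii) the base case when $\Delta(X)=3$ and $\overline Y$ is a union of short cycles (e.g.\ $\overline Y = 3\Cycle_3$ at $n=9$) is declared ``the main obstacle'' and left open; (iii) the case $\Delta(X)=4$ with a $\Cycle_4$ component of $\overline Y$ is declared to need ``a direct argument'' that is not given. Both (ii) and (iii) are finite checks and the paper dispatches them (and all other base cases) by computer; and (i) is exactly Corollary~\ref{cor:min_degre}. Also, a small caution on your $\Delta=3$ near-linear sub-case: $\overline{\Fruit_n}$ has a vertex of degree $n-4$, so it does \emph{not} lie in the minimum-degree-$\ge n-3$ family; your invocation of Theorem~\ref{thm:fruit_graphs} is still legitimate because $\overline{\Fruit_n}$ is a spanning \emph{subgraph} of your $Y$ when $\overline Y = \Cycle_{n-1}\cup K_1$, but this should be stated, and you correctly exclude $(4,3,1)$ there since Theorem~\ref{thm:fruit_graphs} lists $(\lambda_1,\lambda_2,1)$ as an exception. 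Where your proposal is sound --- the $\Delta\ge 5$ analysis via Theorems~\ref{thm:spider_sufficient} and~\ref{thm:Wilsonian} --- it proves something slightly stronger than the paper states in that regime, at the cost of much more work; but as a proof of the theorem as a whole it is incomplete.
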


All of our results mentioned so far have been concerned with whether or not a friends-and-stranger graph is connected. In a different vein, it is also natural to study cycles in friends-and-strangers graphs; see \cite{Balitskiy, Felsner, naatz} for previous work on cycles in other flip graphs. Note that a cycle in a friends-and-strangers graph represents a nontrivial way that we can perform a sequence of friendly swaps that returns us to the arrangement of people with which we started.

A graph is called \dfn{even-degree} if each of its vertices has even degree. An \dfn{edge-subgraph} of a graph $G$ is a subgraph of $G$ that has the same vertex set as $G$. Given edge-subgraphs $H$ and $H'$ of $G$, let $H\triangle H'$ be the edge-subgraph of $G$ whose edge set is the symmetric difference of the edge sets of $H$ and $H'$. Note that if $H$ and $H'$ are even-degree, then so is $H\triangle H'$. The \dfn{cycle space} of $G$ is the set of all even-degree edge-subgraphs; it is a vector space over the $2$-element field $\mathbb F_2$ in which the addition operation is given by the symmetric difference $\triangle$. We can view a cycle in $G$ as an edge-subgraph in which all vertices not in the cycle are isolated vertices. It is well known that the cycle space of a (finite) graph is spanned by its cycles. Naatz \cite{naatz} proved that if $Y$ is any $n$-vertex graph, then the cycle space of $\FS(\Path_n,Y)$ is spanned by $4$-cycles and $6$-cycles (he stated this in the case when $Y$ is the incomparability graph of an $n$-element poset, but his methods apply more generally for any $Y$). 

In this article, we will study cycle spaces of friends-and-strangers graphs of the form $\FS(\Cycle_n,Y)$. The requisite analysis ends up being more complicated than what Naatz used to study $\FS(\Path_n,Y)$, but we will still obtain analogues of several of his results. 

A \dfn{dominating set} of a graph $G$ is a subset $D$ of the vertex set of $G$ such that every vertex in $G$ is either in $D$ or is adjacent to a vertex in $D$. The minimum size of a dominating set of $G$ is called the \dfn{domination number} of $G$. In the following theorem, we must impose the additional (fairly mild) condition that the domination number of $Y$ is at least $3$.

\begin{theorem}\label{thm:isometric}
Let $Y$ be an $n$-vertex graph with domination number at least $3$. The cycle space of $\FS(\Cycle_n,Y)$ is spanned by $4$-cycles and $6$-cycles. If $Y$ is triangle-free, then the cycle space of $\FS(\Cycle_n,Y)$ is spanned by $4$-cycles.
\end{theorem}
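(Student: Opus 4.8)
The plan is to start from the standard fact that the cycle space (over $\mathbb{F}_2$) of any graph $G$ is spanned by its \emph{isometric} cycles---the cycles $C$ such that $d_G(u,v)$ equals the distance from $u$ to $v$ along $C$ for all $u,v\in V(C)$---which follows by repeatedly replacing a non-isometric cycle by the sum of two strictly shorter elements of the cycle space cut out by a geodesic shortcut. Since $\FS(\Cycle_n,Y)$ is bipartite (every edge is composition with a transposition of the chairs, and the transpositions along a closed walk multiply to the identity, so the walk has even length), all of its cycles have even length. Thus Theorem~\ref{thm:isometric} reduces to: \textbf{(a)} if $Y$ has domination number at least $3$, then $\FS(\Cycle_n,Y)$ has no isometric cycle of length at least $8$; and \textbf{(b)} if $Y$ is moreover triangle-free, then every isometric $6$-cycle of $\FS(\Cycle_n,Y)$ is a symmetric difference of $4$-cycles. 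Part (a) gives the first assertion of the theorem, and (b) upgrades it to the second.

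I would dispatch (b) first, as it is a local computation. Identify $V(\Cycle_n)$ with $\mathbb{Z}/n\mathbb{Z}$ and write $s_i$ for the operation swapping the people in chairs $i$ and $i+1$, defined wherever it is an $(\Cycle_n,Y)$-friendly swap. A $6$-cycle of $\FS(\Cycle_n,Y)$ is recorded by a reduced word $s_{i_1}\cdots s_{i_6}$ multiplying to the identity permutation of the chairs, with six distinct intermediate configurations. A short case analysis of such words in the cyclic adjacent transpositions shows that the $6$-cycle is either a \emph{braid hexagon} arising from $s_is_{i+1}s_i=s_{i+1}s_is_{i+1}$---whose presence in $\FS(\Cycle_n,Y)$ forces the three people then sitting in chairs $i,i+1,i+2$ to be pairwise adjacent in $Y$, impossible when $Y$ is triangle-free---or else is a symmetric difference of $4$-cycles coming from commutations $s_as_b=s_bs_a$ of non-adjacent transpositions. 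This yields (b) and incidentally explains why the triangle-free case is exactly the case in which $4$-cycles suffice.

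Part (a) is the heart of the argument, and I would prove it by contradiction: suppose $C$ is an isometric cycle of length $2\ell\ge 8$. Isometry of $C$ says precisely that no sequence of $(\Cycle_n,Y)$-friendly swaps shortcuts between two configurations lying on $C$; in particular the word recording $C$ cannot be shortened by backtrack deletions, commutations, or braid moves, nor by any longer detour. The goal is to exhibit two configurations on $C$ whose distance along $C$ exceeds their distance in $\FS(\Cycle_n,Y)$. The hypothesis that $Y$ has domination number at least $3$ enters through the following ``parking'' principle: for any two people $p,q$, the set $\{p,q\}$ fails to dominate $Y$, so some third person $r$ is adjacent in $Y$ to neither $p$ nor $q$. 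Such parking spots let one re-route a person around an obstruction created by another, and thereby build a friendly-swap shortcut once the cycle is long enough that two of its configurations differ by a permutation of small length in the cyclic adjacent transpositions. By contrast, with only domination number at least $2$ one can block a single person but not a pair, so the parking principle fails; this is the source of the extra hypothesis and of the subtleties the abstract alludes to concerning how walks interact with the wraparound edge of $\Cycle_n$, which has no analogue in Naatz's path setting.

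The main obstacle is entirely within part (a): turning ``$C$ is a long isometric cycle'' into an explicit pair of configurations on $C$ that are close in $\FS(\Cycle_n,Y)$, together with an explicit friendly-swap shortcut between them. This requires a careful analysis of reduced words in the cyclic adjacent transpositions---an affine-symmetric-group flavored combinatorics, complicated precisely by the rotational relations that the wraparound generator $s_{n-1}$ introduces---combined with the parking principle to certify that every shortcut swap is friendly. The remaining ingredients, namely the reduction to isometric cycles, bipartiteness, and the hexagon classification, are routine.
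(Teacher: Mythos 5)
Your reduction of the theorem to the two claims ``(a) no isometric cycle of length $\geq 8$'' and ``(b) every isometric $6$-cycle is a sum of $4$-cycles when $Y$ is triangle-free'' is logically sound as a reduction (the cycle space of any graph is indeed spanned by its isometric cycles), but claim (a) is simply \textbf{false}, so the plan cannot be salvaged as stated. Take $n=8$ and let $Y$ be a perfect matching on $8$ vertices --- four disjoint edges $\{a,b\},\{c,d\},\{e,f\},\{g,h\}$ --- which has domination number $4\geq 3$ and is triangle-free. Seat the matched pairs on the antipodal edges $\{1,2\},\{3,4\},\{5,6\},\{7,8\}$ of $\Cycle_8$. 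From that seating, the only friendly swaps, at any moment, are the four commuting ones on these edges, so the connected component is a $4$-dimensional hypercube, and the $8$-cycle whose label sequence is $ab,cd,ef,gh,ab,cd,ef,gh$ is an isometric $8$-cycle (opposite vertices along it are at Hamming distance $4$, etc., exactly matching the cycle distances). This cycle is nonetheless a symmetric difference of $4$-cycles, as the theorem asserts --- the point being that ``isometric cycles span the cycle space'' does not mean isometric cycles are a basis or are irredundant.

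The paper's argument takes a different and essential turn that you are missing. It does not try to rule out long isometric cycles. Instead it proves (Propositions analogous to your setup, via the Coxeter-move machinery for anchored walks and the domination-number hypothesis) that in an isometric cycle $C$, two edges have the same label if and only if they are \emph{opposite}. Then for $\abs{C}=2m\geq 8$ it locates $k\in\{2,3\}$ consecutive edges of $C$ lying on a short cycle $D$ (a $4$- or $6$-cycle, with $k=2$ forced in the triangle-free case), and forms $C'=C\triangle D$. The crucial step is that $C'$ still has length $2m$ but is \emph{not} isometric: the edges of $D$ carried the same labels as their opposites, so $C'$ now contains two equal-labelled edges that are not opposite, contradicting isometry. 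Thus $C'$ decomposes into strictly shorter cycles by the standard argument, and the induction closes with $C=D\triangle C'$. That ``peel off a short cycle and watch isometry break'' step has no counterpart in your proposal, and your ``parking principle'' sketch for (a) would need to contradict a true statement, so there is no way to fill in the details. If you want to pursue a route close to yours, the viable target is not ``isometric cycles of length $\geq 8$ do not exist'' but rather ``every isometric cycle of length $\geq 8$ has a short cycle meeting it in two or three consecutive edges whose removal by $\triangle$ destroys isometry,'' which is precisely what the label-sequence/Coxeter-move analysis delivers.

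Your part (b) analysis (braid hexagons force a triangle, commutation hexagons are sums of $4$-cycles) is on the right track in spirit, but note that the paper handles the triangle-free case inside the same induction rather than classifying $6$-cycles separately, and you have not accounted for how the inductive decomposition of a long cycle in the triangle-free case avoids producing $6$-cycles along the way.
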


The proof of Theorem~\ref{thm:isometric} proceeds by first establishing a more general result (Theorem~\ref{thm:isocycles}) about when walks in $\FS(\Cycle_n,Y)$ can be obtained from one another via a sequence of \emph{Coxeter moves}, which are essentially the relations defining the affine symmetric group as a Coxeter group. 

As an example of Theorem~\ref{thm:isometric}, let $Y$ be the dandelion $\Dand_{3,8}$, which has domination number $3$ and is triangle-free. Figure~\ref{fig:cycles} shows one connected component of $\FS(\Cycle_8,\Dand_{3,8})$; upon inspection, we see that the cycle space of this connected component is generated by $4$-cycles, as predicted by Theorem~\ref{thm:isometric}. 

\begin{figure}[ht]
  \begin{center}\includegraphics[height=6cm]{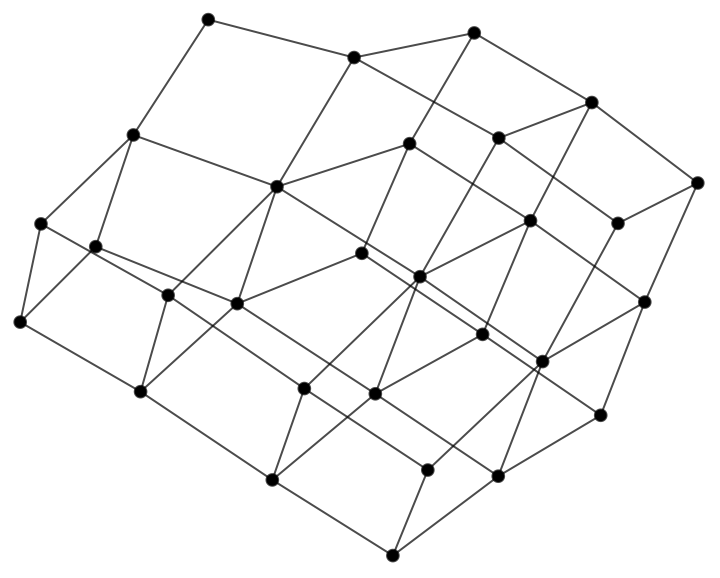}\end{center}
  \caption{One connected component of $\FS(\Cycle_8,\Dand_{3,8})$. }\label{fig:cycles}
\end{figure}

\begin{remark}
It would be interesting to understand the cycle spaces of other types of friends-and-strangers graphs besides those of the form $\FS(\Path_n,Y)$ or $\FS(\Cycle_n,Y)$. 
\end{remark}

The plan for the paper is as follows. In Section~\ref{sec:preliminaries}, we establish some notation and terminology, and we recall some previous results about friends-and-strangers graphs. 
In Section~\ref{Sec:General_Spiders}, we prove Theorems~\ref{thm:spider_sufficient},~\ref{thm:Wilsonian},~and \ref{thm:spider_necessary} and use them to deduce Corollary~\ref{thm:dandelions}. In Section~\ref{Sec:Complements_Cycles}, we establish Theorems~\ref{thm:cycles_complement},~\ref{thm:fruit_graphs}, and~\ref{thm:spiders_min_degree}. Section~\ref{Sec:Cycles} studies Coxeter moves and cycle spaces of friends-and-strangers graphs of the form $\FS(\Cycle_n,Y)$; it is in this section that we prove Theorem~\ref{thm:isometric}.

\section{Preliminaries}\label{sec:preliminaries}
The purpose of this section is to fix some terminology and notation and to discuss previous results that we will need in future sections.

\subsection{Transpositions and Walks}
For any set $S$ and any elements $s,s'\in S$, we write $(s\,\,s')$ for the bijection from $S$ to itself that swaps $s$ and $s'$ and fixes all elements of $S\setminus\{s,s'\}$. This is useful for notating friendly swaps in the friends-and-strangers graph $\FS(X,Y)$. Indeed, suppose $\sigma$ is a vertex in $\FS(X,Y)$ and $\{x,x'\}\in E(X)$. Let $y=\sigma(x)$ and $y'=\sigma(x')$. If $\{y,y'\}\in E(Y)$, then we can perform an $(X,Y)$-friendly swap across the edge $\{x,x'\}$ in order to obtain a new permutation $\sigma'=\sigma\circ (x\,\,x')=(y\,\,y')\circ\sigma$. 

A \dfn{walk} in a graph $G$ is a sequence of vertices such that any two consecutive vertices in the sequence are adjacent in $G$. Equivalently, we can think of a walk as a starting vertex together with a sequence of edges such that any two consecutive edges share a vertex. Each edge $e$ in the friends-and-strangers graph $\FS(X,Y)$ corresponds to an $(X,Y)$-friendly swap. It is convenient to label such an edge with the pair of people (i.e., vertices of $Y$) who performed the swap. More precisely, if $e=\{\sigma,\sigma'\}$, then we define the \dfn{edge label} $\psi(e)$ to be the unique pair $\{y,y'\}$ such that $\sigma'=(y\,\,y')\circ \sigma$. For ease of notation, we will often omit the set braces and write $yy'$ (or equivalently, $y'y$) for this edge label $\{y,y'\}$. Note that we can specify a walk in $\FS(X,Y)$ by saying its starting vertex together with it \dfn{label sequence}, which is just the sequence of edge labels of the edges used in the walk.

\subsection{Bipartite Graphs}
The following proposition from \cite{friends} tells us that the friends-and-strangers graph of two bipartite graphs is disconnected. 
\begin{proposition}[\cite{friends}]\label{prop:bipartite}
Suppose $X$ and $Y$ are bipartite graphs with $V(X)=V(Y)=[n]$. Let $\{A_X,B_X\}$ be a bipartition of $X$, and let $\{A_Y,B_Y\}$ be a bipartition of $Y$. For $\sigma\in V(\FS(X,Y))$, define $p(\sigma)\in\{0,1\}$ by \[p(\sigma)=|\sigma(A_X)\cap A_Y|+\frac{\sgn(\sigma)+1}{2}\pmod{2},\] where $\sgn(\sigma)$ is the sign of $\sigma$, viewed as a permutation in the symmetric group $S_n$. If $\tau,\tau'\in V(\FS(X,Y))$ are such that $p(\tau)\neq p(\tau')$, then $\tau$ and $\tau'$ are in different connected components of $\FS(X,Y)$.  
\end{proposition}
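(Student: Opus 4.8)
The plan is to show that the quantity $p(\sigma)$ is invariant under any single $(X,Y)$-friendly swap, so it is constant on each connected component of $\FS(X,Y)$; the contrapositive of that statement is exactly the claim. So let $e=\{\sigma,\sigma'\}$ be an edge of $\FS(X,Y)$, say with $\sigma'=\sigma\circ(a\,\,b)$ where $\{a,b\}\in E(X)$ and $\{\sigma(a),\sigma(b)\}\in E(Y)$. I would track separately how the two summands in the definition of $p$ change.

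For the sign term, $\sigma' = \sigma\circ(a\,\,b)$ differs from $\sigma$ by a transposition, so $\sgn(\sigma')=-\sgn(\sigma)$; hence $\tfrac{\sgn(\sigma')+1}{2}$ and $\tfrac{\sgn(\sigma)+1}{2}$ differ by exactly $1$ modulo $2$. For the other term, I need to understand $|\sigma'(A_X)\cap A_Y| - |\sigma(A_X)\cap A_Y| \pmod 2$. Since $\sigma'$ and $\sigma$ agree on $V(X)\setminus\{a,b\}$, the only possible change to the set $\sigma(A_X)$ comes from whether $a$ and $b$ lie in $A_X$ or $B_X$. Here is where I use that $\{a,b\}\in E(X)$ and $\{A_X,B_X\}$ is a bipartition: $a$ and $b$ lie in opposite parts, so without loss of generality $a\in A_X$ and $b\in B_X$. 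Then $\sigma(A_X)$ and $\sigma'(A_X)$ differ only in that $\sigma(a)$ is replaced by $\sigma(b)$ (because $\sigma'(a)=\sigma(b)$, $\sigma'(b)=\sigma(a)$, and $b\notin A_X$ so $\sigma'(b)=\sigma(a)$ is not counted in $\sigma'(A_X)$ while $\sigma(a)$ was in $\sigma(A_X)$). Now I invoke that $\{\sigma(a),\sigma(b)\}\in E(Y)$ and $\{A_Y,B_Y\}$ is a bipartition of $Y$: the vertices $\sigma(a)$ and $\sigma(b)$ lie in opposite parts of $Y$. Therefore exactly one of $\sigma(a),\sigma(b)$ lies in $A_Y$, which means replacing $\sigma(a)$ by $\sigma(b)$ in the image of $A_X$ changes the count $|\cdot\cap A_Y|$ by exactly $\pm 1$, i.e.\ by $1$ modulo $2$.

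Combining the two computations, both summands defining $p$ flip modulo $2$ across the edge $e$, so their sum is unchanged: $p(\sigma')=p(\sigma)$. Hence $p$ is constant along any walk in $\FS(X,Y)$, so if $p(\tau)\neq p(\tau')$ then $\tau$ and $\tau'$ cannot be joined by a walk and lie in different connected components.

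The only genuinely delicate point is the bookkeeping for $|\sigma(A_X)\cap A_Y|$ under the swap — one has to be careful about the four cases according to which of $A_X,B_X$ contains $a$ versus $b$ (the edge hypothesis kills two of them) and then about which of $A_Y,B_Y$ contains $\sigma(a)$ versus $\sigma(b)$ (the edge-in-$Y$ hypothesis again kills two). I would present this cleanly by fixing the labels $a\in A_X$, $b\in B_X$ at the outset and noting $\sigma'(A_X) = \big(\sigma(A_X)\setminus\{\sigma(a)\}\big)\cup\{\sigma(b)\}$, with $\sigma(a)\neq\sigma(b)$, so that the parity change of the intersection with $A_Y$ equals $\mathbf{1}[\sigma(b)\in A_Y] - \mathbf{1}[\sigma(a)\in A_Y] \equiv 1\pmod 2$ precisely because $\sigma(a)$ and $\sigma(b)$ are separated by the bipartition of $Y$. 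Everything else is immediate.
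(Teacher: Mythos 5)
Your proof is correct, and it is the standard parity-invariance argument: show that a single friendly swap flips both $\tfrac{\sgn(\sigma)+1}{2}$ and $|\sigma(A_X)\cap A_Y|$ modulo $2$ (using the two bipartiteness hypotheses, one for each term), so $p$ is constant on connected components. This paper does not reprove the proposition (it cites \cite{friends}), and the argument there is the same one you give.
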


\subsection{Wilson's Theorem}\label{subsec:Wilson}
Given a graph $G$ and a vertex $v\in V(G)$, we write $G\setminus v$ for the graph obtained from $G$ by deleting $v$ (and all edges incident to $v$). We say $v$ is a \dfn{cut vertex} of $G$ if $G$ is connected and $G\setminus v$ is disconnected. We say $G$ is \dfn{biconnected} if it is connected and has no cut vertices. Wilson noted that if $n\geq 3$ and $Y$ is an $n$-vertex graph that is not biconnected, then $\FS(\Star_n,Y)$ is disconnected. Thus, he focused his attention on friends-and-strangers graphs of stars and biconnected graphs. The statement of his main theorem involves the exceptional graph 
\[\theta_0=
\begin{array}{l}
\begin{tikzpicture}[line width=1pt, x=0.15cm,y=0.15cm]
\tikzstyle{dot}=[circle,thin,draw,fill=black,inner sep=1.5pt]
\pgfdeclarelayer{nodelayer}
\pgfdeclarelayer{edgelayer}
\pgfsetlayers{edgelayer,nodelayer}
	\begin{pgfonlayer}{nodelayer}
		\node [style=dot] (0) at (0, 0) {};
		\node [style=dot] (1) at (6, 0) {};
		\node [style=dot] (2) at (-3, 5.2) {};
		\node [style=dot] (3) at (0, 10.4) {};
		\node [style=dot] (4) at (6, 10.4) {};
		\node [style=dot] (5) at (9, 5.2) {};
		\node [style=dot] (6) at (3, 5.2) {};
		\node [style=dot] (7) at (-3, 5.2) {};
	\end{pgfonlayer}
	\begin{pgfonlayer}{edgelayer}
		\draw (7) to (6);
		\draw (0) to (7);
		\draw (7) to (3);
		\draw (3) to (4);
		\draw (4) to (5);
		\draw (5) to (6);
		\draw (0) to (1);
		\draw (1) to (5);
	\end{pgfonlayer}
\end{tikzpicture}
\end{array}.
\]

\begin{theorem}[\cite{wilson}]\label{thm:wilson}
Let $Y$ be a biconnected graph on $n\geq3$ vertices that is not isomorphic to $\theta_0$ or $\Cycle_n$. If $Y$ is not bipartite, then $\FS(\Star_n, Y )$ is connected. If $Y$ is bipartite, then $\FS(\Star_n, Y )$ has
exactly $2$ connected components, each with $n!/2$ vertices. The graph $\FS(\Star_7, \theta_0 )$ has exactly $6$ connected
components.
\end{theorem}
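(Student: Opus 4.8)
The plan is to pass to the classical language of the \emph{puzzle group}. Identifying $\Star_n$ with a center $c$ joined to $n-1$ leaves, a bijection $\sigma\colon V(\Star_n)\to V(Y)$ amounts to placing $n-1$ distinctly labeled tokens on the vertices of $Y$ with one vertex $v=\sigma(c)$ left empty (the ``hole''), and an $(\Star_n,Y)$-friendly swap simply slides a token from a neighbor of $v$ into $v$. So $\FS(\Star_n,Y)$ is exactly the token-sliding graph (generalized $15$-puzzle) of $Y$. Fixing a basepoint vertex $v_0$, I would let $G\leq\mathfrak S_{n-1}$ be the group of token-permutations realized by closed walks of the hole based at $v_0$. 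Because $Y$ is connected the hole can be routed between any two vertices, and a short argument shows that the connected component of any configuration consists of exactly $n\cdot|G|$ configurations; hence the number of components of $\FS(\Star_n,Y)$ is $(n-1)!/|G|$. In particular $\FS(\Star_n,Y)$ is connected iff $G=\mathfrak S_{n-1}$, and it has exactly two components of size $n!/2$ iff $G=A_{n-1}$. Everything then reduces to computing $G$.

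Next I would pin down generators of $G$: moving the hole once around a cycle $C$ of $Y$ of length $\ell$ (after first conjugating by a path carrying the hole from $v_0$ onto $C$) cyclically rotates the $\ell-1$ tokens on the non-hole vertices of $C$, i.e.\ realizes an $(\ell-1)$-cycle. A standard argument—decomposing an arbitrary closed hole-walk through the cycle space of $Y$, which is spanned by cycles since $Y$ is $2$-connected—shows that $G$ is generated by these cycle-rotations and their conjugates. Two consequences are then immediate. \emph{Parity:} if $Y$ is bipartite every cycle has even length $\ell$, so each generator is an $(\ell-1)$-cycle with $\ell-1$ odd, hence an even permutation, forcing $G\leq A_{n-1}$; if $Y$ is non-bipartite it has an odd cycle, producing an odd permutation in $G$, so $G\not\leq A_{n-1}$. \emph{Transitivity (indeed primitivity):} since $Y$ is $2$-connected, any two vertices lie on a common cycle, and composing cycle-rotations one checks that $G$ acts transitively, and in fact primitively, on the $n-1$ tokens.

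The heart of the proof—and the step I expect to be the main obstacle—is to show that, apart from the genuine exceptions, $G$ contains a $3$-cycle. Once that is done, Jordan's theorem (a primitive subgroup of $\mathfrak S_{n-1}$ containing a $3$-cycle contains $A_{n-1}$) gives $G\supseteq A_{n-1}$, and combining with the parity dichotomy finishes: $G=\mathfrak S_{n-1}$ when $Y$ is non-bipartite (so $\FS(\Star_n,Y)$ is connected) and $G=A_{n-1}$ when $Y$ is bipartite (so there are exactly $2$ components of size $n!/2$). To produce a $3$-cycle I would use the fact that a $2$-connected graph is either a cycle $\Cycle_n$—excluded, and indeed then the only cycle is all of $Y$, $G$ is cyclic of order $n-1$, and $\FS(\Star_n,\Cycle_n)$ has $(n-2)!$ components—or else contains a subdivided theta subgraph: two vertices joined by three internally disjoint paths of edge-lengths $a,b,c$, yielding cycles of lengths $a+b$, $b+c$, $a+c$. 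Working inside the theta, where token positions can be tracked explicitly, one composes the three corresponding cycle-rotations to manufacture a $3$-cycle, unless $(a,b,c)$ falls into a short list of degenerate triples that must be handled by hand.

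That case analysis is exactly what isolates the true exception $\theta_0$: for this graph the explicit computation shows the puzzle group is the exotic transitive copy of $\mathfrak S_5$ inside $\mathfrak S_6$ (order $120$, containing no $3$-cycle, consistently with Jordan), whence $\FS(\Star_7,\theta_0)$ has $6!/120=6$ components. The delicate part is the bookkeeping in this theta-graph step—verifying that the construction of a $3$-cycle succeeds for every non-cyclic $Y$ other than $\theta_0$, and computing the exceptional group for $\theta_0$ itself; the rest of the argument is formal once the puzzle-group dictionary is set up.
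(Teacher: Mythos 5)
The paper does not prove this theorem: it is Wilson's 1974 result, cited verbatim from \cite{wilson} and used as a black box. There is therefore no ``paper's proof'' to compare against, and your task was effectively to reconstruct Wilson's original argument.

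Your sketch does track the classical proof closely. The puzzle-group dictionary (identifying $\FS(\Star_n,Y)$ with the token-sliding graph, the hole at $\sigma(c)$, components of size $n\cdot|G|$, count $(n-1)!/|G|$), the observation that generators come from running the hole around cycles (yielding $(\ell-1)$-cycles, hence the bipartite/parity dichotomy), the reduction to exhibiting a $3$-cycle, the theta-subgraph case analysis, and the identification of the exceptional $\theta_0$ group as a transitive $\mathfrak S_5\cong\mathrm{PGL}(2,5)$ inside $\mathfrak S_6$ of order $120$ (giving $6!/120=6$ components) are all the right ingredients and in the right order. Two places in your outline are doing more work than they admit, and are exactly where Wilson's paper spends its effort: (i) \emph{primitivity} of $G$ on the $n-1$ tokens is asserted but not argued --- it is not a formal consequence of $2$-connectivity of $Y$ (for $Y=\Cycle_n$ the group is regular cyclic of order $n-1$, primitive only when $n-1$ is prime), and Wilson establishes the needed transitivity properties via a careful theta-graph analysis rather than invoking Jordan's theorem off the shelf; (ii) ``a short list of degenerate triples handled by hand'' is precisely the technical core of the theorem, since it is this case analysis that produces $\theta_0$ as the unique sporadic exception. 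These are not errors, but they are the hard steps, and a complete write-up would have to supply them. As a high-level reconstruction of a theorem the paper only cites, your outline is essentially correct.
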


\begin{remark}\label{rem:Star_Cycle}
It is straightforward to describe the connected components of $\FS(\Star_n,\Cycle_n)$. For each vertex $\sigma$ of $\FS(\Star_n,\Cycle_n)$, we can read off the leaves of $\Star_n$ in the clockwise order that their images under $\sigma$ appear around $\Cycle_n$. This defines a cyclic ordering of the set of leaves of $\Star_n$. It is straightforward to see that two permutations are in the same connected component of $\FS(\Star_n,\Cycle_n)$ if and only if they induce the same cyclic ordering of the leaves of $\Star_n$. 
\end{remark}

\section{General Theorems on Spiders}\label{Sec:General_Spiders}

Our goal in this section is to prove Theorems~\ref{thm:spider_sufficient}, \ref{thm:Wilsonian}, and \ref{thm:spider_necessary}, which together imply Corollary~\ref{thm:dandelions}. We begin with Theorem~\ref{thm:Wilsonian} since its proof does not require any further preliminary results. 

\begin{proof}[Proof of Theorem~\ref{thm:Wilsonian}]
Suppose $n\geq 2k-1$, and let $Y$ be an $n$-vertex graph such that every $k$-vertex induced subgraph of $Y$ is connected. We will use induction on $\ell$ to show that for each $\ell\in\{3,\ldots,k\}$, there exists a biconnected $\ell$-vertex induced subgraph $Y^{(\ell)}$ of $Y$ that contains a triangle. The proof will then follow by setting $Y_0=Y^{(k)}$. Indeed, if $\ell=3$, then $\FS(\Star_3,Y_0)=\FS(\Star_3,K_3)$ is connected, and if $\ell\geq 4$, then Theorem~\ref{thm:wilson} guarantees that $\FS(\Star_k,Y_0)$ is connected (since $Y^{(k)}$ is biconnected, not bipartite, not a cycle, and not $\theta_0$). 

Because every $k$-vertex induced subgraph of $Y$ is connected, we see that every vertex in $Y$ has degree at least $n-k+1$. The number of edges in $Y$ is half the sum of the degrees of the vertices in $Y$, which is at least $n(n-k+1)/2>n^2/4$. Therefore, it follows from Tur\'an's theorem that $Y$ contains a triangle $Y^{(3)}$. This completes the base case of our induction. 

Now suppose $\ell\in\{4,\ldots,k\}$, and assume inductively that there exists a biconnected $(\ell-1)$-vertex induced subgraph $Y^{(\ell-1)}$ of $Y$ that contains a triangle. We claim that the number of edges between vertices in $V(Y^{(\ell-1)})$ and vertices in $V(Y)\setminus V(Y^{(\ell-1)})$ is at least $n-\ell+2$. Since $|V(Y)\setminus V(Y^{(\ell-1)})|=n-\ell+1$, it will then follow from the pigeonhole principle that there exists a vertex $v\in V(Y)\setminus V(Y^{(\ell-1)})$ that has at least two neighbors in $Y^{(\ell-1)}$. We can then simply set $Y^{(\ell)}$ to be the induced subgraph of $Y$ whose vertex set is $V(Y^{(\ell-1)})\cup\{v\}$; it is easy to see that this induced subgraph is biconnected and contains a triangle (the same triangle as in $Y^{(\ell-1)}$). 

To prove that claim, we use the fact, which we mentioned before, that each vertex in $Y$ has degree at least $n-k+1$. Since $Y^{(\ell-1)}$ has $\ell-1$ vertices, this implies that each vertex in $Y^{(\ell-1)}$ has at least $n-k+1-(\ell-2)=n-k-\ell+3$ neighbors in $V(Y)\setminus V(Y^{(\ell-1)})$. it follows that there are at least $(\ell-1)(n-k-\ell+3)$ edges between vertices in $V(Y^{(\ell-1)})$ and vertices in $V(Y)\setminus V(Y^{(\ell-1)})$. Thus, to prove the claim, we must show that $(\ell-1)(n-k-\ell+3)\geq n-\ell+2$. This inequality rearranges to $p(\ell)\geq 0$, where $p(x)=-x^2+(n-k+5)x-(2n-k+5)$. We will show that $p(3)\geq 0$ and $p(k)\geq 0$; since $p(x)$ is a quadratic polynomial and $3\leq \ell\leq k$, this will imply that $p(\ell)\geq 0$. First, we have $p(3)=n-2k+1\geq 0$ by hypothesis. On the other hand, $p(k)=q(k)$, where $q(x)$ is the quadratic polynomial $-2x^2+(n+6)x-(2n+5)$. To see that $q(k)\geq 0$, we use the fact that $3\leq k\leq (n+1)/2$ and compute that $q(3)=n-5\geq 0$ and $q((n+1)/2)=(n-5)/2\geq 0$.
\end{proof}

We now move on to proving Theorems~\ref{thm:spider_sufficient}, and \ref{thm:spider_necessary}, for which we need the following lemmas. 

\begin{lemma}\label{thm:spider_sufficient_helper1}
Fix an integer $n \geq 3$, and let $Y$ be a biconnected graph with $n$ vertices. Let $\sigma$ be a vertex of $\FS(\Star_n,Y)$, and fix $x\in V(\Star_n)$ and $y\in V(Y)$. There exists a vertex $\sigma'$ in the same connected component of $\FS(\Star_n,Y)$ as $\sigma$ such that $\sigma'(x) = y$.

\end{lemma}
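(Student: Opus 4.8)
The plan is to show that, starting from $\sigma$, we can reach a vertex in the same connected component that sends $x$ to $y$, using only $(\Star_n,Y)$-friendly swaps. Write $c$ for the center of $\Star_n$; every edge of $\Star_n$ is of the form $\{c,\ell\}$ for a leaf $\ell$. There are two cases depending on whether $x$ is the center or a leaf. If $x=c$: we want to bring the person $y$ to the center chair. Let $z=\sigma^{-1}(y)$; if $z=c$ we are done, so assume $z$ is a leaf. Since $Y$ is connected, there is a path $y=y_0,y_1,\dots,y_m=\sigma(c)$ in $Y$ from $y$ to the person currently sitting in the center. I would then perform swaps along the center edge that successively move $y$ toward the center: informally, rotate the occupants of the center and of the leaf chairs holding $y_1,\dots,y_m$ so that $y$ ends up at the center. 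Concretely, the point is that in $\FS(\Star_n, Y)$ the center chair can exchange its occupant with any leaf chair whose occupant is a friend of the center occupant, and by chaining such exchanges along the $Y$-path we can deliver $y$ to the center. If $x=\ell$ is a leaf: first use the center case to reach a configuration with $y$ at the center, then (if $y\ne \sigma(\ell)$ at that point, and noting $\{y,\sigma(\ell)\}$ need not be an edge of $Y$) route a different person into $\ell$ so as to free things up — but more cleanly, I would reduce directly to the following cleaner statement.

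The cleanest route uses the hypothesis that $Y$ is biconnected, which is stronger than connected, and this is exactly where biconnectedness is needed. Here is the reduction I would actually carry out. It suffices to prove: for any leaf $\ell$ and any vertex $w\in V(Y)$ with $w\ne\sigma(c)$, there is a vertex $\sigma'$ in $\sigma$'s component with $\sigma'(\ell)=w$ — because then, to achieve $\sigma'(x)=y$ when $x$ is a leaf, we first move some person $\ne y$ to the center if necessary, then apply this to $(\ell,w)=(x,y)$; and when $x=c$ we handle it by first parking $y$ on some leaf and then doing one more swap. To prove the boxed sub-statement, take the path/connectivity structure of $Y$: since $Y$ is biconnected, for the two vertices $w$ and $\sigma(\ell)$ there is a path in $Y$ from $w$ to $\sigma(\ell)$ avoiding the single vertex $\sigma(c)$ (this is the standard consequence of biconnectivity: between any two vertices there is a path avoiding any third vertex — or more precisely one can route around $\sigma(c)$ because deleting it keeps $Y$ connected). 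Walking $w$ along that path using the center edge, keeping $\sigma(c)$ fixed at the center throughout until the very last step, brings $w$ to chair $\ell$; each intermediate swap is friendly because we only ever swap the center occupant with a leaf occupant that is its $Y$-neighbor, and the path was chosen to make these adjacencies available.

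The main obstacle is making precise the "walk a person along a $Y$-path using only center–leaf swaps" maneuver and verifying that every swap along the way is genuinely $(\Star_n,Y)$-friendly. The subtlety is that moving person $y_i$ from its leaf to the center requires that the current center occupant be a $Y$-friend of $y_i$; after such a swap the old center occupant sits on $y_i$'s former leaf, and one must argue we can keep pushing along the path without getting stuck. I expect the right bookkeeping is an induction on the length of the chosen path in $Y$: at each step the center occupant is the next path vertex $y_{i-1}$, which is adjacent in $Y$ to $y_i$ by construction of the path, so the swap $(y_{i-1}\ \ y_i)$ across the center edge $\{c, \sigma^{-1}(y_i)\}$ is friendly; performing it advances the "distinguished person" one step along the path while leaving all other leaf occupants untouched. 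Biconnectedness enters precisely to guarantee the needed path in $Y$ exists and can be chosen to avoid whichever vertex we need to hold fixed; for $n=3$ the only biconnected graph is $\Cycle_3=K_3$ and the statement is immediate, which is a convenient base/degenerate case to check separately.
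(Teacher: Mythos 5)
The core of your proposal is the ``boxed sub-statement'' together with the claim that you can \emph{walk $w$ along a $Y$-path using the center edge, keeping $\sigma(c)$ fixed at the center throughout until the very last step}. This step is not actually a valid maneuver: every edge of $\FS(\Star_n,Y)$ is a swap between the center chair and a leaf chair, so any nontrivial sequence of friendly swaps necessarily evicts the current center occupant. The rotation maneuver you correctly describe (shift $y_{i-1}$ out of the center and $y_i$ into it, one $Y$-edge at a time) proves that you can bring any prescribed person $y$ \emph{to the center} using only the connectedness of $Y$; but it does not give you control over which leaf $y$ is then deposited on, nor does it let you ``route around'' a vertex while holding someone still at the center. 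You flag this yourself as ``the main obstacle,'' and the induction you sketch only re-establishes the bring-to-center maneuver, not the harder put-on-a-prescribed-leaf claim. Concretely, when $Y=\Cycle_n$ (which is biconnected), the accessible arrangements are exactly those preserving a fixed cyclic order of the leaf labels (Remark~\ref{rem:Star_Cycle}), and a naive ``slide $w$ toward $\ell$'' gets stuck as soon as the center occupant and the occupant of $\ell$ are non-adjacent; escaping this requires a global rearrangement of the other leaves, not a local path-walk.

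The reason the gap is not easily patched is that a direct constructive proof of this lemma is essentially a re-derivation of (a portion of) Wilson's theorem, which is genuinely nontrivial. The paper avoids this entirely: it invokes Theorem~\ref{thm:wilson} to pin down the number of components, treating the exceptional cases $\Cycle_n$ (via Remark~\ref{rem:Star_Cycle}) and $\theta_0$ (by hand) separately; in the non-bipartite, non-exceptional case the graph $\FS(\Star_n,Y)$ is already connected and there is nothing to prove; and in the bipartite case, where there are exactly two components of size $n!/2$, it picks two permutations $\tau$ and $\tau'=\tau\circ(u\,\,v)$ with $u,v$ leaves distinct from $x$, both sending $x\mapsto y$, and uses the parity invariant of Proposition~\ref{prop:bipartite} to show $\tau,\tau'$ lie in different components --- so one of them must be in $\sigma$'s component. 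If you want to salvage your constructive outline, you would need to replace the ``keep $\sigma(c)$ at the center'' idea with a genuine $3$-cycle/commutator argument à la the $15$-puzzle, and separately handle the bipartite, cycle, and $\theta_0$ cases; it is much cleaner to reduce to Wilson's theorem as the paper does.
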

\begin{proof}
If $Y$ is a cycle, then the desired result follows easily from Remark~\ref{rem:Star_Cycle}. If $n=7$ and $Y$ is the exceptional graph $\theta_0$ shown in Section~\ref{subsec:Wilson}, then we can check by hand that the desired result still holds. Now suppose $Y$ is not a cycle or $\theta_0$. If $Y$ is not bipartite, then Theorem~\ref{thm:wilson} tells us that $\FS(X,Y)$ is connected, so the desired result is obvious. 

Now suppose $Y$ is bipartite. Since $Y$ is biconnected, we must have $n\geq 4$. We may assume $V(\Star_n)=V(Y)=[n]$; let $\{A_Y,B_Y\}$ be a bipartition of $Y$. We assume that $n$ is the center of $\Star_n$ so that $\{[n-1],\{n\}\}$ is a bipartiton of $\Star_n$. For each $\mu\in V(\FS(\Star_n,Y))$, let $p(\mu)=|\mu([n-1])\cap A_Y|+\dfrac{\sgn(\mu)+1}{2}\pmod{2}$, as in Proposition~\ref{prop:bipartite}. Choose $u,v\in [n-1]\setminus\{x\}$. Let $\tau$ be a vertex in $\FS(X,Y)$ such that $\tau(x)=y$, and let $\tau'=\tau\circ(u\,\,v)$. Then $\tau'(x)=y$. We have $p(\tau)\neq p(\tau')$, so it follows from Proposition~\ref{prop:bipartite} that $\tau$ and $\tau'$ are in different connected components of $\FS(\Star_n,Y)$. Theorem~\ref{thm:wilson} tells us that $\FS(\Star_n,Y)$ has exactly $2$ connected components, so we can take $\sigma'$ to be whichever of the vertices $\tau$ or $\tau'$ is in the same connected component as $\sigma$. 
\end{proof}

\begin{lemma}\label{thm:spider_sufficient_helper2}
Let $X$ and $Y$ be connected $n$-vertex graphs such that $X$ has maximum degree $k\geq 2$. Suppose every induced subgraph of $Y$ with $k$ vertices is connected. Let $\sigma$ be a vertex of $\FS(X,Y)$, and fix $x\in V(X)$ and $y\in V(Y)$. There exists a vertex $\sigma'$ in the same connected component of $\FS(X,Y)$ as $\sigma$ such that $\sigma'(x) = y$. 
\end{lemma}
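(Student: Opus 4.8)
The plan is to reduce the statement about $\FS(X,Y)$ to the corresponding statement about stars, which is exactly Lemma~\ref{thm:spider_sufficient_helper1}. The key point is that around any vertex $v$ of $X$, the closed neighborhood of $v$ together with the induced subgraph of $Y$ on the images of those neighbors behaves like a star-versus-biconnected-graph system, so we can freely cycle the person sitting at $v$ among the chairs adjacent to $v$.

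First I would fix some vertex $\sigma$ of $\FS(X,Y)$ and a target pair $x \in V(X)$, $y \in V(Y)$, and let $y_0 = \sigma(x)$ be the person currently at chair $x$. Since $X$ is connected, choose a path $x = x_0, x_1, \ldots, x_m$ in $X$ with $\sigma(x_m) = y$ (i.e., $x_m = \sigma^{-1}(y)$). I would then show that it suffices to handle a single edge: it is enough to prove that for any edge $\{a,b\} \in E(X)$ and any vertex $\tau$ of $\FS(X,Y)$, there is a vertex $\tau'$ in the same connected component with $\tau'(a) = \tau(b)$ and $\tau'$ agreeing with $\tau$ outside a controlled set — more precisely, I want to "transport" the person at $b$ to chair $a$ without caring what happens elsewhere. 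Applying this repeatedly along the path $x_0, \ldots, x_m$ (transporting $y$ step by step from chair $x_m$ to chair $x_0 = x$) yields the desired $\sigma'$.

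The crux is the single-edge transport claim, and here is where the hypotheses enter. Fix an edge $\{a,b\}\in E(X)$ and a current arrangement $\tau$. Pick a vertex $v$ of $X$ of maximum degree $k$, let $N = N_X(v) \cup \{v\}$ be its closed neighborhood (size exactly $k+1$ if $\deg_X(v)=k$... but we actually want a set of size $k$). Slightly more carefully: the mechanism is that if $w\in V(X)$ has $\deg_X(w) = d$, then the $d$ chairs adjacent to $w$, together with chair $w$ itself, carry a copy of $\FS(\Star_{d+1}, Z)$ where $Z$ is the induced subgraph of $Y$ on $\{\tau(u): u \in N_X[w]\}$; this $Z$ has $d+1 \le k+1$ vertices. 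We cannot directly invoke Lemma~\ref{thm:spider_sufficient_helper1} because that lemma needs $Z$ biconnected, and our hypothesis only guarantees that $k$-vertex induced subgraphs of $Y$ are connected. The fix is standard: to move the person at $b$ into chair $a$ using the star at $a$ (or at $b$), I would enlarge or restrict the relevant vertex set so that the induced subgraph of $Y$ in play has exactly $k$ vertices and hence is connected, and then observe that a connected $k$-vertex graph on which we act by star-swaps still lets us route one designated token to one designated chair — a connected graph always admits such routing of a single token (push the token along a path toward the target, temporarily displacing the occupant of the next chair, which is always possible since the "hole" argument for a single token only needs connectedness, not biconnectedness). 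So I would prove a small auxiliary fact: if $Z$ is a connected graph on $m$ vertices and $\rho$ is a vertex of $\FS(\Star_m, Z)$, then for any leaf $\ell$ and any $y\in V(Z)$ there is $\rho'$ in the same component with $\rho'(\ell) = y$ — and more usefully, the center can be brought to any desired person. Then chaining these local moves along a path in $X$ does the job.

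The main obstacle will be bookkeeping the local-to-global step cleanly: when I transport the person $y$ one chair closer to $x$ along the path, I must make sure the swaps I perform are genuine $(X,Y)$-friendly swaps (the relevant edges really lie in $E(X)$ and the relevant pairs really lie in $E(Y)$), and that the choice of the $k$-vertex window in $Y$ can always be made to contain the two people who need to swap. This is where connectedness of $X$ (to get the path) and the "every $k$-vertex induced subgraph of $Y$ is connected" hypothesis (to guarantee the local star-system is connected enough to route one token) are both used; everything else is routine. I expect the argument to run almost in parallel to the proof of Lemma~\ref{thm:spider_sufficient_helper1}, with Lemma~\ref{thm:spider_sufficient_helper1} (or its star-routing consequence) invoked as the engine at each step.
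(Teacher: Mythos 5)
Your plan is genuinely different from the paper's proof, and it contains a gap that I don't think is repairable within the framework you set up. The paper instead argues by induction on $n$: after reducing to the case that $X$ is a tree, it fixes a vertex $c$ of degree $k$, locates a leaf $u$ of $X$ outside the closed neighborhood $N[c]$, deletes $u$ from $X$ and $\sigma(u)$ from $Y$, and invokes the inductive hypothesis on the smaller pair. This keeps the maximum degree equal to $k$ and preserves the hypothesis on $Y$ (every $k$-vertex induced subgraph of $Y\setminus\sigma(u)$ is still a $k$-vertex induced subgraph of $Y$), and the base case is $X=\Star_{k+1}$, where your Lemma~\ref{thm:spider_sufficient_helper1} applies directly.

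The gap in your route is the size mismatch that you yourself flag and then wave at. To move the token $y$ one step from $x_i$ to $x_{i-1}$ along your chosen path, you want to use the local star centered at $x_{i-1}$; its leaves are $N_X(x_{i-1})$, and the relevant graph $Z$ is the induced subgraph of $Y$ on the $d+1$ people currently occupying $N_X[x_{i-1}]$, where $d=\deg_X(x_{i-1})\le k$. But when $d+1<k$ the hypothesis gives you no control over $Z$: the condition ``every $k$-vertex induced subgraph of $Y$ is connected'' says nothing about induced subgraphs on fewer than $k$ vertices, and in general they can be disconnected. A concrete example: take $Y=\overline{\Cycle_n}$ with $n\ge 7$ and $k=4$. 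Then every $4$-vertex induced subgraph of $Y$ is connected, but any three consecutive vertices of $\Cycle_n$ induce $\overline{\Path_3}$, an edge plus an isolated vertex. So if your path through $X$ passes through a degree-$2$ vertex (which happens in almost all spiders and dandelions, the very cases this lemma is built for), the local $Z$ you need to be connected can fail to be. ``Enlarging'' $N_X[x_{i-1}]$ to a $k$-element set is not an option, because the chairs adjacent to $x_{i-1}$ are fixed by $X$ and the extra chairs you might add are not all adjacent to $x_{i-1}$, so the enlarged system is no longer a star and your routing lemma no longer applies.

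Two smaller points. First, the auxiliary fact you state — that in $\FS(\Star_m,Z)$ with $Z$ merely connected you can place any person $y$ at any \emph{leaf} — is false: in $\FS(\Star_3,\Path_3)$, starting from $123$ (center has $1$), the reachable permutations are $\{123,213,312\}$, so $\ell_2$ never holds person $1$. What is true, and what your transport actually needs, is that the \emph{center} can be made to hold any desired person when $Z$ is connected; that part of your reasoning is fine. Second, even granting the local routing, the single-edge transport claim as you phrase it (``move the person at $b$ to chair $a$, not caring what happens elsewhere'') is the right weakening to aim for, and chaining it along a path is sound bookkeeping — so the structure of your reduction is reasonable. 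The fatal issue is solely the degree/size mismatch. To fix it you would need some way to guarantee connectivity of small induced subgraphs of $Y$ near low-degree vertices of $X$, and the hypothesis simply does not provide that; the paper's leaf-deletion induction sidesteps the problem entirely by never needing to route through a low-degree local neighborhood.
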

\begin{proof}

If $T$ is a spanning tree of $X$, then $\FS(T,Y)$ is a subgraph of $\FS(X,Y)$. Therefore, it suffices to prove the lemma when $X$ is a tree; we assume that this is the case in what follows. Note that the result is trivial if $\sigma(x)=y$. Since $Y$ is connected, it suffices to prove the result when $\{\sigma(x),y\}$ is an edge in $Y$; we assume that this is the case in what follows. We proceed by induction on $n$. 

Let $c$ be a vertex of $X$ of degree $k$, and let $N[c]$ denote the closed neighborhood of $c$ (i.e., the set of vertices that are adjacent to or equal to $c$). If $X$ is a star, then $n=k+1$, so the hypothesis that every $k$-vertex induced subgraph of $Y$ is connected is equivalent to the fact that $Y$ is biconnected. Thus, the desired result follows immediately from Lemma~\ref{thm:spider_sufficient_helper1}. Note that this handles the base case $n=3$. In what follows, we may assume $X$ is not a star. We consider three cases. 

\medskip
\noindent {\bf Case 1.} Suppose $X$ has a leaf $u$ such that $u\not\in N[c]$, $u\neq x$, and $u\neq\sigma^{-1}(y)$. Let $X'=X\setminus u$ and $Y'=Y\setminus \sigma(u)$. Let $\sigma\vert_{X'}$ be the restriction of $\sigma$ to $V(X')$. Since $u$ is a leaf of $X$ and $N[c]\subseteq V(X')$, the graph $X'$ is a tree with $n-1$ vertices and maximum degree $k$. Furthermore, every $k$-vertex induced subgraph of $Y'$ is connected.  Therefore, it follows by induction that there exists a sequence of $(X',Y')$-friendly swaps that transforms $\sigma\vert_{X'}$ into a vertex $\mu$ of $\FS(X',Y')$ such that $\mu(x)=y$. Let $\sigma'$ be the unique vertex in $\FS(X,Y)$ such that $\sigma'(u)=\sigma(u)$ and $\sigma'(v)=\mu(v)$ for all $v\in V(X')$. Then $\sigma'(x)=y$. The sequence of $(X',Y')$-friendly swaps transforming $\sigma\vert_{X'}$ into $\mu$ can be interpreted as a sequence of $(X,Y)$-friendly swaps that transforms $\sigma$ into $\sigma'$. Thus, $\sigma'$ is in the same connected component of $\FS(X,Y)$ as $\sigma$. 

\medskip
\noindent {\bf Case 2.} Suppose that $x$ is a leaf of $X$ and $x\not\in N[c]$. Let $x'$ be the neighbor of $x$ in $X$. Let $X'=X\setminus x$ and $Y'=Y\setminus \sigma(x)$. Let $\sigma\vert_{X'}$ be the restriction of $\sigma$ to $V(X')$. The graph $X'$ is a tree with $n-1$ vertices and maximum degree $k$, and every $k$-vertex induced subgraph of $Y'$ is connected.  It follows by induction (and the assumption $\sigma(x)\neq y$) that there exists a sequence of $(X',Y')$-friendly swaps that transforms $\sigma\vert_{X'}$ into a vertex $\mu$ of $\FS(X',Y')$ such that $\mu(x')=y$. Let $\tau$ be the unique vertex in $\FS(X,Y)$ such that $\tau(x)=\sigma(x)$ and $\tau(v)=\mu(v)$ for all $v\in V(X')$. In particular, $\tau(x')=y$. The sequence of $(X',Y')$-friendly swaps transforming $\sigma\vert_{X'}$ into $\mu$ can be interpreted as a sequence of $(X,Y)$-friendly swaps that transforms $\sigma$ into $\tau$. Thus, $\tau$ is in the same connected component of $\FS(X,Y)$ as $\sigma$. Now let $\sigma'=\tau\circ(x\,\,x')$. Since $\{x,x'\}\in E(X)$ and $\{\tau(x),\tau(x')\}=\{\sigma(x),y\}\in E(Y)$, the vertices $\tau$ and $\sigma'$ are adjacent in $\FS(X,Y)$. Then $\sigma'$ is in the same connected component as $\sigma$ and satisfies $\sigma'(x)=y$. 

\medskip
\noindent {\bf Case 3.} Suppose that there is a leaf $u$ of $X$ such that $u\not\in N[c]$ and $\sigma(u)=y$. We have assumed that $\sigma(x)\neq y$, so $u\neq x$. Let $y'$ be a neighbor of $y$ in $Y$. Since $\{\sigma(u),y'\}$ is an edge in $Y$, we can repeat the argument in Case 2 with $x$ replaced by $u$ and $y$ replaced by $y'$; this allows us to deduce that there is a vertex $\widetilde\sigma$ of $\FS(X,Y)$ in the same connected component as $\sigma$ such that $\widetilde\sigma(u)=y'$. But now observe that $u$ is a leaf of $X$ such that $u\not\in N[c]$, $u\neq x$, and $u\neq\sigma^{-1}(y)$; this means that we can repeat the argument from Case 1 with $\sigma$ replaced by $\widetilde\sigma$ to see that there is a vertex $\sigma'$ in the same connected component of $\FS(X,Y)$ as $\widetilde\sigma$ such that $\sigma'(x)=y$. But then $\sigma'$ is also in the same connected component of $\FS(X,Y)$ as $\sigma$.
\end{proof}

We can now complete the proofs of Theorems~\ref{thm:spider_sufficient} and~\ref{thm:spider_necessary}. 

\begin{proof}[Proof of Theorem~\ref{thm:spider_sufficient}]
Let $n,k,X,Y,Y_0$ be as in the statement of the theorem; we want to prove that $\FS(X,Y)$ is connected. If $T$ is a spanning tree of $X$, then $\FS(T,Y)$ is a subgraph of $\FS(X,Y)$. Therefore, we may assume in what follows that $X$ is a tree. We may also assume that $n\geq 4$ since the case when $n=3$ can be checked by hand.

First, suppose $n=k+1$. In this case, $X$ is isomorphic to $\Star_n$. The hypothesis that every $k$-vertex induced subgraph of $Y$ is connected is equivalent to the fact that $Y$ is biconnected. Since $\FS(\Star_k,Y_0)$ is connected, it follows from the discussion in Section~\ref{subsec:Wilson} that the graph $Y$ cannot be a cycle or the exception graph $\theta_0$; indeed, if it were, then $Y_0$ would either not be biconnected or would be a cycle. By Theorem~\ref{thm:wilson} (or Proposition~\ref{prop:bipartite}), the hypothesis that $\FS(\Star_k,Y_0)$ is connected guarantees that $Y_0$ is not bipartite, which implies that $Y$ is not bipartite. Hence, it follows from Theorem~\ref{thm:wilson} that $\FS(\Star_n,Y)$ is connected, as desired. 

We may now assume $n\geq k+2$ and proceed by induction on $n$. Let $c$ be a vertex of $X$ of degree $k$, and let $N[c]$ be the closed neighborhood of $c$. There must be a leaf $x$ of $X$ such that $x\not\in N[c]$. Fix some vertex $y\in V(Y)\setminus\{Y_0\}$. Fix a vertex $\tau$ of $\FS(X,Y)$ such that $\tau(x)=y$. Choose some vertex $\sigma$ of $\FS(X,Y)$. We will show that $\sigma$ is in the same connected component as $\tau$; as $\sigma$ was arbitrary, this will prove that $\FS(X,Y)$ is connected. 

According to Lemma~\ref{thm:spider_sufficient_helper2}, there is a vertex $\sigma'$ in the same connected component of $\FS(X,Y)$ as $\sigma$ such that $\sigma'(x)=y$. Let $X'=X\setminus x$ and $Y'=Y\setminus y$. Let $\sigma'\vert_{X'}$ and $\tau\vert_{X'}$ be the restrictions of $\sigma'$ and $\tau$, respectively, to $V(X')$. Note that $X'$ is a tree with $n-1$ vertices and maximum degree $k$. Furthermore, every $k$-vertex induced subgraph of $Y'$ is connected, and $Y'$ contains the induced subgraph $Y_0$ such that $\FS(\Star_k,Y)$ is connected. Thus, we can use induction to see that $\FS(X',Y')$ is connected. This means that there is a sequence of $(X',Y')$-friendly swaps that transforms $\sigma'\vert_{X'}$ into $\tau\vert_{X'}$. This sequence of $(X',Y')$-friendly swaps can be interpreted as a sequence of $(X,Y)$-friendly swaps that transforms $\sigma'$ into $\tau$. This shows that $\sigma'$ is in the same connected component of $\FS(X,Y)$ as $\tau$, so $\sigma$ is also in the same connected component as $\tau$. 
\end{proof}

\begin{proof}[Proof of Theorem~\ref{thm:spider_necessary}]
Let $\lambda_1\geq\cdots\geq\lambda_k$ be as in the statement of the theorem, and let $X=\Spider(\lambda_1,\ldots,\lambda_k)$. Let $z$ be the leaf of $X$ on the leg of length $\lambda_1$. Let us define a partial order $\preceq$ on $V(X)$ by declaring that $x\preceq x'$ if the unique path from $z$ to $x'$ contains $x$. 

By hypothesis, there is a disconnected induced subgraph $Y_0$ of $Y$ with $n-\lambda_1$ vertices. There exist nonempty subsets $A,B\subseteq V(Y_0)$ such that $A\cup B= V(Y_0)$ and $A\cap B=\emptyset$ and such that no vertex in $A$ is adjacent in $Y$ to any vertex in $B$. Let us say a vertex $\sigma\in\FS(X,Y)$ is \dfn{special} if there exists $a_0\in A$ such that $\sigma^{-1}(a_0)\preceq\sigma^{-1}(b)$ for all $b\in B$. The graph $\FS(X,Y)$ has at least one special vertex and at least one non-special vertex. Therefore, in order to prove that $\FS(X,Y)$ is disconnected, it suffices to show that every connected component of $\FS(X,Y)$ consists entirely of special vertices or entirely of non-special vertices. 

Suppose instead that there is a connected component of $\FS(X,Y)$ that contains both special and non-special vertices. Then this connected component must have a special vertex $\sigma$ that is adjacent to a non-special vertex $\tau$. Let $\{u,v\}\in E(Y)$ be the edge label of the edge $\{\sigma,\tau\}\in E(\FS(X,Y))$; that is, $\tau=\sigma\circ(u\,\,v)$. Then $\{\sigma^{-1}(u),\sigma^{-1}(v)\}\in E(X)$. Because $\sigma$ is special, there exists $a_0\in A$ such that $\sigma^{-1}(a_0)\preceq\sigma^{-1}(b)$ for all $b\in B$. Using the definition of the partial order $\preceq$ and the fact that $X$ is a spider, it is now straightforward to see that $a_0\in\{u,v\}$; without loss of generality, we may assume $a_0=u$. Note that $\tau^{-1}(v)=\sigma^{-1}(a_0)\preceq\sigma^{-1}(b)=\tau^{-1}(b)$ for all $b\in B$; if $v$ were in $A$, then this would contradict the fact that $\tau$ is not special. This shows that $v\not\in A$, and we also know that $v\not\in B$ since $\{u,v\}\in E(Y)$ and no vertex in $A$ is adjacent to any vertex in $B$. Thus, $v\in V(Y)\setminus(A\cup B)$. 

Let $L$ be the set of vertices in $X$ that are in the leg of length $\lambda_1$ containing $z$. Thus, $|L|=\lambda_1$. Because $\sigma$ is special and $\tau$ is not, there exists $b_0\in B$ such that $\sigma^{-1}(a_0)\preceq \sigma^{-1}(b_0)$ while $\tau^{-1}(a_0)\not\preceq\tau^{-1}(b_0)$. This forces $\sigma^{-1}(a_0)$ to be the center of the spider $X$, and it also forces $\sigma^{-1}(v)$ to be in $V(X)\setminus L$. Since $\sigma^{-1}(a_0)\preceq\sigma^{-1}(b)$ for all $b\in B$, we must have $\tau^{-1}(B)=\sigma^{-1}(B)\subseteq V(X)\setminus L$. If there were a vertex $a_1\in\tau^{-1}(A)\cap L$, then we would have $\tau^{-1}(a_1)\preceq\tau^{-1}(b)$ for all $b\in B$, contradicting the fact that $\tau$ is not special. Hence, $\tau^{-1}(A)\subseteq V(X)\setminus L$. We also know that $\tau^{-1}(v)=\sigma^{-1}(a_0)$ is the center of $X$, so $\tau^{-1}(v)\in V(X)\setminus L$. Thus, $\tau^{-1}(A)\cup\tau^{-1}(B)\cup\{\tau^{-1}(v)\}$ is a set of size $n-\lambda_1+1$ that is contained in $V(X)\setminus L$. This is a contradiction because $|V(X)\setminus L|=n-\lambda_1$.   
\end{proof}

\section{Spiders, Cycles, and Fruits}\label{Sec:Complements_Cycles}

The goal of this section is to prove Theorems~\ref{thm:cycles_complement},~\ref{thm:fruit_graphs}, and~\ref{thm:spiders_min_degree}. The main tool for proving these theorems is the following lemma, which will allow us to build connected friends-and-strangers graphs from smaller ones. A family $\mathcal Y$ of (isomorphism classes of) graphs is called \dfn{hereditary} if it is closed under taking induced subgraphs (i.e., every induced subgraph of a graph in $\mathcal Y$ is also in $\mathcal Y$). 

\begin{lemma}\label{lem:hereditary}
Let $\mathcal Y$ be a hereditary family of connected graphs. Let $X$ be a graph with $n$ vertices such that $\FS(X,Y)$ is connected for every $n$-vertex graph $Y\in\mathcal Y$. Let $x\in V(X)$, and let $X'$ be the graph obtained from $X$ by adding a new vertex $x'$ together with the edge $\{x,x'\}$. Then $\FS(X',Y')$ is connected for every $(n+1)$-vertex graph $Y'\in \mathcal Y$. 
\end{lemma}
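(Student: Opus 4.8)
The plan is to fix an $(n+1)$-vertex graph $Y'\in\mathcal Y$ and prove that $\FS(X',Y')$ is connected by partitioning its vertex set according to the image of the new leaf $x'$, showing that each block of this partition lies in a single connected component, and then gluing the blocks together using friendly swaps across the pendant edge $\{x,x'\}$. Note that $x'$ is a leaf of $X'$ whose unique neighbor is $x$, and that $X'\setminus x'=X$; these two facts are what make the bookkeeping work.

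For each $y'\in V(Y')$ write $Y'_{y'}:=Y'\setminus y'$. Since $\mathcal Y$ is hereditary, $Y'_{y'}\in\mathcal Y$, and it has $n$ vertices, so the hypothesis on $X$ gives that $\FS(X,Y'_{y'})$ is connected. Let $S_{y'}:=\{\sigma\in V(\FS(X',Y')):\sigma(x')=y'\}$. The first claim is that restriction to $V(X)=V(X'\setminus x')$ identifies the subgraph of $\FS(X',Y')$ induced on $S_{y'}$ with $\FS(X,Y'_{y'})$. Indeed, a friendly swap joining two elements of $S_{y'}$ cannot take place across the edge $\{x,x'\}$ (that would change the image of $x'$ and leave $S_{y'}$), so it occurs across an edge of $X$, and its edge label is an edge of $Y'$ not involving $y'$, hence an edge of $Y'_{y'}$; conversely every $(X,Y'_{y'})$-friendly swap lifts to one internal to $S_{y'}$. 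Consequently each $S_{y'}$ is contained in a single connected component of $\FS(X',Y')$.

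The second claim is that $S_{y'}$ and $S_{y''}$ lie in the same connected component whenever $\{y',y''\}\in E(Y')$. Since $\FS(X,Y'_{y'})$ is connected it contains every bijection $V(X)\to V(Y'_{y'})$, and in particular (using $y''\in V(Y'_{y'})$) there is a $\sigma\in S_{y'}$ with $\sigma(x)=y''$. Then $\{x,x'\}\in E(X')$ and $\{\sigma(x),\sigma(x')\}=\{y'',y'\}\in E(Y')$, so $\sigma$ is adjacent in $\FS(X',Y')$ to $\sigma\circ(x\,\,x')$, which lies in $S_{y''}$. Finally, because $Y'\in\mathcal Y$ and every graph in $\mathcal Y$ is connected, the graph $Y'$ is connected, so repeatedly applying the second claim shows that all the blocks $S_{y'}$, $y'\in V(Y')$, lie in one connected component of $\FS(X',Y')$; as these blocks partition $V(\FS(X',Y'))$, the graph $\FS(X',Y')$ is connected.

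I do not anticipate a deep obstacle; the step that must be written with care is the identification in the second paragraph — verifying that the edges of $\FS(X',Y')$ internal to $S_{y'}$ correspond exactly to the edges of $\FS(X,Y'_{y'})$ — which relies precisely on $x'$ being a leaf attached to $x$ and on $y'$ never participating in a swap internal to $S_{y'}$. This is essentially the same bookkeeping that appears in the inductive steps of Lemma~\ref{thm:spider_sufficient_helper2}.
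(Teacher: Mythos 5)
Your proof is correct and takes essentially the same approach as the paper's: you partition the vertices of $\FS(X',Y')$ according to the image of the leaf $x'$, identify each block with $\FS(X,Y'\setminus y')$ (connected by the hypothesis and hereditarity of $\mathcal Y$), join adjacent blocks by a friendly swap across the pendant edge, and chain using the connectedness of $Y'$. The paper phrases this by fixing two bijections with adjacent $x'$-images and producing a path, but the decomposition and the gluing step are identical to yours.
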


\begin{proof}
We will prove that if $\sigma,\sigma'\colon V(X')\to V(Y')$ are two vertices of $\FS(X',Y')$ such that $\sigma(x')$ and $\sigma'(x')$ are adjacent in $Y'$, then $\sigma$ and $\sigma'$ are in the same connected component of $\FS(X',Y')$. Since $Y'$ is connected, this will imply that any two vertices in $\FS(X',Y')$ is connected.

Let $y=\sigma(x')$ and $y'=\sigma'(x')$, and suppose $\{y,y'\}\in E(Y')$. Let $F_y$ be the subgraph of $\FS(X',Y')$ induced by the set of vertices $\tau$ satisfying $\tau(x')=y$. Similarly, define $F_{y'}$ to be the subgraph of $\FS(X',Y')$ induced by the set of vertices $\tau'$ satisfying $\tau'(x') = y'$. Let $Y_y$ and $Y_{y'}$ be the induced subgraphs of $Y'$ on the vertex sets $V(Y')\setminus\{y\}$ and $V(Y')\setminus\{y'\}$, respectively. It is clear that $F_y$ and $F_{y'}$ are isomorphic to $\FS(X,Y_y)$ and $\FS(X,Y_{y'})$, respectively. Since $\mathcal Y$ is hereditary, we know that $Y_y$ and $Y_{y'}$ are both in $\mathcal Y$. Therefore, our hypothesis on $X$ guarantees that both $F_y$ and $F_{y'}$ are connected graphs.

Let $\mu$ be any vertex in $\FS(X',Y')$ satisfying $\mu(x)=y'$ and $\mu(x') = y$, and let $\mu'$ be the vertex $\mu \circ (x \; x')$. Note that $\mu'(x') = y'$ and that $\mu$ and $\mu'$ are adjacent in $\FS(X,Y)$. We have $\mu \in F_y$ and $\mu' \in F_{y'}$, so there are paths in $\FS(X',Y')$ from $\sigma$ to $\mu$ and from $\sigma'$ to $\mu'$. Therefore, $\sigma$ and $\sigma'$ are in the same connected component of $\FS(X',Y')$, as desired.
\end{proof}

\begin{corollary}\label{lemma2.2} 
Let $X$ be a graph with $n\geq 5$ vertices such that $\FS(X,\overline{\Cycle_n})$ is connected. Let $x\in V(X)$, and let $X'$ be the graph obtained from $X$ by adding a new vertex $x'$ together with the edge $\{x,x'\}$. Then $\FS(X',\overline{\Cycle_{n+1}})$ is connected. 
\end{corollary}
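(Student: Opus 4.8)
The plan is to follow the proof of Lemma~\ref{lem:hereditary} almost verbatim, with one modification. One cannot simply invoke Lemma~\ref{lem:hereditary} with $\mathcal Y=\{\overline{\Cycle_m}\}_{m\ge 3}$, because this family is not hereditary: deleting a vertex from $\overline{\Cycle_{n+1}}$ yields $\overline{\Path_n}$, not $\overline{\Cycle_n}$. (In fact the smallest hereditary family containing all the graphs $\overline{\Cycle_m}$ contains disconnected graphs such as $\overline{\Path_3}$, so the hypotheses of Lemma~\ref{lem:hereditary} cannot be met.) So I would first isolate the comparison needed to control the ``deleted'' graph, and then rerun the argument.

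The comparison I need is: for any $m$-vertex graph $Z$, if $\FS(Z,\overline{\Cycle_m})$ is connected then $\FS(Z,\overline{\Path_m})$ is connected. To see this, realize $\Path_m$ and $\Cycle_m$ on the same vertex set with $E(\Path_m)\subseteq E(\Cycle_m)$; then $E(\overline{\Cycle_m})\subseteq E(\overline{\Path_m})$, so $\FS(Z,\overline{\Cycle_m})$ is a spanning subgraph of $\FS(Z,\overline{\Path_m})$, and a graph that has a connected spanning subgraph is connected. I would also record that taking complements commutes with deleting a vertex, so $\overline{\Cycle_{n+1}}\setminus v\cong\overline{\Path_n}$ for every vertex $v$.

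Now for the main argument, set $Y'=\overline{\Cycle_{n+1}}$; since $n+1\ge 6$, this graph is connected. It suffices to show that any two vertices $\sigma,\sigma'$ of $\FS(X',Y')$ with $\{\sigma(x'),\sigma'(x')\}\in E(Y')$ lie in the same connected component of $\FS(X',Y')$: by connectedness of $Y'$, chaining such pairs along a path in $Y'$ then shows that $\FS(X',Y')$ is connected. Put $y=\sigma(x')$ and $y'=\sigma'(x')$. As in the proof of Lemma~\ref{lem:hereditary}, the subgraph $F_y$ of $\FS(X',Y')$ induced by the bijections sending $x'$ to $y$ is isomorphic to $\FS(X,Y'\setminus y)$, and $Y'\setminus y\cong\overline{\Path_n}$; similarly $F_{y'}\cong\FS(X,\overline{\Path_n})$. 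By the comparison above applied with $Z=X$ (using the hypothesis that $\FS(X,\overline{\Cycle_n})$ is connected), both $F_y$ and $F_{y'}$ are connected. Choosing any bijection $\mu\colon V(X')\to V(Y')$ with $\mu(x)=y'$ and $\mu(x')=y$, and setting $\mu'=\mu\circ(x\;x')$, the vertices $\mu\in F_y$ and $\mu'\in F_{y'}$ are adjacent in $\FS(X',Y')$ because $\{x,x'\}\in E(X')$ and $\{\mu(x),\mu(x')\}=\{y,y'\}\in E(Y')$. Hence $\sigma$ and $\sigma'$ are joined by a walk that goes through $F_y$, across the edge $\{\mu,\mu'\}$, and through $F_{y'}$.

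I expect the only real obstacle to be the observation flagged in the first paragraph: recognizing that the hereditary framework of Lemma~\ref{lem:hereditary} is not literally available here, and that the correct remedy is the spanning-subgraph comparison $\overline{\Cycle_m}\subseteq\overline{\Path_m}$ together with the identification $\overline{\Cycle_{n+1}}\setminus v\cong\overline{\Path_n}$. Once these are in hand, everything else is a transcription of the proof of Lemma~\ref{lem:hereditary}.
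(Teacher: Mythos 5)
Your proof is correct, and it takes essentially the same route as the paper: the key observations are the spanning-subgraph comparison $E(\overline{\Cycle_m})\subseteq E(\overline{\Path_m})$ and the ``chain along a path in $Y'$'' argument from the proof of Lemma~\ref{lem:hereditary}, both of which also underlie the paper's proof.

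That said, you have put your finger on a genuine imprecision in the paper. The paper's proof does invoke Lemma~\ref{lem:hereditary} with $\mathcal Y$ equal to the hereditary closure of $\{\overline{\Cycle_N}\}_{N\ge5}$, but as you note, this family contains disconnected graphs (e.g.\ $\overline{\Path_3}$, $\overline{\Cycle_4}$), so it is not literally ``a hereditary family of connected graphs.'' In fact the stated hypothesis of Lemma~\ref{lem:hereditary} is far too strong to be usable as written: any hereditary family consisting entirely of connected graphs can contain only complete graphs (a non-complete graph on $\ge 2$ vertices has a $2$-vertex disconnected induced subgraph). What the lemma actually needs is much weaker: that $\mathcal Y$ is hereditary, that the $(n+1)$-vertex graph $Y'$ under consideration is connected, and that $\FS(X,Y)$ is connected for every $n$-vertex $Y\in\mathcal Y$ (this last hypothesis already forces such $Y$ to be connected). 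The paper's application is safe because all members of its $\mathcal Y$ on $\ge 5$ vertices are connected, which covers every $n$- and $(n+1)$-vertex graph that the argument touches when $n\ge 5$. So there are two valid remedies: restate Lemma~\ref{lem:hereditary} with the weaker hypothesis and invoke it as the paper does, or bypass the lemma and redo its argument inline for this particular $Y'$, as you did. Both lead to the same place; your version has the advantage of being self-contained, while the paper's version (once its lemma is worded correctly) also yields Corollaries~\ref{lemma2.2fruit} and~\ref{cor:min_degre} for free.
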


\begin{proof}
Let $\mathcal Y$ be the smallest hereditary family of graphs that contains $\overline{\Cycle_N}$ for all $N\geq 5$. In other words, $\mathcal Y$ is the collection of graphs that can be realized as induced subgraphs of complements of cycles. It is straightforward to verify that a graph is in $\mathcal Y$ if and only if its complement is a cycle or a disjoint union of paths. It follows that if $Y$ is an $n$-vertex graph in $\mathcal Y$, then $\overline{\Cycle_n}$ is a subgraph of $Y$. This shows that $\FS(X,Y)$ is connected for every $n$-vertex graph $Y$ in $\mathcal Y$, so it follows from Lemma~\ref{lem:hereditary} that $\FS(X',Y')$ is connected for every $(n+1)$-vertex graph $Y'$ in $\mathcal Y$. In particular, $\FS(X',\overline{\Cycle_{n+1}})$ is connected. 
\end{proof}

\begin{corollary}\label{lemma2.2fruit} 
Let $X$ be a graph with $n\geq 5$ vertices such that $\FS(X,\overline{\Cycle_n})$ and $\FS(X,\overline{\Fruit_n})$ are both connected. Let $x\in V(X)$, and let $X'$ be the graph obtained from $X$ by adding a new vertex $x'$ together with the edge $\{x,x'\}$. Then $\FS(X',\overline{\Fruit_{n+1}})$ is connected. 
\end{corollary}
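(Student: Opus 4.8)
The plan is to imitate the proof of Corollary~\ref{lemma2.2}, now working with a larger hereditary family. Let $\mathcal Y$ be the smallest hereditary family of graphs containing $\overline{\Cycle_N}$ for every $N\geq 5$ and $\overline{\Fruit_N}$ for every $N\geq 4$; concretely, $\mathcal Y$ is the collection of graphs that arise as induced subgraphs of complements of cycles or of complements of fruit graphs. Since $\overline{\Fruit_{n+1}}\in\mathcal Y$, Lemma~\ref{lem:hereditary}, applied to $\mathcal Y$ and the chosen vertex $x\in V(X)$, reduces the corollary to the claim that $\FS(X,Y)$ is connected for every $n$-vertex graph $Y\in\mathcal Y$. (As in the proof of Corollary~\ref{lemma2.2}, some members of $\mathcal Y$ are disconnected; this is harmless, since the argument for Lemma~\ref{lem:hereditary} only invokes connectedness of the single $(n+1)$-vertex graph under consideration, and one checks directly that $\overline{\Fruit_{n+1}}$ is connected because its pendant vertex is adjacent in the complement to all but one vertex.)

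To prove the claim, I would show that every $n$-vertex graph $Y\in\mathcal Y$ contains $\overline{\Cycle_n}$ or $\overline{\Fruit_n}$ as a spanning subgraph. Given this, connectedness of $\FS(X,Y)$ follows: by hypothesis $\FS(X,\overline{\Cycle_n})$ and $\FS(X,\overline{\Fruit_n})$ are connected, and whenever $H$ is a spanning subgraph of $Y$ the graph $\FS(X,H)$ is a spanning subgraph of $\FS(X,Y)$, so $\FS(X,Y)$ contains a connected spanning subgraph. Passing to complements, the claim becomes: every $n$-vertex graph $Z$ that is an induced subgraph of some $\Cycle_N$ or of some $\Fruit_N$ is a (not necessarily induced) subgraph of $\Cycle_n$ or of $\Fruit_n$. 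The case of induced subgraphs of cycles is exactly the observation already used in Corollary~\ref{lemma2.2}: such a $Z$ is either $\Cycle_n$ itself or a disjoint union of paths on $n$ vertices, and every disjoint union of paths on $n$ vertices embeds into $\Cycle_n$.

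The substantive step is the case where $Z$ is an $n$-vertex induced subgraph of a fruit graph $\Fruit_N$ with $N>n$ (the case $N=n$ being trivial, as then $Z=\Fruit_n$), which I would handle by deleting $N-n$ vertices from $\Fruit_N=\Cycle_{N-1}+(\text{a pendant edge at a vertex }v)$ and splitting into cases according to whether $v$, its two $\Cycle_{N-1}$-neighbors, and the pendant vertex survive. If $v$ or the pendant vertex is deleted, or if $v$ retains at most one of its two $\Cycle_{N-1}$-neighbors, then $Z$ is either $\Cycle_n$ (which can occur only when $N=n+1$ and only the pendant is deleted) or a disjoint union of paths, and in either case $Z$ embeds into $\Cycle_n$. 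In the remaining case $Z$ has exactly one vertex of degree $3$, namely $v$; at least one vertex of $\Cycle_{N-1}$ has been deleted, so the component of $Z$ containing $v$ is a spider whose two nontrivial legs are arcs of $\Cycle_{N-1}$ and whose third leg is the single pendant edge, while every other component of $Z$ is a path (an arc of $\Cycle_{N-1}$). A vertex count then shows this graph embeds into $\Fruit_n$ by sending $v$ to the degree-$3$ vertex of $\Fruit_n$, the length-$1$ leg to the pendant of $\Fruit_n$, and all remaining $n-2$ vertices into the cycle $\Cycle_{n-1}$ of $\Fruit_n$ as consecutive arcs around the attachment point. I expect this embedding bookkeeping — tracking arc lengths so that everything fits exactly into the $(n-1)$-cycle of $\Fruit_n$ — to be the main, though routine, obstacle.
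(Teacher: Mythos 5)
Your proposal is correct and follows essentially the same approach as the paper: apply Lemma~\ref{lem:hereditary} with the hereditary family generated by complements of fruit graphs (your family is the same up to harmless extra small graphs, since $\overline{\Cycle_{N-1}}$ is already an induced subgraph of $\overline{\Fruit_N}$), reducing to the claim that every $n$-vertex graph in the family contains $\overline{\Cycle_n}$ or $\overline{\Fruit_n}$ as a spanning subgraph. The paper dismisses this last step as "straightforward to verify," whereas you spell out the case analysis (by complementation, an $n$-vertex induced subgraph of $\Cycle_N$ or $\Fruit_N$ embeds into $\Cycle_n$ or $\Fruit_n$), and you also flag, correctly, that the "connected graphs" hypothesis of Lemma~\ref{lem:hereditary} is not literally met but is only needed for the specific graphs that arise in its proof.
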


\begin{proof}
Let $\mathcal Y$ be the smallest hereditary family of graphs that contains $\overline{\Fruit_N}$ for every $N\geq 5$. In other words, $\mathcal Y$ is the collection of graphs that can be realized as induced subgraphs of complements of fruit graphs. It is straightforward to verify that every $n$-vertex graph in $\mathcal Y$ contains either $\overline{\Cycle_n}$ or $\overline{\Fruit_n}$ as a subgraph. This shows that $\FS(X,Y)$ is connected for every $n$-vertex graph $Y$ in $\mathcal Y$, so it follows from Lemma~\ref{lem:hereditary} that $\FS(X',Y')$ is connected for every $(n+1)$-vertex graph $Y'$ in $\mathcal Y$. In particular, $\FS(X',\overline{\Fruit_{n+1}})$ is connected. 
\end{proof}

\begin{corollary}\label{cor:min_degre}
Let $X$ be a graph with $n\geq 4$ vertices such that $\FS(X,Y)$ is connected for all $n$-vertex graphs $Y$ with minimum degree at least $n-3$. Let $x\in V(X)$, and let $X'$ be the graph obtained from $X$ by adding a new vertex $x'$ together with the edge $\{x,x'\}$. If $Y$ is a graph with $n+1$ vertices and minimum degree at least $n-2$, then $\FS(X',Y')$ is connected. 
\end{corollary}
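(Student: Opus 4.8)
The plan is to deduce this from Lemma~\ref{lem:hereditary}, exactly in the spirit of Corollaries~\ref{lemma2.2} and~\ref{lemma2.2fruit}. Let $\mathcal Y$ be the family of all graphs $G$ whose minimum degree is at least $|V(G)|-3$; equivalently, since the degree of a vertex in $G$ and the degree of that vertex in $\overline{G}$ sum to $|V(G)|-1$, this is the family of all graphs $G$ such that $\overline G$ has maximum degree at most $2$. The point will be that this condition is hereditary, that the hypothesis on $X$ is exactly the statement that $\FS(X,Y)$ is connected for every $n$-vertex $Y\in\mathcal Y$, and that a graph $Y'$ on $n+1$ vertices of minimum degree at least $n-2=(n+1)-3$ belongs to $\mathcal Y$.

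Carrying this out, I would first check that $\mathcal Y$ is hereditary: deleting a vertex $v$ from $G$ decreases the number of vertices by exactly one and decreases the degree of each surviving vertex by at most one, so if $G\in\mathcal Y$ then $G\setminus v$ has minimum degree at least $(|V(G)|-3)-1=|V(G\setminus v)|-3$; since every induced subgraph is obtained by iterated vertex deletion, $\mathcal Y$ is closed under taking induced subgraphs. Then, since the hypothesis guarantees that $\FS(X,Y)$ is connected for every $n$-vertex $Y\in\mathcal Y$, Lemma~\ref{lem:hereditary} gives that $\FS(X',Y')$ is connected for every $(n+1)$-vertex $Y'\in\mathcal Y$. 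Finally, any $(n+1)$-vertex graph $Y'$ with minimum degree at least $n-2$ lies in $\mathcal Y$, so $\FS(X',Y')$ is connected, as desired.

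I do not anticipate a real obstacle here; the only wrinkle is that Lemma~\ref{lem:hereditary} is stated for a hereditary family of \emph{connected} graphs, whereas $\mathcal Y$ as defined contains some disconnected small graphs. To handle this, I would note that we may assume $n\geq 5$: when $n=4$ the hypothesis on $X$ is vacuous, because the disjoint union of two edges is a $4$-vertex graph of minimum degree $1=n-3$ for which the pair of chairs occupied by a fixed edge of $Y$ is invariant under $(X,Y)$-friendly swaps, so $\FS(X,Y)$ is disconnected for every $4$-vertex $X$. And for $n\geq 5$, every member of $\mathcal Y$ on at least $5$ vertices is automatically connected, since a disconnected graph on $m\geq 5$ vertices has a complement containing a copy of $K_{a,m-a}$ with $1\leq a\leq m-1$ and therefore a vertex whose degree in the complement is at least $\lceil m/2\rceil\geq 3$. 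Thus the relevant members of $\mathcal Y$ (those on $n$ and on $n+1$ vertices) are connected, and Lemma~\ref{lem:hereditary} applies as needed.
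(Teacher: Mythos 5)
Your proof matches the paper's approach: invoke Lemma~\ref{lem:hereditary} with $\mathcal Y$ the family of graphs whose complements have maximum degree at most $2$, which is exactly what the paper does in a single sentence. The wrinkle you raise is genuine and the paper does not address it: that family contains disconnected graphs on $4$ vertices (e.g., two disjoint edges, whose complement is $\Cycle_4$), so it is not literally a hereditary family of \emph{connected} graphs as the lemma's hypothesis demands. Your resolution is correct---for $n=4$ the hypothesis on $X$ is vacuous, and for $n\geq 5$ every member of $\mathcal Y$ on $n$ or $n+1$ vertices is connected by your complement-degree argument. The only caveat worth making explicit is that this does not make the lemma apply \emph{verbatim} (since $\mathcal Y$ still contains disconnected $4$-vertex graphs); rather, one should observe that the proof of Lemma~\ref{lem:hereditary} uses connectedness only for the $(n+1)$-vertex graph $Y'$ and uses hereditariness only to pass from $n+1$ vertices down to $n$, so the offending small members of $\mathcal Y$ never enter the argument. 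With that reading, your proof is sound and is in fact more careful than the paper's own.
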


\begin{proof}
This follows immediately from Lemma~\ref{lem:hereditary} by setting $\mathcal Y$ to be the collection of graphs whose complements have maximum degree at most $2$. 
\end{proof}

We can now proceed to the proofs of the main theorems of this section. 

\begin{proof}[Proof of Theorem~\ref{thm:cycles_complement}]
Choose $\lambda_1\geq\cdots\geq\lambda_k\geq 1$ with $k\geq 3$ and $\lambda_1\cdots+\lambda_k+1=n$. We can easily check using a computer that $\FS(\Spider(\lambda_1,\ldots,\lambda_k),\overline{\Cycle_n})$ is disconnected if $(\lambda_1,\ldots,\lambda_k)$ is one of the seven partitions listed in the statement of the theorem. Now suppose $(\lambda_1,\ldots,\lambda_k)=(\lambda_1,1,1)$. We know by \cite[Theorem~6.5]{friends} that $\FS(\Spider(\lambda_1,1,1),Y)$ is connected if and only if the minimum degree of $Y$ is at least $n-2$; since the minimum degree of $\overline{\Cycle_n}$ is $n-3$, $\FS(\Spider(\lambda_1,1,1),\overline{\Cycle_n})$ is disconnected. 

Now assume $(\lambda_1,\ldots,\lambda_k)$ is not of the form $(\lambda_1,1,1)$ and is not one of the seven partitions listed in the statement of Theorem~\ref{thm:cycles_complement}. It is straightforward to check that these assumptions guarantee that $\Spider(\lambda_1,\ldots,\lambda_k)$ contains an induced subgraph $\Spider(\rho_1,\ldots,\rho_r)$, where $(\rho_1,\ldots,\rho_r)$ is one of the following partitions: 
\[(1,1,1,1,1),\quad (2,1,1,1),\quad (6,2,1),\quad (4,3,1),\quad (3,2,2).\] Let $n'=\rho_1+\cdots+\rho_r+1$. We can verify by computer that $\FS(\Spider(\rho_1,\ldots,\rho_r),\overline{\Cycle_{n'}})$ is connected. The graph $\Spider(\lambda_1,\ldots,\lambda_k)$ can be obtained from $\Spider(\rho_1,\ldots,\rho_r)$ by a sequence of operations, where each operation adds a new vertex to the graph and adds a new edge that has the new vertex as one of its endpoints. Therefore, it follows from Corollary~\ref{lemma2.2} that $\FS(\Spider(\lambda_1,\ldots,\lambda_k),\overline{\Cycle_n})$ is connected. 
\end{proof}

\begin{proof}[Proof of Theorem~\ref{thm:fruit_graphs}]
We can use a computer to check that \[\FS(\Spider(1,1,1,1,1),\overline{\Fruit_6}),
\quad \FS(\Spider(2,2,1,1),\overline{\Fruit_7}),\quad\text{and}\quad\FS(\Spider(2,2,3),\overline{\Fruit_8})\] are connected. It now follows easily from Corollary~\ref{lemma2.2fruit} that the graph $\FS(\Spider(\lambda_1,\ldots,\lambda_k),\overline{\Fruit_n})$ is connected whenever $k\geq 5$ or $k=4$ and $\lambda_2\geq 2$ or $k=3$ and $\lambda_1\geq3$ and $\lambda_3\geq 2$. 

With a computer, we can also verify that $\FS(\Spider(2,2,2),\overline{\Fruit_7})$ has 12 connected components. Thus, to prove the reverse direction, we must show that $\FS(\Spider(\lambda_1,\ldots,\lambda_k),\overline{\Fruit_n})$ is disconnected if $k=3$ and $\lambda_3 = 1$ or if $k=4$ and $\lambda_2=1$.  Since $V(\overline{\Fruit_n})=V(\Fruit_n)$, we can consider the identity bijection $\text{id}\colon V(\Fruit_n)\to V(\overline{\Fruit_n})$; it is straightforward to see that this bijection is an isolated vertex in $\FS(\Fruit_n,\overline{\Fruit_n})$. Hence, $\FS(\Fruit_n,\overline{\Fruit_n})$ is disconnected. It is also straightforward to see that $\Spider(\lambda_1,\lambda_2,1)$ is a subgraph of $\Fruit_{n}$, so this resolves the case where $k = 3$ and $\lambda_3 = 1$. Now suppose $k = 4$ and $\lambda_2 = 1$. Then $\Spider(\lambda_1,1,1,1)$ is the dandelion $\Dand_{4,n}$, and $n-\lambda_1=4$. Since $\Fruit_n$ clearly contains a disconnected induced subgraph with $4$ vertices, it follows from Theorem~\ref{thm:spider_necessary} that $\FS(\Spider(\lambda_1,1,1,1),\Fruit_n)$ is disconnected.
\end{proof}

\begin{proof}[Proof of Theorem~\ref{thm:spiders_min_degree}]
Let $G$ be one of the six graphs listed in the statement of Theorem~\ref{thm:spiders_min_degree}, and let $X$ be a connected graph that contains $G$ as a subgraph. Let $n$ and $n'$ be the number of vertices of $X$ and $G$, respectively. We can check by computer that $\FS(G,H)$ is connected for every graph $H$ with $n'$ vertices and minimum degree at least $n'-3$. Because $X$ is connected, it is possible to obtain an $n$-vertex subgraph $\widetilde X$ of $X$ by sequence of operations, where each operation adds a new vertex to the graph and adds a new edge that has the new vertex as one of its endpoints. It follows from Corollary~\ref{cor:min_degre} that if $Y$ is an $n$-vertex graph with minimum degree at least $n-3$, then $\FS(\widetilde X,Y)$ is connected. Since $\widetilde X$ is a subgraph of $X$ with the same vertex set as $X$, the friends-and-strangers graph $\FS(X,Y)$ must also be connected for every such $Y$. 
\end{proof}

\section{Coxeter Moves and the Cycle Space}\label{Sec:Cycles}

Recall from Section~\ref{SecIntro} that the \dfn{cycle space} of a graph $G$ is the vector space over $\mathbb F_2$ whose elements are the even-degree edge-subgraphs of $G$ and whose addition is the symmetric difference operation $\triangle$. This vector space is spanned by the cycles of $G$. The goal of this section is to prove Theorem~\ref{thm:isometric}, which provides smaller generating sets for the cycle spaces of graphs of the form $\FS(\Cycle_n,Y)$ when $Y$ has domination number at least $3$. 

We provide a framework to study paths in $\FS(\Cycle_n, Y)$ that is inspired by Coxeter moves in the affine symmetric group. Under this framework, we also prove some results that apply when $Y$ has domination number $2$. 

Recall the definitions of \emph{walks}, \emph{edge labels}, and \emph{label sequences} from Section~\ref{sec:preliminaries}. We are going to describe certain moves that one can perform on a walk $W$ to obtain a new walk with the same starting and ending vertices. The reader familiar with the theory of Coxeter groups should recognize these operations as essentially Coxeter moves in the affine symmetric group, whose Coxeter graph is $\Cycle_n$ (though we will not need this formalism). If there are two consecutive identical edges in $W$, then we can simply delete them; we call this operation a \dfn{square deletion} (because it corresponds to the fact that the square of a simple reflection in the affine symmetric group is the identity). The opposite of a square deletion is a \dfn{square insertion}, which consists of inserting two consecutive identical edges into the walk so that the resulting sequence of edges is a walk. If there are two consecutive edges in $W$ that do not share a vertex, then we can simply swap those two edges; we call this operation a \dfn{commutation move}. Finally, if $W$ contains three consecutive edges $e_1,e_2,e_3$, then we can replace them with the edges $e_3,e_2,e_1$; we call this operation a \dfn{Yang--Baxter move}. Collectively, we refer to square deletions, square insertions, commutation moves, and Yang--Baxter moves as \dfn{Coxeter moves}. 

One should think of a commutation move as replacing two edges in a $4$-cycle in $\FS(\Cycle_n,Y)$ with the other two edges in the same $4$-cycle. Similarly, one should think of a Yang--Baxter move as replacing three edges in a $6$-cycle in $\FS(\Cycle_n,Y)$ with the other three edges in the same $6$-cycle. 

We will often specify a walk in $\FS(\Cycle_n,Y)$ by listing its label sequence. It is helpful to note that a Yang--Baxter move acts on the label sequence of a walk by simply reversing the order of three consecutive edge labels of the form $ab,ac,bc$. 

We define an \dfn{anchored walk} to be a walk in $\FS(\Cycle_n,Y)$ whose label sequence is of the form $a_1b_1,a_2b_2,\ldots,a_kb_k,a_1b_1$; the first and last edge labels in this walk (which are identical) are called the \dfn{anchors}. We say that such an anchored walk is \dfn{repetition-free} if the labels $a_1b_1,a_2,b_2,\ldots,a_kb_k$ are distinct. An anchored walk is \dfn{trivial} if its label sequence has only two (necessarily identical) labels. We can perform Coxeter moves on anchored walks in the same way that we performed them on ordinary walks; however, for anchored walks, we preserve the anchors and thus possibly shorten the walk. For example, suppose an anchored walk starts at a vertex $\sigma$ and uses edges with labels $ab,ac,bc,ab$. The anchors are the first and last labels $ab$. We can perform a Yang--Baxter move on the first three edge labels to transform this walk into $bc,ac,ab,ab$. However, as an anchored walk, we would cut out the edge labels that are no longer between the two $ab$ anchors. Thus, the new anchored walk would have label sequence $ab,ab$, so it would be trivial. 

Let us say an anchored walk in $\FS(\Cycle_n,Y)$ is \dfn{complete} if its label sequence is of the form $ab, au_1, \dots au_k, bu_{k+1}, \dots, bu_{n-2}, ab$, where $\{a,b,u_1,\ldots,u_{n-2}\}=[n]=V(\Cycle_n)$. Note that the existence of such a complete walk forces $\{a, b\}$ to be a dominating set of $Y$. We say an anchored walks $W$ \dfn{reduces} to an anchored walk $W'$ if there is a sequence of Coxeter moves that transforms the label sequence of $W$ into that of $W'$ such that the number of square insertions that insert two copies of the label $xy$ is at most the number of square deletions that delete two copies of the label $xy$. 

\begin{theorem} \label{thm:isocycles}
Let $Y$ be an $n$-vertex graph with domination number at least $2$, and let $W$ be a repetition-free anchored walk in $\FS(\Cycle_n,Y)$ with anchors $ab$. If there exists a vertex $z\in V(Y)$ such that the label sequence of $W$ either contains both $az$ and $bz$ or contains neither $az$ nor $bz$, then $W$ reduces to a trivial anchored walk. Otherwise, $W$ reduces to a complete anchored walk.
\end{theorem}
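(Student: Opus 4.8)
The key structural fact is that the label sequence of a walk in $\FS(\Cycle_n,Y)$ is essentially a word in the affine symmetric group $\widetilde{\mathfrak S}_n$: an edge labeled $yy'$ records a transposition of the people $y,y'$ sitting in adjacent chairs of $\Cycle_n$, and two labels $yy'$, $zz'$ correspond to commuting generators exactly when $\{y,y'\}\cap\{z,z'\}=\emptyset$, while three consecutive labels $ab,ac,bc$ correspond to a braid relation. Thus the first thing I would do is set up this dictionary carefully: fix the starting vertex $\sigma$, identify each edge label with a simple transposition of $V(Y)$ used at the appropriate moment, and check that square deletions, commutation moves, and Yang--Baxter moves on label sequences are exactly the Coxeter relations. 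The anchors $ab$ mean that, conjugating by the prefix, we are looking at a product of transpositions that must equal a conjugate of $(a\ b)$; more usefully, the ``reduces to'' relation is designed so that we can freely use all Coxeter moves but are only allowed to cancel a given label pair, never to net-create it.

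**Main argument.** I would argue by induction on the length (number of edges) of the repetition-free anchored walk $W$, with label sequence $ab, a_2b_2,\dots,a_{k}b_{k}, ab$ where the $k$ labels $ab,a_2b_2,\dots,a_kb_k$ are distinct. The heart is a local analysis of how the two anchor letters $a$ and $b$ "travel" through the walk. Because consecutive edges of a walk in $\FS(\Cycle_n,Y)$ must share a chair of $\Cycle_n$, and because of the cyclic structure, I would track the positions (chairs) occupied by the people $a$ and $b$ as we read the label sequence left to right. Since each label either moves $a$, moves $b$, moves both (only the label $ab$ itself, but it is repetition-free so $ab$ occurs only as the anchors), or moves neither, the labels involving $a$ form a contiguous "path of $a$" around the cycle and likewise for $b$; the crux is that the labels touching neither $a$ nor $b$ can be commuted out to the ends and then cancelled against the anchors (they contribute a trivial net displacement of everything between the two fixed occurrences of $a$ and $b$), so they reduce away. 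What remains, after these commutations and square deletions, is a sequence built only from labels of the form $ax$ and $bx$. Now the dichotomy in the statement enters: if there is a $z$ with both $az$ and $bz$ present, then somewhere in the walk the people $a$ and $b$ must pass the common neighbor $z$, which creates a configuration $ab, az, bz$ (or a reverse) on which a Yang--Baxter move produces a new $ab$ adjacent to an anchor — this is exactly the mechanism illustrated in the excerpt's example $ab,ac,bc,ab \rightsquigarrow ab,ab$ — so the walk shortens, and induction finishes. If instead there is a $z$ with neither $az$ nor $bz$, then $\{a,b\}$ fails to dominate $Y$ (since $z$ is adjacent to neither — wait, more precisely the absence of such labels limits how $a,b$ can move), and I would show that in this case $a$ and $b$ are "blocked" and cannot both make a full circuit, forcing after reduction the trivial walk. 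In the remaining case — for every $z$, exactly one of $az,bz$ appears — the labels $ax$ and the labels $bx$ partition the vertex set $[n]\setminus\{a,b\}$, and by repeatedly applying commutations and the same Yang--Baxter mechanism one drives the word into the canonical form $ab, au_1,\dots,au_j, bu_{j+1},\dots,bu_{n-2}, ab$ of a complete walk; one needs the domination-number-$\ge 2$ hypothesis to guarantee $\{a,b\}$ is actually a dominating set (equivalently that all $n-2$ other people appear), so that nothing is missing.

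**Where I expect the difficulty to be.** The routine part is the dictionary and the observation that labels disjoint from $\{a,b\}$ commute to the ends. The genuinely delicate step is proving that, in the "common neighbor" case, one can always *legally* expose a shortenable $ab,az,bz$ pattern — that is, that the necessary commutations and Yang--Baxter moves can be sequenced so the bookkeeping constraint (no net square insertions of any label) is respected throughout, and that the resulting shorter walk is still repetition-free so the induction hypothesis applies. I would handle this by choosing $z$ to be, among all common neighbors, one whose occurrence of (say) $az$ is leftmost, commuting everything to the left of it that is disjoint from $a$ out past the anchor, and checking that the three labels immediately following the opening anchor are then forced to be of braid type; a careful case check on whether the next $a$-label or $b$-label comes first, using that $a$ and $b$ move along arcs of $\Cycle_n$, should close it. The second subtle point is the non-domination/"neither" case, where I must rule out a long essential walk; here I would use Proposition~\ref{prop:bipartite}-style parity or, more directly, the fact that if $z$ is never swapped then $z$ sits in a fixed chair throughout $W$, so $a$ and $b$ are confined to the path $\Cycle_n\setminus\{\text{chair of }z\}$, and a path cannot support a nontrivial anchored walk returning all positions to start unless it is itself trivial after reduction.
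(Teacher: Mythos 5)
There is a genuine gap, and in fact two distinct places where the argument would not go through as described.

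First, your central structural claim---``the labels touching neither $a$ nor $b$ can be commuted out to the ends and then cancelled against the anchors''---is false. A label $uv$ with $u,v\notin\{a,b\}$ need not commute past a label $ax$ or $by$ that shares the vertex $u$ or $v$; for instance in a fragment $ab, ac, cd, bd, ab$, the label $cd$ is disjoint from $\{a,b\}$ but does not commute with $ac$ or with $bd$. Eliminating such labels is precisely the hard part: the paper handles it by introducing the \emph{strong} and \emph{weak essential prefixes} of an anchored walk and a three-case analysis of the first label of the suffix, and the crucial subcase (when the offending label is of the form $x_1y_1$ with $x_1,y_1$ already in the prefix) requires a square insertion of a new pair $bx_1,bx_1$, two Yang--Baxter moves, a commutation, and then a nested induction to cancel the inserted pair. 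None of that is captured by ``commute to the ends,'' and the bookkeeping condition in the definition of \emph{reduces} (no net square insertions of any label) is exactly what makes this delicate: one must verify afterward that the inserted pair is eventually deleted.

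Second, your treatment of the ``neither'' case misreads the hypothesis. The assumption is that neither $az$ nor $bz$ appears in the label sequence; this does \emph{not} imply that $z$ never swaps at all, only that $z$ never swaps with $a$ or with $b$. So the claim that ``$z$ sits in a fixed chair throughout $W$'' and that $a,b$ are thereby confined to $\Cycle_n\setminus\{\text{chair of }z\}$ does not follow. The paper's Lemma~\ref{lemma1} proves the ``neither'' case by a direct induction on the length of the strong essential suffix, not via any confinement-to-a-path argument; and even if one did have such confinement, ``a path cannot support a nontrivial anchored walk'' is itself a statement that would need proof in terms of this specific reduction relation. Your ``both'' case is closer to the truth---the Yang--Baxter mechanism you describe is indeed how shortening happens---but exposing the pattern $ab,au,bu$ requires the content of the paper's Lemma~\ref{lemma4.9} (itself a nontrivial induction using Lemma~\ref{lemma1}), and you flag this gap yourself without closing it. In short, the intuitions about Coxeter moves, tracking the chairs of $a$ and $b$, and the role of domination are sound, but the two technical lemmas that carry the real weight of the argument are missing, and the commuting-out step as stated is simply incorrect.
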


In order to prove Theorem~\ref{thm:isocycles}, it will be convenient to consider some particular subsequences of an anchored walk. Let $Y$ be a graph with vertex set $[n]$. Suppose $W$ is a repetition-free anchored walk in $\FS(\Cycle_n,Y)$ whose anchors are the label $ab$, where $a<b$. We define the \dfn{strong essential prefix} of $W$ to be the longest initial subsequence of the label sequence of $W$ of the form $ab,ax_1,\ldots,ax_k,by_1,\ldots,by_\ell$ such that $a,b,x_1,\ldots,x_k,y_1,\ldots,y_\ell$ are all distinct. The \dfn{strong essential suffix} of $W$ is the part of the label sequence of $W$ not in the strong essential prefix. Similarly, we define the \dfn{weak essential prefix} of $W$ to be the longest initial subsequence of the label sequence of $W$ of the form $ab,ax_1,\ldots,ax_k,by_1,\ldots,by_\ell,c_1d_1,\ldots,c_rd_r$ such that $a,b,x_1,,\ldots,x_k,y_1,\ldots,y_\ell$ are all distinct and $c_1,\ldots,c_r,d_1,\ldots,d_r\in\{x_1,\ldots,x_k,y_1,\ldots,y_\ell\}$. The \dfn{weak essential suffix} is the part of the label sequence of $W$ not in the weak essential prefix. Note that the strong essential prefix is a subsequence of the weak essential prefix and that the weak essential suffix is a subsequence of the strong essential suffix.  

\begin{lemma} \label{lemma1}
Let $Y$ be a graph with vertex set $[n]$. Let $W$ be a repetition-free anchored walk in $\FS(\Cycle_n,Y)$ with anchors $ab$, where $a<b$. If there exists $z\in V(Y)\setminus\{a,b\}$ such that neither $az$ nor $bz$ appear in the label sequence of $W$, then $W$ reduces to a trivial anchored walk. 
\end{lemma}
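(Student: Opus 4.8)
The plan is to prove the lemma by induction on the length of $W$ (the number of edges it traverses as an anchored walk); in the shortest case the label sequence is $ab,ab$, which is already trivial. The single structural fact that drives everything is supplied by the hypothesis on $z$: since no swap performed along $W$ carries the label $az$ or $bz$, no swap ever moves $a$ or $b$ across the position of $z$ in $\Cycle_n$, so the cyclic order of the triple $\{a,b,z\}$ around $\Cycle_n$ is changed only by the two anchor swaps labelled $ab$ (here one uses that on a cycle two vertices can trade cyclic positions only by an actual swap). Equivalently, cutting $\Cycle_n$ at the (possibly moving) position of $z$ turns the portion of $W$ strictly between the two anchors into a walk on a \emph{path} graph in which $a$ and $b$ occur in a fixed relative order. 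In particular $a$ and $b$ can never circulate all the way around $\Cycle_n$, which is exactly what excludes the ``complete'' alternative of Theorem~\ref{thm:isocycles} and should force the trivial one.

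\textbf{The inductive step.} Let $\ell$ be the label immediately following the first anchor (the label immediately preceding the last anchor is handled symmetrically); by repetition-freeness $\ell\neq ab$. If $\ell$ is disjoint from $\{a,b\}$, a single commutation move swaps $\ell$ past the first anchor, and as an anchored walk this cuts $\ell$ out, leaving a strictly shorter repetition-free anchored walk with the same anchors that still satisfies the hypothesis with the same $z$; induction then applies. If $\ell$ meets $\{a,b\}$, then $\ell=ax$ or $\ell=bx$ with $x\notin\{a,b,z\}$, and here I would use the path picture to locate the companion label ($bx$, resp.\ $ax$) somewhere in $W$, bring it adjacent to $\ell$ and to the anchor by a sequence of length-preserving commutation moves, and then apply a Yang--Baxter move to the consecutive triple $ab,ax,bx$, replacing it by $bx,ax,ab$; cutting then shortens the walk and we induct. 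Throughout, one should only ever need commutation and Yang--Baxter moves together with the anchored-walk cutting, so no square insertions occur and the square-insertion-versus-deletion bookkeeping in the definition of ``reduces'' is automatic.

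\textbf{Main obstacle.} The crux is to justify that the last manoeuvre is always available: when $W$ is nontrivial and no label adjacent to an anchor is disjoint from $\{a,b\}$, one must always be able to expose (after commutations) a Yang--Baxter triple $ab,ax,bx$, never getting stuck in a complete-walk-type configuration. This is precisely where the confinement of $a$ and $b$ to a path provided by $z$ has to be used in an essential way. I expect to organize this via the strong and weak essential prefixes defined above: one makes the strong essential prefix of $W$ as long as possible using commutation moves, writes it as $ab,ax_1,\dots,ax_k,by_1,\dots,by_\ell$, and then argues that the confinement forces either this prefix to exhaust $W$ (so that $W$ collapses directly to the trivial walk) or the partner sets $\{x_1,\dots,x_k\}$ and $\{y_1,\dots,y_\ell\}$ to overlap, so that commuting the relevant labels together produces a Yang--Baxter triple. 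Making this exhaustion/overlap dichotomy precise, and reconciling it with the repetition-free condition, is the technical heart of the argument.
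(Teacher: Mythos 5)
Your overall strategy matches the paper's: induct on the length, peel labels off after the first anchor, organize around the strong essential prefix $ab,ax_1,\dots,ax_k,by_1,\dots,by_\ell$, and use the observation that since $a$ and $b$ never cross $z$ on the cycle, any label $ax$ appearing in $W$ forces $bx$ to appear as well (and symmetrically). Cases where the next label is disjoint from everything, or meets only $\{a,b\}$, are handled essentially as you describe.

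However, there is a genuine gap in the step you flag as ``the crux,'' and your claim that \emph{no square insertions are ever needed} is false. Consider the case where the first label outside the strong essential prefix is of the form $x_1y_1$ (it involves one element from $\{x_1,\dots,x_k\}$ and one from $\{y_1,\dots,y_\ell\}$, but neither $a$ nor $b$). You propose to locate the companion label $bx_1$ elsewhere in $W$ and commute it into position to form a Yang--Baxter triple with $ab,ax_1$. But $bx_1$ cannot be commuted leftward past $by_1,\dots,by_\ell$ (these share $b$) nor past $x_1y_1$ (this shares $x_1$), so ``bring it adjacent by length-preserving commutation moves'' is simply unavailable. The paper resolves exactly this situation by performing a square insertion of $bx_1,bx_1$ between $ax_1$ and $by_1$, then two Yang--Baxter moves (on $ab,ax_1,bx_1$ and on $bx_1,by_1,x_1y_1$), followed by a commutation; this strictly advances the induction but leaves a spare pair of $bx_1$ labels that must subsequently be cancelled.

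That cancellation is not bookkeeping that takes care of itself; it is a recursive invocation of the lemma being proved. The inserted $bx_1$ and the original occurrence of $bx_1$ in $W$ bound a shorter repetition-free anchored walk with anchors $bx_1$, to which the lemma is applied inductively (noting that $ab$ plays the role of the ``missing pair'' for the new anchors), yielding a trivial anchored walk and hence a square deletion that matches the earlier square insertion. This recursive structure---insert, rearrange, and then call the lemma on an inner anchored walk to justify the deletion required by the definition of ``reduces''---is the technical heart of the argument, and your sketch, by asserting that square insertions do not occur, bypasses rather than solves the obstacle you correctly identified.
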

\begin{proof}
We proceed by induction on the length of $W$. By definition, $W$ is a sequence $\sigma_0,\ldots,\sigma_p$ of vertices in $\FS(\Cycle_n,Y)$. One should think of each vertex $\sigma_i$ as an arrangement of the vertices of $Y$ on the vertices of $\Cycle_n$. In the starting vertex $\sigma_0$, the vertices $a$ and $b$ are sitting on adjacent vertices of $\Cycle_n$, and they swap places when we move to $\sigma_1$. At the end of the walk, we reach $\sigma_p$ from $\sigma_{p-1}$ by swapping $a$ and $b$ again. We have assumed that the label sequence does not contain $az$ or $bz$. This readily implies that if the label sequence of $W$ contains a label of the form $ax$ with $x\neq b$, then it must also contain $bx$. Similarly, if the label sequence contains a label $by$ such that $y\neq a$, then it also contains $ay$. This shows that it suffices to prove that $W$ reduces to an anchored walk whose label sequence does not contain $az$ or $bz$ and whose strong essential suffix is just the single label $ab$ since such an anchored walk must be trivial. Let $m$ be the length of the strong essential suffix of $W$; we may assume $m\geq 2$ since otherwise we are done. By induction on $m$, it suffices to show that $W$ reduces to an anchored walk $W'$ whose label sequence does not contain $az$ or $bz$ and whose strong essential suffix has length at most $m-1$. 

Let $ab, ax_1,...,ax_k, by_1,...,by_\ell$ be the strong essential prefix of $W$. Let $uv$ be the first label in the strong essential suffix of $W$. In other words, $u$ and $v$ are the two vertices of $Y$ that take part in the friendly swap used to get from $\sigma_{k+\ell+1}$ to $\sigma_{k+\ell+2}$. We know that $uv\neq ab$ because $m\geq 2$. Let $Q$ be the part of the strong essential suffix of $W$ that comes after the label $uv$. We now consider three cases. 

\medskip
\noindent {\bf Case 1.} Suppose $\{u,v\}\cap\{a,b,x_1,\ldots,x_k,y_1,\ldots,y_\ell\}=\emptyset$. In this case, we can repeatedly apply commutation moves so that each move swaps the label $uv$ with the label to its left. After we perform $k+\ell+1$ such moves, the label $uv$ will leave the anchored walk, resulting in a new anchored walk $W'$ whose strong essential suffix is contained in $Q$.

\medskip
\noindent {\bf Case 2.} Suppose $\{u,v\}\cap\{x_1,\ldots,x_k,y_1,\ldots,y_\ell\}=\emptyset$ and $\{u,v\}\cap\{a,b\}\neq\emptyset$. We will assume $u=a$ since the other case is similar. We can perform commutation moves to transform the label sequence of $W$ into $ab,ax_1,\ldots,ax_k,uv,by_1,\ldots,by_\ell,Q$; this is the label sequence of the desired anchored walk $W'$ (note that its strong essential suffix has length at most $m-1$ because its strong essential prefix contains $ab,ax_1,\ldots,ax_k,uv,by_1,\ldots,by_\ell$). 

\medskip
\noindent {\bf Case 3.} Suppose $\{u,v\}\cap\{x_1,\ldots,x_k,y_1,\ldots,y_\ell\}\neq\emptyset$. We will assume that $\{u,v\}\cap\{y_1,\ldots,y_\ell\}\neq\emptyset$; the case when $\{u,v\}\cap\{x_1,\ldots,x_k\}\neq\emptyset$ is similar. Let $t$ be the largest index such that $y_t\in\{u,v\}$. Without loss of generality, we may assume $y_t=v$. The fact that $W$ is repetition-free implies that $u\neq b$. Also, the maximality of $t$ guarantees that $u\neq y_{t+1}$ (when $t<\ell$). Since $\sigma_{k+\ell+2}$ is obtained from $\sigma_{k+\ell+1}$ by performing a friendly swap involving the vertices $u$ and $v=y_t$, we know that $\sigma_{k+\ell+1}^{-1}(u)$ must be one of the two vertices of $\Cycle_n$ adjacent to $\sigma_{k+\ell+1}^{-1}(y_t)$. One of these vertices is either $\sigma_{k+\ell+1}^{-1}(b)$ (if $t=\ell$) or $\sigma_{k+\ell+1}^{-1}(y_{t+1})$ (if $t<\ell$); since $u$ is not $b$ or $t_{t+1}$, it follows that $\sigma_{k+\ell+1}^{-1}(u)$ is the other of these two vertices. 

Suppose first that $t\geq 2$. Then $u=y_{t-1}$. We can perform a sequence of commutation moves in order to move the label $uv$ to the left until it is immediately to the right of $by_t$. We can then perform a Yang--Baxter move to transform the subsequence $by_{t-1},by_t,uv$ into $uv,by_t,by_{t-1}$. Finally, since the sets $\{u,v\}$ and $\{a,b,x_1,\ldots,x_k,y_1,\ldots,y_{t-2}\}$ are disjoint, we can apply further commutation moves in order to move $uv$ to the left until it leaves the anchored walk. This results in the desired anchored walk $W'$. 

Next, suppose $t=1$ and $k=0$. Then $u=a$. We can use commutation moves to transform the label sequence of $W$ into $ab,by_1,uv,by_2,\ldots,by_\ell,Q$. Since $uv=ay_1$, we can use a Yang--Baxter move to turn this sequence into $uv,by_1,ab,by_2,\ldots,by_\ell,Q$. The label sequence of the new anchored walk $W'$ does not include the labels $uv$ and $by_1$, so its strong essential suffix has length at most $m-1$. Furthermore, the label sequence of $W'$ does not contain $az$ or $bz$. 

Finally, suppose $t=1$ and $k\geq 1$. Then $u=x_1$, so $uv=x_1y_1$. Let $Q'$ be the sequence $ax_2,\ldots, ax_k, by_2,\ldots, by_\ell, Q$. Because $ax_1$ is in the label sequence of $W$, it follows from the first paragraph of this proof that $bx_1$ appears in $Q$; hence, $b$ and $x_1$ must be adjacent in $Y$. Furthermore, we know $\sigma_2^{-1}(b)$ is adjacent to $\sigma_2^{-1}(x_1)$.
Then we can use a square insertion to change $ab, ax_1, by_1, x_1y_1, Q'$ into $ab, ax_1, bx_1, bx_1, by_1, x_1y_1, Q'$. Applying Yang--Baxter moves to the subsequences $ab, ax_1, bx_1$ and $bx_1, by_1, x_1y_1$ yields $bx_1, ax_1, ab, x_1y_1, by_1, bx_1, Q'$. Another commutation move then yields $bx_1, ax_1, x_1y_1, ab, by_1, bx_1, Q'$.
This shows that we can apply Coxeter moves to transform $W$ into an anchored walk whose label sequence is $ab, by_1, bx_1,ax_2,\ldots,ax_k,by_2,\ldots,by_\ell,Q$. Performing additional commutation moves yields an anchored walk $W_0$ whose label sequence is $ab,ax_2,\ldots,ax_k,by_1,bx_1,by_2,\ldots,by_\ell,Q$. Now, to meet the definition of what it means for an anchored walk to \emph{reduce} to another, we must show that we can perform additional Coxeter moves to delete a pair of labels $bx_1$. As mentioned before, the label $bx_1$ must appear in $Q$. Let $W_1$ be the anchored walk whose label sequence is the subsequence of $W_0$ starting and ending at the labels $bx_1$. Note that $W_1$ is repetition-free and does not use the labels $az$ or $bz$. By induction on the length of the anchored walk, we find that $W_1$ reduces to a trivial anchored walk. This means that we can perform a sequence of Coxeter moves to the label sequence of $W_0$ in order to move the two occurrences of the label $bx_1$ next to each other. We can then perform a square deletion to remove the two occurrences of $bx_1$. Let $W_2$ be the resulting anchored walk with anchors $ab$ (the same as the anchors of $W_0$). When we reduced $W_1$ to the trivial anchored walk, all labels that we added by square insertions were later deleted by square deletions (by the definition of \emph{reducing}). This implies that $W$ reduces to $W_2$, that $W_2$ is repetition-free, and that $W_2$ does not use the labels $az$ or $bz$. Since the length of $W_2$ is strictly less than that of $W$, we can use induction to see that $W_2$, and hence also $W$, reduces to a trivial anchored walk (whose strong essential suffix has length $1$).  
\end{proof}

\begin{lemma}\label{lemma4.9}
Let $W$ be a repetition-free anchored walk in $\FS(\Cycle_n,Y)$. Let \[ab, ax_1, \ldots, ax_k, by_1, \ldots, by_\ell, c_1d_1, \ldots, c_rd_r\] be the weak essential prefix of $W$, where $c_1, \ldots, c_r, d_1, \ldots, d_r \in \{x_1, \ldots, x_k, y_1, \ldots, y_\ell\}$. If $k \ge 1$ and the weak essential suffix of $W$ begins with $bu$ for some $u \in \{x_1,\ldots,x_k\}$, then $W$ reduces to a trivial anchored walk. Similarly, if $\ell \ge 1$ and the weak essential suffix of $W$ begins with $au$ for some $u \in \{y_1,\ldots,y_\ell\}$, then $W$ reduces to a trivial anchored walk.
\end{lemma}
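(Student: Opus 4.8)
The plan is to reduce $W$ to a strictly shorter repetition-free anchored walk and induct on length, using the same toolkit (one carefully placed square insertion, followed by Yang--Baxter moves, commutation moves, and the truncation convention for anchored walks) that drives the case $t=1,\,k\ge1$ of Case~3 in the proof of Lemma~\ref{lemma1}, together with Lemma~\ref{lemma1} itself applied to a sub-walk. I would prove the first statement; the second is handled symmetrically. Write the label sequence of $W$ as
\[ab,\ ax_1,\ \ldots,\ ax_k,\ by_1,\ \ldots,\ by_\ell,\ c_1d_1,\ \ldots,\ c_rd_r,\ bx_j,\ Q,\]
where the part up to $c_rd_r$ is the weak essential prefix, $bx_j$ with $x_j\in\{x_1,\ldots,x_k\}$ is the first label of the weak essential suffix, and $Q$ (which ends with the anchor $ab$) is the rest; we induct on the length of $W$.

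First I would analyze where the swaps occur on $\Cycle_n$. Viewing each vertex of $\FS(\Cycle_n,Y)$ as an arrangement of $V(Y)$ on $\Cycle_n$ and tracing through the walk, one finds that after the swap $ab$ the labels $ax_1,\ldots,ax_k$ march $a$ steadily in one direction while $b,x_1,\ldots,x_k$ come to occupy a contiguous arc, and then $by_1,\ldots,by_\ell$ march $b$ in the opposite direction. Since each $c_id_i$ has both endpoints among $\{x_1,\ldots,x_k,y_1,\ldots,y_\ell\}$, the corresponding swaps only permute the vertices inside the arc bounded by $a$ and $b$, never moving $a$ or $b$. Because $bx_j$ is performed right after all the $c_id_i$, the vertex $x_j$ must occupy the end of this arc that is $\Cycle_n$-adjacent to $b$ at that moment; tracing back, if $c_md_m$ is the last $c_id_i$ that involves $x_j$, then $c_md_m=\{x_j,w\}$ for some vertex $w$, and at the state reached just after $c_md_m$ the vertices $b$ and $x_j$ are $\Cycle_n$-adjacent. (If no $c_id_i$ involves $x_j$, one checks that this forces $\ell=0$ and $j=1$, and that $b$ and $x_1$ are already $\Cycle_n$-adjacent just after the label $ax_1$; this degenerate case is handled below with $c_md_m$ replaced by $ax_1$.) Since $bx_j$ is an edge label, $b$ and $x_j$ are also adjacent in $Y$.

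With this in hand, I would perform a single square insertion of the label $bx_j$ at the state just after $c_md_m$, then slide one of the two new copies rightward by commutation moves past $c_{m+1}d_{m+1},\ldots,c_rd_r$ (none of which involves $b$ or $x_j$, by the maximality of $m$) until it merges with the original occurrence of $bx_j$, and cancel that pair by a square deletion. This relocates $bx_j$ to lie immediately after $c_md_m$, at a net cost of one square insertion and one square deletion of $bx_j$. From here I would mimic the case $t=1,\,k\ge1$ of Case~3 in the proof of Lemma~\ref{lemma1}: combining Yang--Baxter moves (using that $ab$, each $ax_i$, $c_md_m$, and $bx_j$ are edge labels) with commutation moves and the convention that labels pushed past the first anchor are truncated, one shows that $W$ reduces to a strictly shorter anchored walk $W'$ with anchors $ab$. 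The walk $W'$ will contain a repeated label (one copy created by the Yang--Baxter rewriting and another copy already lying further along in $Q$, located via the same geometric picture). Letting $W_1$ be the repetition-free sub-anchored-walk of $W'$ running between these two copies, one checks that $W_1$ satisfies the hypothesis of Lemma~\ref{lemma1} -- there is a vertex $z$ of $Y$ not appearing in the label sequence of $W_1$ unless $n$ is very small -- so $W_1$ reduces to a trivial anchored walk. This lets us bring the two copies together and delete them, yielding an even shorter anchored walk $W_2$ that $W$ reduces to; applying the inductive hypothesis to $W_2$ then finishes the proof. One checks throughout that, for every label, the total number of square insertions is matched by square deletions, so the accounting in the definition of ``reduces'' is respected.

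The main obstacle is exactly this bookkeeping: choosing the insertion point so that all remaining rewriting can be carried out with commutation and Yang--Baxter moves and the anchor-truncation convention, and tracking square insertions against square deletions of each label. A secondary nuisance is the handful of degenerate small-$n$ situations, where the arc of length $k+\ell+2$ wraps around all of $\Cycle_n$ and the geometric picture above must be adjusted by hand.
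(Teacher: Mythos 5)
The high-level plan (geometric tracking of $a$, $b$, and the arc; rewriting via Coxeter moves; locating repeated labels and invoking Lemma~\ref{lemma1} on the resulting sub-anchored-walk; accounting for square insertions against deletions) is the right framework, and the geometric observations you make about $x_j$ and $c_md_m$ are correct. However, there are two concrete gaps that keep the sketch from being a proof.

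First, the core rewriting mechanism is missing. Your plan relocates $bx_j$ (via commutation) to just after $c_md_m$ and then proposes to ``mimic the case $t=1,\,k\geq 1$ of Case~3 of Lemma~\ref{lemma1},'' but that case is built around a label of the form $x_1y_1$ sitting \emph{after} $ab,ax_1$, and it exploits that $ax_1$ is \emph{already} adjacent to the anchor $ab$. Here the suffix begins with $bx_j$, and when $j>1$ the label $ax_j$ is not adjacent to $ab$; no Yang--Baxter move of the form $pq,pr,qr$ is available at the spot where you have relocated $bx_j$. The paper's proof therefore runs a preliminary phase whose entire purpose is to move $ax_j$ so that it sits directly after $ab$: one square-inserts $x_{j'}x_j$ (for $j'=j-1,j-2,\ldots,1$), Yang--Baxters it with $ax_{j'},ax_j$, and commutes the leftover copy out through the anchor, then cancels each inserted $x_{j'}x_j$ against the copy of that same label that the geometric argument guarantees appears later in the walk, using Lemma~\ref{lemma1} to bring the two copies together. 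This phase is what handles general $j$, and nothing in your proposal replaces it. Only after $au=ax_j$ is adjacent to $ab$ can one perform the decisive move (square-insert $bu,bu$ before $ab,au$, Yang--Baxter to get $bu,au,ab,bu$, and truncate), which creates the inner anchored walk with anchors $bu$; this final trick is also absent from your sketch.

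Second, your justification for applying Lemma~\ref{lemma1} to the inner walk $W_1$ is not the right criterion and is not in general true. You claim ``there is a vertex $z$ of $Y$ not appearing in the label sequence of $W_1$ unless $n$ is very small,'' but Lemma~\ref{lemma1} only needs that, for the anchors $pq$ of $W_1$, some $z$ has neither $pz$ nor $qz$ in the label sequence — and finding a $z$ that appears in \emph{no} label is typically false when $W_1$ is long (regardless of $n$). The paper sidesteps this by arranging, through the rewriting, that the two labels $au$ and $ax_{j'-1}$ lie \emph{outside} the inner walk $W_{j'-1}$ with anchors $ux_{j'-1}$, so that $z=a$ always works; and at the very end, that neither $au$ nor $bu$ survives, so $z=u$ works on the outer walk. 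Without a concrete choice of $z$ tied to the structure of the walk (rather than to the size of $n$), the inner applications of Lemma~\ref{lemma1} are unjustified and the induction cannot close.
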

\begin{proof}
We prove only the first statement since the second is similar. Our strategy is to manipulate $W$ through Coxeter moves and apply Lemma~\ref{lemma1}. Let $W$ be the sequence of vertices $\sigma_0, \ldots, \sigma_p$. First, we wish to apply Coxeter moves to move $au$ left until it appears directly after the first anchor. This is already true if $u = x_1$. If $u = x_i$, where $i \ge 2$, then let $Q$ be the subsequence of the label sequence of $W$ consisting of all labels before $ax_{i-1}$, and let $R$ be the subsequence consisting of all labels after $au$. Because $W$ is repetition-free, the labels $ux_1,\ldots,ux_{i-1}$ must all occur in the label sequence between the labels $au$ and $bu$.
%\CD{Actually, I changed my mind. I think this is fine.}
%; this is the only way that $\sigma_{k+1}^{-1}(u)$ is adjacent to $\sigma_{k+1}^{-1}(a)$ and $\sigma_{k+l+r+1}^{-1}(u)$ is adjacent to $\sigma_{k+l+r+1}^{-1}(b)$. 
This implies that $\{u, x_j\} \in E(Y)$ for all $j \in [i - 1]$. Thus, to move the label $ax_i$ before $ax_{i-1}$, we can use a square insertion to turn the subsequence $ax_{i-1}, au$ into $x_{i-1}u, x_{i-1}u, ax_{i-1}, au$, followed by a Yang-Baxter move on the last three labels, which changes this into $x_{i-1}u, au, ax_{i-1}, x_{i-1}u.$ The first occurrence of $x_{i-1}u$ is disjoint from all labels in $Q$, so we remove it from $W$ via commutation moves. The sequence now begins with $Q, au, ax_{i-1}, x_{i-1}u$. We repeat this process until $au$ appears directly after $ab$, obtaining a new anchored walk $W'$ with the label sequence $ab, au, ax_1, ux_1, ax_2, ux_2, \ldots, ax_{i-1}, ux_{i-1}, R$. Now, we must delete all edges that we added with square insertions. For $j \in [i-1]$, let $W_j$ be the anchored walk with the two occurrences of $ux_j$ in $W'$ as its anchors (the label $ux_j$ occurs exactly once in $R$ for each $j$). The only repeated labels in $W'$ besides the anchors are $ux_j$ for $j \in [i-1]$, so there are no repeated labels in $W_{i-1}$ besides the anchors. Furthermore, the labels $au$ and $ax_{i-1}$ are outside of $W_{i-1}$. Thus, by Lemma \ref{lemma1}, $W_{i-1}$ reduces to a trivial anchored walk, and we can remove its anchors with a square deletion. Now that the labels $ux_{i-1}$ have been deleted, the same argument applies to the anchored walk $W_{i-2}$, and so on; in this way, for each $j \in [i-1]$, we can use a square deletion to remove a pair of edge labels $ux_j, ux_j$. Thus, $W$ reduces to a new anchored walk $W''$ that begins with $ab, au$ and contains $bu$.

Having $au$ directly after $ab$ allows us to use another square insertion to change the subsequence $ab, au$ into $bu, bu, ab, au$. Applying a Yang-Baxter move transforms this into $bu, au, ab, bu$. Now, because $W'$ initially contained the label $bu$, the new anchored walk now contains two instances of $bu$; we have created an anchored walk $B$ within $W''$ with anchors $bu$. Both $au$ and $ab$ appear outside of $B$, so by Lemma \ref{lemma1}, $B$ reduces to a trivial anchored walk, and its anchors can be removed with a square deletion. The labels $bu$ and $au$ do not appear in the resulting anchored walk. Thus, $W''$ reduces to a trivial anchored walk, so the initial walk $W$ also reduces to a trivial anchored walk.
\end{proof}

Now, we are ready to prove the main theorem. 

\begin{proof}[Proof of Theorem \ref{thm:isocycles}]
If there is some $z$ such that neither $az$ nor $bz$ appear in $W$, then $W$ reduces to a trivial anchored walk by Lemma~\ref{lemma1}. For all other $W$, we use induction show that $W$ reduces to an anchored walk that either satisfies the hypothesis of Lemma~\ref{lemma4.9} or has only the ending anchor $ab$ as its weak essential suffix. Let $ab, ax_1, \ldots, ax_k, by_1, \ldots, by_\ell, c_1d_1, \ldots, c_rd_r$ be the weak essential prefix of $W$, where $c_1, \ldots, c_r, d_1, \ldots, d_r \in \{x_1, \ldots, x_k, y_1, \ldots, y_\ell\}$. Let $m$ be the length of the weak essential suffix of $W$. If $m=1$, then we are done. If $m \ge 2$, we consider the first label $uv$ of the weak essential suffix. If $k \ge 1$ and this label is $bu$ for some $u \in \{x_1,\ldots,x_k\}$ or $\ell \ge 1$ and this label is $au$ for some $u \in \{y_1,\ldots,y_\ell\}$, then we are done; $W$ reduces to a trivial anchored walk by Lemma~\ref{lemma4.9}. Otherwise, we consider three cases in a similar way as in the proof of Lemma~\ref{lemma1}.

\medskip
\noindent {\bf Case 1.} Suppose $\{u,v\}\cap\{a,b,x_1,\ldots,x_k,y_1,\ldots,y_\ell\}=\emptyset$. We can apply commutation moves to remove $uv$ from the anchored walk in a similar way as in the proof of Lemma~\ref{lemma1}.

\medskip
\noindent {\bf Case 2.} Suppose $\{u,v\}\cap\{x_1,\ldots,x_k,y_1,\ldots,y_\ell\}=\emptyset$ and $\{u, v\}\cap\{a, b\} \neq\emptyset$. We can again handle this case in a similar way as in Lemma~\ref{lemma1} by performing commutation moves to move $uv$ directly after $ax_k$.

\medskip
\noindent {\bf Case 3.} Suppose $\{u,v\}\cap\{x_1,\ldots,x_k,y_1,\ldots,y_\ell\} \neq \emptyset$. Using the same logic as in the proof of Lemma~\ref{lemma1}, we see that the only possibilities for $uv$ are $ay_1$ if $k=0$ and $\ell \ge 1$, $bx_1$ if $k \ge 1$ and $\ell = 0$, $x_tx_{t-1}$ for some $t \ge 2$, $y_ty_{t-1}$ for some $t \ge 2$, or $x_1y_1$. The cases where $uv$ = $ay_1$ or $bx_1$ are both handled by Lemma~\ref{lemma4.9}. By the definition of a weak essential prefix, $uv$ cannot be $x_tx_{t-1}$, $y_ty_{t-1}$, or $x_1y_1$.

If, throughout the inductive process, $W$ never satisfies the condition in Lemma~\ref{lemma4.9}, then the weak essential suffix of $W$ is just the ending anchor $ab$. We can then apply commutation moves to move each label $c_id_i$ so that they appear after $ab$. Now, the strong essential suffix of the resulting anchored walk $W'$ is $ab$, so $W'$ must be trivial or complete. Furthermore, we know that for each $x \not\in\{ a, b\}$, either $ax$ or $bx$ (not both) must occur in $W'$, which forces $W'$ to be a complete anchored walk.
\end{proof}

Now, we can apply Theorem~\ref{thm:isocycles} to obtain results for when the domination number of $Y$ is at least 3. Recall that a \dfn{geodesic path} in a graph $G$ is a path that has minimum length among all paths with the same endpoints. 

\begin{proposition} \label{prop1.6}
Let $Y$ be an $n$-vertex graph with domination number at least $3$. Let $W$ be a geodesic path in $\FS(\Cycle_n,Y)$. Then no two edges used in $W$ have the same label. 
\end{proposition}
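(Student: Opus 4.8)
The plan is to argue by contradiction. Suppose $W$ is a geodesic in $\FS(\Cycle_n,Y)$ whose label sequence has a repeated label. Among all pairs of edges of $W$ carrying the same label, choose one for which the sub-walk $W'$ of $W$ running from the first of these edges to the second (and including both) has the fewest edges. The first and last edges of $W'$ have a common label, say $ab$, and $a\neq b$ because $ab$ is the label of an edge of $\FS(\Cycle_n,Y)$; thus $W'$ is an anchored walk with anchors $ab$, and the minimality of $W'$ forces every label other than $ab$ to appear in it exactly once, so $W'$ is repetition-free. Since $Y$ has domination number at least $3$, the size-$2$ set $\{a,b\}$ is not a dominating set of $Y$; hence there is no complete anchored walk with anchors $ab$ (the existence of one would force $\{a,b\}$ to be a dominating set). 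By Theorem~\ref{thm:isocycles}, $W'$ therefore \emph{reduces to a trivial anchored walk}: its label sequence can be turned into $ab,ab$ by a finite sequence of Coxeter moves in which, for each label $xy$, the number of square insertions inserting $xy,xy$ is at most the number of square deletions deleting $xy,xy$.

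Next I would transfer this reduction back to the ambient walk $W$. Every Coxeter move used in reducing an anchored walk is, when read on the underlying ordinary walk, an honest square insertion, square deletion, commutation move, or Yang--Baxter move; the only special feature of the anchored setting is that a label can be ``cut out'' of the walk once it is slid past an anchor, but sliding a label past the anchor $ab$ is literally a commutation move (it is available precisely because that label is disjoint from $ab$), hence also an honest Coxeter move. Because Coxeter moves on ordinary walks preserve both endpoints, carrying out this whole sequence of moves on $W$ --- acting within the segment originally occupied by $W'$, together with the commutations that slide labels past the current anchor --- yields a walk $\widehat W$ with the same endpoints as $W$ whose label sequence has two consecutive entries equal to $ab$.

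Finally I would do the length bookkeeping and conclude. Commutation and Yang--Baxter moves do not change the number of edges, and summing the condition in the definition of ``reduces'' over all labels $xy$ shows that the total number of square insertions is at most the total number of square deletions, so $\widehat W$ has at most as many edges as $W$. One further square deletion removing the two consecutive edges labelled $ab$ then produces a walk between the two endpoints of $W$ with strictly fewer edges than $W$, contradicting the fact that $W$ is a geodesic. I expect the transfer step of the second paragraph to be the main obstacle: one must verify that \emph{every} step of an anchored-walk reduction --- especially the re-anchoring that shortens the walk --- corresponds to a bona fide Coxeter move on the ambient walk, and one must keep careful track of the anchor label $ab$ when square insertions temporarily introduce extra copies of it. This bookkeeping is exactly what the per-label inequality in the definition of ``reduces'' is designed to control.
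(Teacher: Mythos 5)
Your argument is correct and follows the paper's approach exactly: isolate a minimal repetition-free anchored sub-walk $W'$ with anchors $ab$, use the non-domination of $\{a,b\}$ to invoke Theorem~\ref{thm:isocycles} and reduce $W'$ to a trivial anchored walk, transfer the Coxeter moves to the ambient walk $W$, and delete the resulting consecutive pair $ab,ab$ to obtain a strictly shorter walk with the same endpoints, contradicting geodesicity. The one small inaccuracy is in your transfer paragraph: labels need not exit an anchored walk only by commuting past an anchor --- in the proof of Lemma~\ref{lemma1} (the cases $t=1$, $k=0$ and $t=1$, $k\geq 1$), labels are also cut off by a Yang--Baxter move that pushes an $ab$ label inward, re-anchoring the walk --- but this does not affect your argument, since a Yang--Baxter move is still a genuine Coxeter move on the ambient walk and preserves its endpoints.
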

\begin{proof}
Suppose instead that there is a label that appears twice in the label sequence of $W$. Then we can find a subsequence of the label sequence of $W$ that is the label sequence of a repetition-free anchored walk $W'$. Let $ab$ be the anchor label of $W'$. Since $\{a,b\}$ is not a dominating set of $Y$, there must be a vertex $z$ of $Y$ that is not adjacent to $a$ or $b$ in $Y$. Then the labels $az$ and $bz$ cannot appear in the label sequence of $W'$, so it follows from Theorem~\ref{thm:isocycles} that $W'$ reduces to a trivial anchored walk. Once we perform a sequence of Coxeter moves to reduce $W'$ to a trivial anchored walk, we can delete the two consecutive labels $ab$. We can think of performing all of the Coxeter moves on the walk $W$ (which is not necessarily an anchored walk). These moves transform $W$ into a new walk with the same endpoints as $W$ that uses fewer edges than $W$, contradicting the assumption that $W$ is a geodesic path.  
\end{proof}

\begin{proposition}\label{prop1.5}
Suppose $Y$ is an $n$-vertex graph with domination number at least $3$, and let $C$ be a cycle in $\FS(\Cycle_n,Y)$. For every edge $e$ in $C$, there exists an edge $e'\neq e$ in $C$ such that $\psi(e)=\psi(e')$. 
\end{proposition}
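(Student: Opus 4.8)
\emph{Proof proposal.} The plan is to argue by contradiction. Suppose some edge of $C$ has a label occurring on no other edge of $C$, and choose $C$ to have minimum length among all cycles of $\FS(\Cycle_n,Y)$ having such an edge. Fix such an edge $e$, write its label as $ab$, and let $u$ and $v=(a\,\,b)\circ u$ be its endpoints.

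\emph{Step 1 (reduce to the case that all labels of $C$ are distinct).} Let $Y'$ be $Y$ with the edge $\{a,b\}$ removed; deleting an edge cannot decrease the domination number, so $Y'$ also has domination number at least $3$. Since $ab$ occurs only on $e$, the path $C-e$ uses no edge labeled $ab$, so it is a path from $u$ to $v$ in $\FS(\Cycle_n,Y')$. Let $G$ be a geodesic of $\FS(\Cycle_n,Y')$ from $u$ to $v$; by Proposition~\ref{prop1.6} applied to $Y'$, the labels of $G$ are distinct and none of them is $ab$, so $G\cup\{e\}$ is a cycle of $\FS(\Cycle_n,Y)$ in which $ab$ occurs exactly once, all labels are distinct, and whose length is at most $\abs{E(C)}$. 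If it were strictly shorter than $C$ this would contradict minimality, so it has length $\abs{E(C)}$; then $C-e$ itself is a geodesic of $\FS(\Cycle_n,Y')$ and hence, again by Proposition~\ref{prop1.6}, has distinct labels, whence so does $C$. Since the edge labels of any closed walk multiply to the identity permutation, every cycle of $\FS(\Cycle_n,Y)$ has even length, so $L:=\abs{E(C)}\ge 4$.

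\emph{Step 2 (produce and resolve an anchored walk).} Traverse $C$ as $\sigma_0\xrightarrow{e}\sigma_1\to\cdots\to\sigma_L=\sigma_0$, and append one more copy of $e$ to obtain a walk $W$ from $\sigma_0$ to $\sigma_1=(a\,\,b)\circ\sigma_0$ with label sequence $ab,\ell_2,\ldots,\ell_L,ab$. Because the labels of $C$ are distinct, $W$ is already a repetition-free anchored walk with anchors $ab$. Since the domination number of $Y$ is at least $3$, the set $\{a,b\}$ is not a dominating set of $Y$, so there is a vertex $z$ adjacent in $Y$ to neither $a$ nor $b$; then $az$ and $bz$ are not edge labels at all and in particular do not appear in $W$. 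By Theorem~\ref{thm:isocycles}, $W$ reduces to a trivial anchored walk.

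\emph{Step 3 (derive a contradiction).} Carrying the Coxeter moves that realize this reduction out on the underlying ordinary walk — where a label that leaves the region between the two anchors is not deleted but relocated past an anchor, and where the bookkeeping in the definition of ``reduces'' forces every square-inserted label to be square-deleted again, so that no net copy of $ab$ is created and the two anchor copies survive — and then performing the single square deletion of the two resulting consecutive labels $ab$, transforms $W$ into a walk $\widetilde W$ from $\sigma_0$ to $\sigma_1=(a\,\,b)\circ\sigma_0$ of length $L-1$ whose labels are exactly $\ell_2,\ldots,\ell_L$ and which therefore uses no edge labeled $ab$. Reattaching $e$ makes $\widetilde W\cup\{e\}$ a closed walk of $\FS(\Cycle_n,Y)$ in which $ab$ occurs exactly once, from which one extracts a cycle again having a uniquely labeled edge. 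The main obstacle is that this extracted cycle need not be strictly shorter than $C$ (indeed $\widetilde W\cup\{e\}$ has length $L$), so closing the argument requires more than the length bookkeeping above; the route I would pursue is to combine the minimality of $L$ with the observation that $\widetilde W$ and $C-e$ are then both geodesics of $\FS(\Cycle_n,Y')$ between the same pair of vertices, and to rule out such a $\widetilde W$ by tracking the cyclic positions of the two people $a$ and $b$ around $\Cycle_n$ (the number of times their configuration winds around being controlled by how often the label $ab$ is used) — this winding/position analysis, which genuinely uses that the moves are adjacent transpositions on a cycle rather than arbitrary transpositions, is the crux.
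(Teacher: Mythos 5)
Your Step~3 contains the gap, and you acknowledge it yourself: after square-deleting the consecutive anchors you have a walk $\widetilde W$ from $\sigma_0$ to $\sigma_1=(a\,\,b)\circ\sigma_0$ avoiding the label $ab$, but you stop short of showing that such a walk cannot exist. The ``winding/position analysis'' you point to at the end is in fact the entire content of the proposition, and once it is carried out you will find that Steps~1 and~2 were never needed: under the hypothesis to be contradicted, the walk around $C$ avoiding $e$ is \emph{already} a walk from $\tau$ to $\tau'=(a\,\,b)\circ\tau$ using no edge labeled $ab$. You do not need Proposition~\ref{prop1.6}, Theorem~\ref{thm:isocycles}, minimality of $C$, or distinctness of its labels to manufacture such a walk --- one is handed to you for free --- so the detour only reproduces the input. (Your Step~1 is correct as far as it goes; it is just not useful here. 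The claim in Step~3 that the Coxeter reduction of the anchored walk can be mirrored on the ordinary walk so that exactly one square deletion of $ab,ab$ survives also needs justification, but that point is moot.)

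The missing argument, which is what the paper does, is short and elementary. View the vertices of $Y$ as people standing on $\Cycle_n$. At $\tau$, the people $a$ and $b$ occupy adjacent cycle positions, and at $\tau'$ they have exchanged positions; along the walk around $C$ avoiding $e$ they never swap with each other. Consider the clockwise arc of $\Cycle_n$ strictly between $a$'s current position and $b$'s current position. As long as $a$ and $b$ do not swap with each other, the set of people in this arc changes only when someone swaps with $a$ or with $b$, and the arc begins empty but must end containing all $n-2$ other people. Hence every $z\in V(Y)\setminus\{a,b\}$ swaps with $a$ or with $b$ at some step, so $z$ is adjacent to $a$ or $b$ in $Y$. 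Since also $\{a,b\}\in E(Y)$, the set $\{a,b\}$ is a dominating set of $Y$, contradicting the hypothesis that $Y$ has domination number at least~$3$. This direct argument supplants the entire proposal.
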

\begin{proof}
Let $e=\{\tau,\tau'\}$, and let $\psi(e)=\{a,b\}\in E(Y)$. Let $W$ be the unique walk in $\FS(\Cycle_n,Y)$ that starts at $\tau$, ends at $\tau'$, and uses each edge in $C$ other than $e$ exactly once. We want to show that one of the edges in $W$ has edge label $\{a,b\}$. It is convenient to once again view the vertices of $Y$ as people who are walking on $\Cycle_n$. The arrangement $\tau'$ is obtained from $\tau$ by swapping the people $a$ and $b$, who are standing on adjacent vertices of $\Cycle_n$. Without loss of generality, say $a=\tau(1)=\tau'(2)$ and $b=\tau(2)=\tau'(1)$. As we traverse the walk $W$, we can keep track of the positions of $a$ and $b$ on the cycle. It is straightforward to see that if none of the edges in this walk have label $\{a,b\}$, then each person in $V(Y)\setminus\{a,b\}$ must have swapped with either $a$ or $b$. However, since $\{a,b\}\in E(Y)$, this would imply that $\{a, b\}$ is a dominating set of $Y$, contradicting our assumption that $Y$ has domination number at least $3$.
\end{proof}

Every friends-and-strangers graph is bipartite since its vertices can be viewed as permutations in a symmetric group and performing a friendly swap corresponds to multiplying by a transposition (which changes the sign of the permutation). This tells us that every cycle $C$ in a friends-and-strangers graph has an even number of edges, so it makes sense to talk about pairs of opposite edges and pairs of opposite vertices of $C$. An \dfn{isometric cycle}
of a graph $G$ is a subgraph $H$ of $G$ that is a cycle and has the property that for all vertices $u$ and $v$ of $H$, the distance between $u$ and $v$ in $H$ is the same as the distance between $u$ and $v$ in $G$. The following proposition is an analogue of one that initially appears in \cite{naatz} for $\FS(\Path_n,Y)$.

\begin{proposition}\label{prop:opposite}
Let $Y$ be an $n$-vertex graph with domination number at least $3$. Let $C$ be an isometric cycle in $\FS(\Cycle_n,Y)$. Two edges in $C$ have the same edge label if and only if they are opposite edges of $C$.
\end{proposition}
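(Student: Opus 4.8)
The plan is to derive the statement from Propositions~\ref{prop1.6} and~\ref{prop1.5} together with the isometry hypothesis, with essentially no extra work beyond bookkeeping on the cycle $C$. Since $\FS(\Cycle_n,Y)$ is bipartite, $C$ has even length, say $2k$; I would write its vertices as $v_0,\dots,v_{2k-1}$ in cyclic order and its edges as $e_i=\{v_i,v_{i+1}\}$ with indices taken mod $2k$, so that $e_i$ and $e_j$ are opposite exactly when $j\equiv i+k$. The key observation is that every sub-path of $C$ using at most $k$ edges is a geodesic in $\FS(\Cycle_n,Y)$: such a sub-path extends within $C$ to a path with exactly $k$ edges joining two opposite vertices, those vertices are at distance $k$ in $C$ and hence at distance $k$ in $\FS(\Cycle_n,Y)$ because $C$ is isometric, so that length-$k$ path is a geodesic, and a sub-path of a geodesic is again a geodesic. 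By Proposition~\ref{prop1.6}, it follows that no two edges lying on a common sub-path of $C$ with at most $k$ edges can share an edge label.

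Granting this, I would first prove the ``only if'' direction. Suppose $e_i$ and $e_j$ are distinct edges of $C$ with $\psi(e_i)=\psi(e_j)$ that are not opposite. Deleting $e_i$ and $e_j$ splits $C$ into two arcs with $a$ and $b$ edges, where $a+b=2k-2$; since the edges are not opposite, $a\neq b$, so after relabeling we may assume $a\le k-2$. Then $e_i$, the $a$-edge arc between them, and $e_j$ together form a sub-path of $C$ with $a+2\le k$ edges containing both $e_i$ and $e_j$, contradicting the observation above. Hence edges with equal labels are opposite. For the ``if'' direction, given opposite edges $e_i$ and $e_{i+k}$, Proposition~\ref{prop1.5} supplies an edge $e'\neq e_i$ of $C$ with $\psi(e')=\psi(e_i)$; by the ``only if'' direction just proved, $e'$ is opposite to $e_i$, so $e'=e_{i+k}$ and thus $\psi(e_{i+k})=\psi(e_i)$.

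The only point requiring care is the observation in the first paragraph, and it is there that the isometry hypothesis enters: it is exactly what lets one promote the length-$k$ paths between opposite vertices of $C$ to geodesics of $\FS(\Cycle_n,Y)$, after which Proposition~\ref{prop1.6} does all the work. Everything else is routine arithmetic on the $2k$-cycle, including the small check that a sub-path with at most $k$ edges really does extend to a length-$k$ diameter of $C$.
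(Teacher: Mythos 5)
Your proof is correct and takes essentially the same approach as the paper: both use the isometry of $C$ to promote its half-length sub-paths to geodesics, invoke Proposition~\ref{prop1.6} to rule out repeated labels on such a sub-path, and then invoke Proposition~\ref{prop1.5} to produce the opposite edge with matching label. The paper phrases it slightly differently, by listing the $m$ length-$m$ paths of $C$ through a fixed edge $e$ and noting that the only edge they all omit is the opposite one, but the content is identical.
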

\begin{proof}
Let $2m$ be the number of edges in $C$. Let $e$ and $f$ be opposite edges in $C$. Proposition~\ref{prop1.5} tells us that there is some edge $e'\neq e$ in $C$ with the same edge label as $e$. Let $W_1,\ldots,W_m$ be the paths within $C$ that have $m$ edges and that use the edge $e$ (note that there are exactly $m$ such paths). Because $C$ is an isometric cycle of $\FS(\Cycle_n,Y)$, each path $W_i$ is a geodesic path in $\FS(\Cycle_n,Y)$. It follows from Proposition~\ref{prop1.6} that $e'$ is not an edge in any of the paths $W_1,\ldots,W_m$. However, the only edge in $C$ that is not in any of the paths $W_1,\ldots,W_m$ is $f$, so $e'=f$. This shows that $e$ and $f$ have the same edge label and that no other edges in $C$ have the same edge label as $e$. 
\end{proof}

\begin{lemma}\label{lemma1.8}
Let $Y$ be an $n$-vertex graph with domination number at least $3$. Let $C$ be an cycle of length at least 8 in $\FS(\Cycle_n,Y)$. Either there are at least two consecutive edges in $C$ that are contained in a common 4-cycle, or there are at least three consecutive edges in $C$ that are contained in a common 6-cycle. Moreover, if $Y$ is triangle-free, then there are at least two consecutive edges in $C$ that are contained in a common 4-cycle. 
\end{lemma}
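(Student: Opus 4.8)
The plan is to turn a cycle $C$ of length $\geq 8$ into a repetition-free anchored walk, apply Theorem~\ref{thm:isocycles}, and read off the conclusion from the structure of a complete anchored walk. First I would pick an edge $e$ of $C$ with label $ab$; traversing $C$ starting and ending at the endpoints of $e$ gives a closed walk whose label sequence begins and ends with $ab$. If the label $ab$ is the \emph{only} repeated label, then this closed walk is essentially a repetition-free anchored walk $W$ with anchors $ab$ (after possibly trimming at the first recurrence of some label, but since $C$ is a cycle of length $2m$ every edge label appears at most — in fact, by Proposition~\ref{prop1.5}, exactly — twice, and I would argue the relevant subsequence is repetition-free). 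Since $\{a,b\}\in E(Y)$ and $Y$ has domination number at least $3$, the set $\{a,b\}$ is not dominating, so there is a vertex $z$ adjacent to neither $a$ nor $b$; hence neither $az$ nor $bz$ appears in $W$, and Theorem~\ref{thm:isocycles} (via Lemma~\ref{lemma1}) says $W$ reduces to a \emph{trivial} anchored walk.

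The key point is then to extract local $4$- and $6$-cycle structure from the reduction. A reduction of a repetition-free $W$ to a trivial walk that never does a ``net'' square insertion must, at some stage, perform either a commutation move or a Yang--Baxter move directly on (a subsequence that descends from) consecutive edges of $C$ — more carefully, I want to find the \emph{first} Coxeter move in the reduction sequence that is not a commutation/Yang--Baxter move entirely internal to already-inserted material, i.e. the first move that acts on two or three consecutive labels coming from $C$ itself. A commutation move on two consecutive edges of $C$ means those two edges have disjoint labels, hence they lie in a common $4$-cycle of $\FS(\Cycle_n,Y)$ (the square obtained by performing the two commuting swaps in the other order). A Yang--Baxter move on three consecutive edges of $C$ with labels of the form $cd, ce, de$ means those three edges lie in a common $6$-cycle. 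A square deletion would require two consecutive \emph{equal} labels in $C$, impossible since $C$ is a cycle (a walk using an edge-subgraph cannot repeat an edge immediately and return). So the first ``structural'' move is a commutation or Yang--Baxter move, and it witnesses exactly the alternative in the statement. For the triangle-free refinement: a Yang--Baxter move on labels $cd,ce,de$ forces $\{c,d,e\}$ to span a triangle in $Y$ (each pair is an edge label, hence an edge of $Y$), which is impossible when $Y$ is triangle-free, so only commutation moves can occur, giving two consecutive edges in a common $4$-cycle.

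I still have to handle the case where $ab$ is \emph{not} the only repeated label of $C$. But by Proposition~\ref{prop1.5} every edge label on $C$ is repeated, and since $C$ has $2m \geq 8$ edges there is certainly \emph{some} pair of equal labels; among all repeated labels, I can choose one, say $cd$, and among the two edges carrying it, cut $C$ into two arcs. At least one arc has length $\geq 2$; if along some choice of anchor the intervening labels are all distinct I get a repetition-free anchored walk and run the argument above. The cleanest way to guarantee this: take a \emph{shortest} closed sub-walk of $C$ whose label sequence has its first label equal to its last and nothing else repeated — such a minimal configuration exists because $C$ itself is a closed walk with a repeated label, and minimality kills all other repetitions. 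Its anchor $pq$ satisfies $\{p,q\}\in E(Y)$, so again $\{p,q\}$ is not dominating and Theorem~\ref{thm:isocycles} applies. The main obstacle I anticipate is precisely this bookkeeping: making sure the anchored walk I feed into Theorem~\ref{thm:isocycles} is genuinely repetition-free, and then verifying that ``the first structural Coxeter move in the reduction'' really does act on consecutive edges descending from $C$ rather than getting tangled with inserted square-pairs — this needs a careful but routine induction on the reduction sequence, tracking which labels in the current sequence are ``original'' (inherited from $C$) versus ``inserted,'' and using that inserted labels always come in adjacent deletable pairs at the end.
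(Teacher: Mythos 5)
Your high-level instinct — that Theorem~\ref{thm:isocycles} encodes enough local structure to produce the $4$- and $6$-cycles — is reasonable, but the step you flag as ``careful but routine bookkeeping'' is exactly where the argument breaks, and I do not think it can be made routine. Here is the concrete problem: the reduction guaranteed by Theorem~\ref{thm:isocycles} is whatever sequence of Coxeter moves the proofs of Lemmas~\ref{lemma1} and~\ref{lemma4.9} happen to produce, and those proofs quite often \emph{begin} with a square insertion followed by a Yang--Baxter move that involves the freshly inserted label. Look at Case~3 of the proof of Lemma~\ref{lemma1} with $t=1$, $k\ge 1$: the sequence $ab,ax_1,by_1,x_1y_1,\dots$ is first turned into $ab,ax_1,bx_1,bx_1,by_1,x_1y_1,\dots$ by a square insertion, and the very next move is a Yang--Baxter move on $ab,ax_1,bx_1$, whose third label $bx_1$ is inserted. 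This move certifies a triangle $\{a,b,x_1\}$ in $Y$, but it does \emph{not} certify that three consecutive edges of $C$ have labels $ab,ax_1,bx_1$, because the third edge $e_3$ of $C$ is $by_1$, not $bx_1$. So the ``first structural move'' in the reduction need not act on three consecutive edges of $C$, and tracking original versus inserted labels does not obviously salvage this: once commutation and Yang--Baxter moves start mixing inserted labels with original ones, two original labels that become adjacent in the current sequence need not have been adjacent in $C$. You would have to re-engineer the reduction itself, at which point you would essentially be reproving the lemma directly.

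The paper's actual proof is much shorter and completely bypasses Theorem~\ref{thm:isocycles}. It assumes (for contradiction with the $4$-cycle alternative) that every two consecutive edge labels of $C$ share exactly one vertex of $Y$, lets $a$ be the vertex common to $\psi(e_1)$ and $\psi(e_2)$, and observes that if $a$ appeared in \emph{every} $\psi(e_i)$ then $a$ would slide all the way around $\Cycle_n$ and swap with every other vertex, forcing $\{a\}$ to be a dominating set. Since the domination number is at least $3$, there is a smallest $r\ge 3$ with $a\notin\psi(e_r)$; writing $\psi(e_{r-2})=ab$, $\psi(e_{r-1})=ac$, the adjacency structure on $\Cycle_n$ forces $\psi(e_r)=bc$, so $e_{r-2},e_{r-1},e_r$ is a braid triple lying in a $6$-cycle, and $\{a,b,c\}$ is a triangle in $Y$ (which also yields the triangle-free refinement). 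Nothing about anchored walks, reductions, or Proposition~\ref{prop1.5} is needed. Two further small points about your write-up: Proposition~\ref{prop1.5} only gives that each edge label on $C$ appears \emph{at least} twice, not exactly twice (``exactly twice'' is Proposition~\ref{prop:opposite} and requires $C$ isometric); and the fact that $\{a,b\}\in E(Y)$ is used where you only need that a singleton $\{a\}$ is not dominating, which is a weaker and more robust fact. I would recommend replacing the Coxeter-move route with the direct tracking-of-the-common-vertex argument.
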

\begin{proof}
Our proof follows that of \cite[Lemma~4.8]{naatz}. If there are two consecutive edges in $C$ that have disjoint edge labels, then they must belong to a $4$-cycle. Now assume that the edge labels of any two consecutive edges in $C$ have exactly one vertex of $Y$ in common. We will show that $Y$ contains a triangle and that there are at least three consecutive edges in $C$ that are contained in a common 6-cycle. 

Let $e_1,\ldots,e_{k}$ be the edges in $C$, listed in clockwise order. Let $a\in V(Y)$ be the vertex that appears in the edge labels $\psi(e_1)$ and $\psi(e_2)$. Because $\{a\}$ is not a dominating set of $Y$, there must be some $1\leq i\leq k$ such that $a\not\in\psi(e_i)$. (Indeed, if we imagine the vertices of $Y$ sitting on those of $\Cycle_n$, then having $a\in\psi(e_i)$ for all $1\leq i\leq k$ would mean that the vertex $a$ slides all the way around $\Cycle_n$, participating in friendly swaps with all other vertices of $Y$.) Let $r$ be the smallest positive integer such that $a\not\in\psi(e_r)$. By definition of $a$, we have $r\geq 3$. Let $\psi(e_{r-2})=ab$ and $\psi(e_{r-1})=ac$. Let $\sigma$ be the vertex of $\FS(\Cycle_n,Y)$ in both the edges $e_{r-1}$ and $e_r$. Then the two vertices of $\Cycle_n$ adjacent to $\sigma^{-1}(c)$ are $\sigma^{-1}(a)$ and $\sigma^{-1}(b)$. This means that we must have $\psi(e_r)=bc$. Thus, the vertices $a,b,c$ form a triangle in $Y$. Consider the walk in $\FS(\Cycle_n,Y)$ that uses the edges $e_{r-2},e_{r-1},e_r$. We can perform a Yang--Baxter move on this walk to obtain a new walk with three different edges $e_{r-2}',e_{r-1}',e_r'$. The edges $e_{r-2},e_{r-1},e_{r},e_{r}',e_{r-1}',e_{r-2}'$ form a $6$-cycle in $\FS(\Cycle_n,Y)$. 
\end{proof}

Finally, we can prove Theorem~\ref{thm:isometric} by piecing together the results in this section using the same idea as in \cite{naatz}.

\begin{proof}[Proof of Theorem~\ref{thm:isometric}]
Suppose $Y$ has domination number at least $3$. We will prove the theorem when $Y$ is not triangle-free; the proof when $Y$ is triangle-free is completely analogous (we simply invoke the second statement in Lemma~\ref{lemma1.8} instead of the first statement). Let $C$ be a cycle in $\FS(\Cycle_n,Y)$. We will show that $C$ can be written as a symmetric difference of $4$-cycles and $6$-cycles. Since friends-and-strangers graphs are bipartite, $C$ has an even number of edges, say $2m$. If $m\in\{2,3\}$, then we are done. Therefore, we may assume $m\geq 4$ and proceed by induction on $m$. By induction, we just need to show that $C$ can be written as a symmetric difference of cycles of length strictly smaller than $2m$.

It is straightforward to see that a non-isometric cycle in a graph can be written as a symmetric difference of strictly smaller cycles. Hence, we may assume $C$ is isometric. Let $e_1,\ldots,e_{2m}$ be the edges of $C$, listed in clockwise order. If there are two consecutive edges of $C$ contained in a common $4$-cycle, then let $k=2$; otherwise, let $k=3$. According to Lemma~\ref{lemma1.8}, there are at least $k$ consecutive edges in $C$ that belong to a common $2k$-cycle; since $C$ is isometric, there must be exactly $k$ edges in $C$ in this $2k$-cycle. Without loss of generality, say these edges are $e_1,\ldots,e_k$, and let $e_1',\ldots,e_k'$ be the other edges of the common $2k$-cycle $D$. Assume that these edges are named so that $e_1,\ldots,e_k,e_k',\ldots,e_1'$ is the cyclic order that these edges appear around $D$. Our choice of $k$ guarantees that $D$ is an isometric cycle of $\FS(\Cycle_n,Y)$. Therefore, Proposition~\ref{prop:opposite} tells us that $\psi(e_i)=\psi(e_{k+1-i}')$ for all $1\leq i\leq k$. Since $C$ has length at least $8$, we can use Proposition~\ref{prop:opposite} again to see that $e_1',\ldots,e_k'$ are not edges in $C$. Let $C'$ be the cycle with edges $e_1',\ldots,e_k',e_{k+1},\ldots,e_{2m}$. Then $C=C'\triangle D$, so it suffices to show that $C'$ can be written as a symmetric difference of cycles of length strictly smaller than $2m$. Since $C'$ has $2m$ edges, we just need to prove that $C'$ is not isometric. However, this follows from Proposition~\ref{prop:opposite} because $\psi(e_k')=\psi(e_1)=\psi(e_{m+1})$ and the edges $e_k'$ and $e_{m+1}$ are not opposite edges of $C'$.
\end{proof}

\section*{Acknowledgements}

We are grateful to the MIT PRIMES organizers for creating such a rare and amazing math research opportunity.
Colin Defant was supported by the National Science Foundation under Award No.\ DGE--1656466 and Award No.\ 2201907, by a Fannie and John Hertz Foundation Fellowship, and by a Benjamin Peirce Fellowship at Harvard University.


\begin{thebibliography}{99}

\bibitem{AlonDefantKravitz}
N. Alon, C. Defant, and N. Kravitz, Typical and extremal aspects of friends-and-strangers graphs. To appear in \emph{J. Combin. Theory Ser. B}, (2022).

\bibitem{Balitskiy}
A. Balitskiy and J. Wellman, Flip cycles in plabic graphs. \emph{Selecta Math.}, {\bf 26} (2020). 

\bibitem{Bangachev}
K. Bangachev, On the asymmetric generalizations of two extremal
questions on friends-and-strangers graphs. \emph{European J. Combin.}, {\bf 104} (2022). 

\bibitem{ToricPromotion} 
C. Defant, Toric promotion. To appear in \emph{Proc. Amer. Math. Soc.}, (2022).

\bibitem{friends} 
C. Defant and N. Kravitz, Friends and strangers walking on graphs. \emph{Combinatorial Theory}, {\bf 1} (2021).

\bibitem{Felsner}
S. Felsner, L. Kleist, T. M\"utze, and L. Sering. Rainbow cycles in flip graphs. \emph{SIAM J. Discrete Math.}, {\bf 34} (2020), 1--39. 

\bibitem{Jeong}
R. Jeong, Diameters of connected components of friends-and-strangers
graphs are not polynomially bounded. arXiv:2201.00665 (2022).

\bibitem{JeongStructural}
R. Jeong, On structural aspects of friends-and-strangers graphs. arXiv:2203.10337 (2022).

\bibitem{naatz}
M. Naatz, The graph of linear extensions revisited. \emph{SIAM J. Discrete Math.}, {\bf 13} (2000), 354--369.

\bibitem{stanley} 
R. P. Stanley, An equivalence relation on the symmetric group and multiplicity-free flag $h $-vectors.  \emph{J. Comb.}, \textbf{3} (2012), 277--298.

\bibitem{Wang}
L. Wang and Y. Chen, Connectivity of friends-and-strangers graphs on random pairs. arXiv:2208.0080.

\bibitem{wilson} 
R. M. Wilson, Graph puzzles, homotopy, and the alternating group.  \emph{J. Combin. Theory, Ser. B}, \textbf{16} (1974), 86--96.


\end{thebibliography}
\end{document}